\theoremstyle{plain} 
\newtheorem{theorem}{Теорема}
\newtheorem*{thAa}{Теорема A1}
\newtheorem*{thAb}{Теорема A2} 
\newtheorem*{thAc}{Теорема A3}
\newtheorem*{Thopi}{Теорема о повторных интегралах}
\newtheorem{lemma}{Лемма}[section]
\newtheorem{propos}{Предложение}[section] 
\newtheorem{corollary}{Следствие}[section]
\theoremstyle{definition}
\newtheorem{definition}{Определение}[section] 
\newtheorem{remark}{Замечание}[section] 
\newtheorem{example}{Пример}[section]
\theoremstyle{plain} 
\newtoks\thehProclaim 
\newtheorem*{Proclaim}{\the\thehProclaim}
\theoremstyle{definition} 
\newtoks{\thehRemark} \newtheorem*{Remark}{\the\thehRemark}
\renewcommand{\leq}{\leqslant} 
\renewcommand{\geq}{\geqslant}
\newcommand{\mc}{\mathcal}
\newcommand{\rad}{\text{\tiny\rm rad}}
\newcommand{\bal}{\rm {bal}}
\newcommand{\RR}{\mathbb{R}} 
\newcommand{\CC}{\mathbb{C}} 
\newcommand{\NN}{\mathbb{N}} 
\newcommand{\ZZ}{\mathbb{Z}}
\newcommand{\DD}{\mathbb{D}} 
\newcommand{\e}{\varepsilon}
\newcommand{\const}{{\rm const}}
\DeclareMathOperator{\clos}{clos} 
\DeclareMathOperator{\Har}{har} 
\DeclareMathOperator{\Hol}{Hol} 
\DeclareMathOperator{\Poi}{P} 
\DeclareMathOperator{\Arg}{Arg}
\DeclareMathOperator{\var}{var} 
\DeclareMathOperator{\Exp}{Exp} 
\DeclareMathOperator{\Zero}{Zero} 
\DeclareMathOperator{\sbh}{sbh}
\DeclareMathOperator{\supp}{supp} 
\DeclareMathOperator{\type}{type} 
\DeclareMathOperator{\ord}{ord}
\DeclareMathOperator{\up}{\text{\rm \tiny up}}
 \DeclareMathOperator{\lw}{\text{\rm \tiny low}}
\DeclareMathOperator{\loc}{loc}
\DeclareMathOperator{\dd}{\,{\mathrm d\!}}
\renewcommand{\Re}{{\rm Re \,}}
\renewcommand{\Im}{{\rm Im \,}}
\begin{document} 
\title[Выметание на систему лучей]{Выметание мер и
субгармонических функций на систему лучей. I. Классический случай}
	
	\author{Б.\,Н.~Хабибуллин, А.\,В.~Хасанова} 
	
\address{факультет математики и ИТ\\ Башкирский государственный университет\\ 450074, г. Уфа\\ ул. Заки Валиди, 32\\ Башкортостан\\
Россия}
	
	\email{Khabib-Bulat@mail.ru}
	
	\subjclass[2010]{Primary 30D15; Secondary 30D35, 41A30, 31A05}
	
\keywords{целая функция, последовательность нулей, субгармоническая функция, мера Рисса, выметание}	
\begin{abstract} Развиваются классические выметания мер и субгармонических функций на систему лучей $S$ с общим началом на комплексной плоскости $\CC$. Это позволяет для субгармонической функции $v$ на $\CC$ строить также субгармоническую на $\CC$ функцию, гармоническую вне $S$ и совпадающую с $v$ на $S$.  Приводятся применения к исследованию   взаимосвязи роста целой функции на $S$ с распределением её  нулей, условий вполне регулярного роста целых и субгармонических функций на системе лучей, к вопросам неполноты экспоненциальных  систем в пространствах голоморфных функций в невыпуклых неограниченных открытых множествах, сужающихся вблизи бесконечности. 
Настоящая первая часть работы содержит и необходимый подготовительный материал для построения нового типа выметания конечного рода  на $S$ во второй части.
	
Библиография: 35 названий 
\end{abstract}
	
\thanks{Работа поддержана Российским фондом фундаментальных исследований, проект №~16--01--00024a}	
\date{18 августа 2018 г.}
	
\maketitle

\tableofcontents

\section{Введение}\label{s10}
\subsection{Истоки и предмет исследования}\label{111}

В контрасте со многими классическими математическими результатами и методами,  авторство которых часто сложно  установить, можно с уверенностью сказать, что первооткрывателем методов  выметания\footnote{balayage или sweeping (out) method} меры
и потенциала на границу области, или из области, был Анри Пуанкаре (конец XIX века \cite{P1}--\cite[гл.~VII]{P2}). В первой половине XX века его метод классического выметания был существенно развит Шарлем Валле-Пуссеном \cite{V-P}. Дальнейший  список
авторов и работ, причастных к развитию техники выметания, можно частично извлечь из монографий  \cite{L} и  \cite{BH}. В последней книге наибольшее внимание уделено абстрактной форме выметания. Несколько иная форма абстрактного выметания разрабатывалась и широко использовалась нами в  \cite{Kha01}--\cite{Khab011} применительно к конкретным задачам теории  функций. Выметание на замкнутую систему лучей с общим началом на комплексной плоскости $\CC$ рассматривалось и использовалось  первым из соавторов в  \cite{Kha88}--\cite{Khsur}. Специфика таких систем лучей $S$, рассматриваемых и как точечные подмножества в $\CC$, --- некомпактность и неограниченность $S$ в $\CC$; бесконечносвязное, в общем случае, дополнение $\CC\setminus S$; возможный быстрый рост вблизи бесконечности характеристик выметаемых на систему лучей мер и субгармонических функций. На основе выметания на систему лучей в \cite{Kha88}--\cite{Khsur} был получен ряд новых результатов: {\it критерий вполне регулярного роста целой и/или субгармонической функции на системе лучей\/} в терминах распределения её нулей/масс; полный перенос теоремы Рубела\,--\,Мальявена \cite{MR}--\cite[22]{RC}  о минимально возможной ширине индикаторной диаграммы целой функции экспоненциального типа с {\it положительными нулями\/} на функции с {\it произвольными комплексными нулями\/} с существенными усилениями и обобщениями;
 геометрический {\it критерий полноты экспоненциальной системы\/} в пространстве голоморфных функций {\it в  выпуклой неограниченной области\/} в естественных топологиях и многие другие. Для недоступных классическому выметанию ситуаций использовалось и новое на тот период глобальное выметание рода $1$ \cite[\S~3]{kh91AA}, \cite[гл.~II]{KhDD92}, когда выметание меры даёт заряд, т.\,е.
вещественнозначную меру, а выметание субгармонической функции --- уже разность субгармонических функций, или $\delta$-субгармоническая функция. В настоящей первой части работы мы возвращаемся к классическому выметанию на системы лучей с общей начальной точкой. При этом предшествующие результаты существенно совершенствуется, особенно в количественном аспекте, на основе точных элементарных верхних и нижних оценок гармонической меры для полуплоскости (угла) из пунктов \ref{p_3_1}, \ref{s:dal}. При выметании меры и субгармонической функции рассматриваем не только меры и функции конечного порядка роста, но и меры и функции бесконечного порядка роста. В настоящей первой части
работы преследуются две цели: 1) максимально реализовать возможности классического выметания на систему лучей, когда выметание положительной меры и субгармонической функции также соответственно положительная мера и субгармоническая функция; 2) подготовить технику и методы для развития упоминавшегося выше глобального выметания рода $1$ до выметания конечного рода $q=0,1, 2, \dots$,
применимого к любым мерам и субгармоническим функциям конечного порядка роста. Очень сжатое описание схемы построения выметания любого конечного рода было приведено в \cite[заключительная часть \S~2.1, стр.~93--95]{KhDD92}. При этом классическое выметание оказывается выметанием рода $0$. Кроме того, имея ввиду вторую цель, некоторые понятия и вспомогательные результаты даны здесь в форме более
общей, чем необходимо для первой части работы. В следующем п.~\ref{1_2b} мы даём сводку новых и полученных одним из авторов ранее основных результатов о классическом выметании на произвольную систему лучей в модельном случае системы из двух противоположно направленных лучей. Из сводки видны как связи наших результатов с классическими понятиями теории целых функций \cite{RC}--\cite{GO}, так и возможные применения выметания на систему лучей к теории роста целых функций и к аппроксимирующим свойствам экспоненциальных  систем в пространствах  голоморфных функций. Используется терминология, обозначения и понятия из \cite{KhDD92}--\cite{GM}. 

Авторы глубоко признательны рецензенту за ряд ценных замечаний, поправок и уточнений.

\subsection{Выметание на вещественную ось $\RR$}\label{1_2b} 
Классического выметание на произвольную систему лучей с началом в нуле иллюстрирует достаточно простой в формулировках, но уже вполне содержательный модельный случай выметания на систему $S=\{\RR^+, \RR^-\}$ лишь из двух лучей $\RR^+:=\{x\in \RR\colon x\geq 0\}$ и $\RR^-:=-\RR^-:=\{x\in \RR\colon x\leq 0\}$ --- замкнутые соответственно положительная и отрицательная полуоси. Иначе говоря, $S=\RR$, если одновременно рассматривать систему лучей $\{\RR^+, \RR^-\}$ и как точечное подмножество $\RR^-\cup \RR^+=\RR\subset \CC$.
 
\subsubsection{Выметание последовательности точек}\label{b_seq} Всюду в этом п.~\ref{1_2b} последовательность точек ${\sf Z}=\{{\sf z}_k\}_{k=1,2,\dots}$ из $\CC$ не имеет точек сгущения в $\CC$. 
Последовательности точек $\sf Z$ сопоставляем считающую меру $n_{\sf Z}$, равную на каждом подмножестве $S\subset \CC$ числу точек из $\sf Z$, содержащихся в $S$. Для интервала $I\subset \RR$ через $\omega\bigl( {\sf z}, I \bigr)\geq 0$ обозначаем делённый на $\pi$ угол, под которым виден интервал $I$ из точки ${\sf z}\in \CC$. Меру $n_{\sf Z}^{\bal}$ на $\CC$ с носителем $\supp n_{\sf Z}^{\bal}\subset \RR$ и определяющей её возрастающей функцией распределения 
\begin{equation}\label{nuRA0} {(n_{\sf Z}^{\bal})}^{\RR}(x):=\begin{cases}
\sum\limits_{k}\omega\bigl( {\sf z}_k, [0,x] \bigr) \; &\text{ при } x\geq 0,
\\ -\sum\limits_{k}\omega\bigl( {\sf z}_k, [x,0) \bigr)
\; &\text{ при } x<0 , 
\end{cases} 
\end{equation} 
называем {\it выметанием\/} меры $n_{\sf Z}$ или последовательности $\sf Z$ {\it
на\/} $\RR$. Для ${(n_{\sf Z}^{\bal})}^{\RR}(x)$ в \eqref{nuRA0} допускаются и значения $\pm \infty$.

Для меры $\nu$ на $\CC$ при $r\in \RR^+$ через $\nu^{\rad}(r)$ обозначаем $\nu$-меру замкнутого круга с центром в нуле радиуса $r$.
\begin{thAa}[{\rm см. \cite{Kha91} и теоремы \ref{th:Sb}--\ref{th:Lc} ниже}]\label{thA}
Попарно эквивалентны следующие три утверждения: 
\begin{enumerate}[{\rm (i)}] 
\item\label{bR1} 
выметание $\sf Z$ на $\RR$ --- положительная борелевская мера, т.\,е. функция распределения из \eqref{nuRA0} конечна при любом $x\in \RR$;  

\item\label{bR2} значение функции из \eqref{nuRA0} конечно при некотором $x\neq 0$; 

\item\label{bR3} для верхней и
нижней полуплоскостей выполнено условие Бляшке \begin{equation}\label{Adz0} \sum_{{\sf z}_k\neq 0} \Bigl|\Im \frac{1}{{\sf
z}_k}\Bigr|<+\infty. 
\end{equation} 

\end{enumerate}

В условиях \/ \eqref{bR1}--\eqref{bR3} функция распределения из \eqref{nuRA0} удовлетворяет ус\-л\-о\-в\-ию Липшица на каждом
отрезке $[x_1,x_2]\subset \RR\setminus \{0\}$, не содержащем точек последовательности ${\sf Z}$, и \begin{enumerate} \item[{\rm
(iv)}]\label{bR4} если для $g\colon \RR^+\to \RR^+$ имеем $g(r)>r$ при всех $r>0$ , то\footnote{Здесь и далее
ссылки над знаками бинарных отношений означают, что эти соотношения как-то связаны с приведёнными ссылками, например, вытекают из них.} 
\begin{multline*}
{(n_{\sf Z}^{\bal})}^{\RR}(r)-{(n_{\sf Z}^{\bal})}^{\RR}(-r)
\overset{\eqref{nuRA0}}{=} (n_{\sf Z}^{\bal})^{\rad}(r) \\ \leq (n_{\sf Z})^{\rad}\bigl(g(r)\bigr)+\frac{2rg^2(r)}{\pi (g(r)-r)^2}
\sum_{|{\sf z}_k|\geq g(r)} \Bigl|\Im \frac{1}{{\sf z}_k}\Bigr| \quad \text{при любых $r>0$}.
\end{multline*} 
\end{enumerate}

Пусть, в дополнение к \eqref{bR1}--\eqref{bR3}, $\sf Z$ --- последовательность конечной верхней плотности при порядке\/
$p\in \RR^+$, т.\,е.\/ $(n_{\sf Z})^{\rad}(r)=O(r^p)$ при $0\leq r\to +\infty$. Тогда 
\begin{enumerate}
 \item[{\rm (v)}]\label{bR5}
выметание\/ $n_{\sf Z}^{\bal}$ меры $n_{\sf Z}$ --- мера конечной верхней плотности при порядке\/ $p$, т.\,е.\/ $(n_{\sf
Z}^{\bal})^{\rad}(r)=O(r^p)$ при $r\to +\infty$, а при $p\geq 1$ имеет место и точная оценка \begin{equation*}
\limsup_{0\leq r\to+\infty}\frac{(n_{\sf Z}^{\bal})^{\rad}(r)}{r^p} \leq \limsup_{0\leq r\to+\infty}\frac{(n_{\sf
Z})^{\rad}(r)}{r^p}\,; 
\end{equation*} 
\item[{\rm (vi)}]\label{bR6} если $p$ --- натуральное число и для $\sf Z$ выполнено условие
Линделёфа 
\begin{equation*}
\biggl|\sum_{1\leq |{\sf z}_k|<r} \frac{1}{{\sf z}_k^p}\biggr|=O(1)\quad \text{при $r\to
+\infty$,}
\end{equation*} 
то оно выполнено и для  выметания $n_{\sf Z}^{\bal}$, а именно: при $r\to +\infty$
\begin{equation*}
\biggl|\,\int\limits_{1\leq |z|<r} \frac{1}{z^p} \dd n_{\sf Z}^{\bal}(z)\biggr| 
\overset{\eqref{nuRA0}}{=} \left|\biggl(\,\int^{-1}_{-r} +\int_{1}^r \,\biggr)\, \frac{\dd \,(n_{\sf
Z}^{\bal})^{\RR}(x)}{x^p} \right| =O(1); 
\end{equation*} 
\item[{\rm (vii)}]\label{bR7} если
последовательность $\sf Z$ асимптотически отделена от\/ $\RR$, что означает $\liminf\limits_{k\to \infty} {|\Im {\sf z}_k|}/{|{\sf z}_k|}>0$, то для некоторых чисел $b, r_0>0$ при любых $-\infty<t_1<t_2<+\infty$, $t_1t_2>0$, из $|t_1+t_2|\geq r_0$ и $|t_2-t_1|\leq {|t_1+t_2|}/{2}$ следует неравенство 
\begin{equation*}
\bigl|(\nu^{\bal})^{\RR}(t_2)-(\nu^{\bal})^{\RR}(t_1)\bigr| \leq
b|t_2-t_1| |t_1+t_2|^{p-1} . 
\end{equation*} 
\end{enumerate}
\end{thAa} 

\subsubsection{Выметание функции}\label{bfR} В этом пп.~\ref{bfR} всюду $f\colon \CC\to \CC$ --- ненулевая целая функция с
последовательностью нулей $\Zero_f=:{\sf Z}$, перенумерованной с учетом кратности этих нулей, и $v:=\log |f|$. Функция $f$ принадлежит
классу A \cite{Levin56}, или классу Ахиезера, если последовательность $\sf Z$ удовлетворяет условию Бляшке \eqref{Adz0}. Нетрудно 
показать (см. предложение \ref{LKcr} ниже), что {\it функция $f$ принадлежит классу\/ {\rm A,} если и только если} 

\begin{equation*}\label{Akhi} \hspace{-2mm}\sup_{r\geq 1}\left(\int_{1}^r\Bigl(\,\frac{1}{x^2}-\frac{1}{r^2}\Bigr)\log\bigl|f(-x)f(x)\bigr| \dd x
+\frac{1}{r}\int_0^{2\pi}\log \bigl|f(re^{i\theta})\bigr| |\sin \theta| \dd \theta\right)<+\infty, 
\end{equation*} что {\it для целой  функции $f$ экспоненциального типа,\/} т.\,е. при выполнении  условия $\limsup\limits_{z\to\infty} \frac{\log |
f(z)|}{|z|}<+\infty,$ эквивалентно любому из следующих двух условий \cite[гл.~V, \S~1, теоремы 2, 3]{Levin56}
\begin{equation*}\label{Akhi1} 
\sup_{r\geq 1}\int_{1}^r\frac{\log\bigl|f(-x)f(x)\bigr|}{x^2} \dd x <+\infty, \qquad
\sup_{r\geq 1}\, \left|\,\int_{1}^r\frac{\log\bigl|f(-x)f(x)\bigr|}{x^2} \dd x\right| <+\infty. 
\end{equation*} Любая целая функция $f$ порядка $<1$, т.\,е. при $\limsup\limits_{z\to\infty} \frac{\log \log (1+| f(z)|)}{\log |z|}<1,$ принадлежит классу A.
 
{\it Выметание на $\RR$ субгармонической на\/ $\CC$ функции $v=\log |f|$\/} определяем как субгармоническую функцию $v^{\bal}$ на $\CC$, гармоническую на $\CC \setminus \RR$, сужение которой $v^{\bal}\bigm|_{\RR}$ на $\RR$ совпадает с сужением $\log|f|\bigm|_{\RR}$ на $\RR$. Оно, конечно же, не единственно. Содержание теорем \ref{th:bs}--\ref{thmA} иллюстрирует 

\begin{thAb}\label{bal:Af} 
Пусть $f$ --- целая функция класса\/ {\rm A}. Тогда 
\begin{enumerate}[{\rm (i)}] 
	\item\label{fA1} существует выметание  $v^{\bal}$ функции $v=\log |f|$ на\/ $\RR$,
для которого функция распределения на $\RR$ меры Рисса $\nu_{v^{\bal}}$ 
функции $v^{\bal}$ 
с носителем $\supp \nu_{v^{\bal}}\subset \RR$ совпадает с $(n_{\sf Z}^{\bal})^{\RR}$ из \eqref{nuRA0}; 
\item\label{fA2} для целой функции $f$ конечного типа при порядке $p\in \RR^+$, т.\,е. при  $\limsup\limits_{z\to \infty} \frac{\log |f(z)|}{|z|^{p}}<+\infty,$
 выметание $v^{\bal}$ --- субгармоническая функция конечного типа при том же порядке $p$, т.\,е. $\limsup\limits_{z\to \infty} \frac{v^{\bal}(z)}{|z|^p}<+\infty.$ 
\end{enumerate}

Целая функция $f$ класса\/ {\rm A} конечного типа при порядке $p\in \RR^+$ имеет вполне регулярный рост на $\RR$ при порядке $p$, т.\,е. для некоторого $E\subset \RR$ существуют два конечных предела 
\begin{equation}\label{Enl} 
\lim_{\substack{x\to \pm \infty\\x\notin E\subset \RR}} \frac{\log |f(x)|}{|x|^p}\, , \quad \text{где}\quad \lim_{0\leq r\to +\infty}\frac{1}{r}\int\limits_{E\cap [-r,r]} \dd x=0, 
\end{equation} 
если и только если 
\begin{enumerate} 
\item[{\rm (iii)}]\label{fA3} в случае $p<1$ существуют два конечных предела $\lim\limits_{x\to \pm \infty}\frac{{(n_{\sf
Z}^{\bal})}^{\RR}(x)}{|x|^p}$; 
\item[{\rm (iv)}]\label{fA4} в случае $p\geq 1$ в обозначениях $[p]$ и $\{p\}$ соответственно для целой и дробной частей числа $p$ существуют два конечных предела 
\begin{equation}\label{Enln}
 \lim_{\substack{x\to \pm \infty\\x\notin E\subset \RR}}|x|^{1-\{p\}} \fint\limits_{\RR\setminus [-1,1]}\frac{1}{x-t}\frac{(n_{\sf Z}^{\bal})^{\RR}(t)}{t^{[p]+1}} \dd t,
\end{equation} 
где $E$ --- некоторое множество нулевой относительной линейной меры на $\RR$, т.\,е. такое же, как в \eqref{Enl}, а
перечёркнутый интеграл $\fint$ задаёт главное значение интеграла в смысле Коши в точке $x$. 
\end{enumerate}
Пусть целая функция $f$ по-прежнему принадлежит классу\/ {\rm A} и 
\begin{equation}\label{fsr} 
\limsup_{0<r\to +\infty} \frac{1}{2\pi
r}\int_0^{2\pi} \log \bigl|f(re^{i\theta})\bigr| \dd \theta \leq b \quad\text{для некоторого числа $b\geq 0$}; 
\end{equation} 
в частности,
условие \eqref{fsr} выполнено для любой целой функции $f$ экспоненциального типа. Тогда 
\begin{enumerate} 
	\item[{\rm (v)}]\label{fA5} для любого числа $\e\in (0,1]$ найдётся ненулевая целая функция,  
для которой $F:=fg$ --- целая функция экспоненциального типа с индикатором роста
$h_{F}(\theta):=\limsup\limits_{0<r\to +\infty}\frac{\log |F(re^{i\theta})|}{r}$, $\theta \in [0,2\pi]$, при порядке $1$, удовлетворяющим оценкам 
\begin{equation}\label{hF}
 h_F(0)+h_F(\pi)\leq 10^3b\e, \quad h_F(-\pi/2)+h_F(\pi/2)\leq 10^3b\,\frac{1}{\e} \,.
\end{equation} 
\end{enumerate} 
\end{thAb} 

\subsubsection{О неполноте экспоненциальных систем}\label{sec:inc_s} 

Последовательности $\sf Z$ сопоставим экспоненциальную систему 
\begin{equation*} \Exp^{i\sf Z}:=\Bigl\{ z^m\exp(i{\sf
z}_kz)\colon 0\leq m\leq n_{\sf Z}\bigl(\{{\sf z}_k\}\bigr)-1\Bigr\}. 
\end{equation*} 
\begin{thAc}[{см. и ср. с \cite[следствие
3]{Kha01l}}] Пусть открытое множество  $\mathcal O\subset \CC$ содержит луч $l$, параллельный   $\RR$, и вместе с каждой точкой $x+iy_0\in l$ содержит интервал $\bigl\{x+iy\colon -y_-(x)<y-y_0<y_+(x)\bigr\}$, параллельный мнимой оси $i\RR$, а также  $|x|y_-(x)\to +\infty$ или $|x|y_+(x)\to +\infty$ при $x\to +\infty$ или при $x\to -\infty$.
Пусть  $\sf Z\subset \CC$  --- последовательность конечной верхней плотности при порядке $1$. 
Если $\sf Z$ удовлетворяет условию Бляшке \eqref{Adz0},  то система $\Exp^{i\sf Z}$ не полна в пространстве $\Hol(\mathcal O)$ голоморфных функций в $\mathcal O$ в топологии равномерной сходимости на компактах. 
\end{thAc}

\subsection{Основные обозначения, определения и соглашения}\label{s1.1}  К этому подразделу~\ref{s1.1} можно обращаться по мере необходимости.
\subsubsection{Множества, порядок, топология}\label{1_1_1} Как обычно, $\NN=\{1,2,\dots\}$ и $\ZZ$ --- множества {\it натуральных\/} и {\it целых чисел,\/} $\NN_0:=\{0\}\cup \NN$;  $\CC_{\infty}:=\CC\cup\{\infty\}$ --- {\it расширенная комплексная плоскость,\/} или компактификация  Александрова комплексной плоскости $\CC$; $\CC^{\up}:= \{z\in \CC \colon \Im z>0\}$ и $\CC_{\lw}:=-\CC^{\up}=\{z\in \CC\colon -z\in \CC^{\up}\}$ соответственно {\it верхняя и нижняя полуплоскости,\/} а  $ \CC^{\overline \up}:=\CC^{\up}\cup \RR$ и $\CC_{\overline \lw}:=\CC_{\lw}\cup \RR$ --- их замыкания  в $\CC$;  $\RR_{-\infty}:=\{-\infty\}\cup \RR$, $\RR_{+\infty}:=\RR\cup \{+\infty\}$;  $\RR_{\pm\infty}:=\RR_{-\infty}\cup\RR_{+\infty}$ --- {\it расширенная
вещественная прямая\/} с естественными отношением порядка $-\infty \leq x\leq +\infty$ для любого $x\in \RR_{\pm\infty}$ и порядковой
топологией, но $\RR_{\infty}=\RR\cup \{\infty\}\subset \CC_{\infty}$ --- {\it замыкание $\RR$ в $\CC_{\infty}$}. 
На  $S\subset \CC_{\infty}$ индуцируется топология с $\CC_{\infty}$, но при рассмотрении
$S\subset \RR_{\pm\infty}$ как подмножества в $\RR_{\pm\infty}$ уже с $\RR_{\pm\infty}$. Для подмножеств $S\subset \CC_{\infty}$ часто используем обозначение 
$S_*:=S\setminus \{0\}$--- {\it<<проколотое>> в нуле\/} $S$; в частности, $\RR_*=\RR\setminus \{0\}$, $\RR_*^+=\RR^+\setminus \{0\}$.  Для $z\in \CC$ и $S\subset \CC$ через $\bar z$ обозначаем сопряжённое комплексное число. Одним и тем же символом $0$ обозначаем, по контексту, число нуль, нулевой вектор, нулевую функцию, нулевую меру (заряд) и т.\,п.; $\varnothing$ --- {\it пустое множество.\/} Для подмножества $X$ упорядоченного векторного пространства чисел, функций, мер и т.\,п. с
отношением порядка \;$\geq$\; полагаем $X^+:=\{x\in X\colon x \geq 0\}$ --- все положительные элементы из $X$; $x^+:=\max \{0,x\}$.
{\it Положительность\/} понимается, в соответствии с контекстом, как $\geq 0$. Для $S\subset \CC_{\infty}$ через $\clos S$ и $\partial S$ обозначем соответственно {\it замыкание\/} и {\it границу\/} подмножества $S$ в $\CC_{\infty}$. 

Для $r\in \RR_{\pm\infty}$ и $z\in \CC$ полагаем $ D(z,r):=\{z' \in \CC \colon |z'-z|<r\}$ --- {\it открытый круг с центром\/ $z$
радиуса\/ $r$.} Так, $D(z,r)=\varnothing$ при $r\leq 0$ и $D(z,+\infty):= \CC$. В частном случае $D(r):=D(0,r)$; $\DD:=D(1)$ --- {\it
единичный круг.\/} Для $z=\infty$ удобно принять $D(\infty,r):=\{\infty\}\cup \{z\in \CC \colon |z|>1/r\}$ и $D(\infty, +\infty):=\CC_{\infty}\setminus \{0\}$. Кроме того, $\overline{D}(z,r):=\clos D(z,r)$ --- {\it замкнутый круг;} $\overline{D}(z,+\infty):=\CC_{\infty}$, но $\overline{D}(z,0):=\{z\}$; $D_*(z,r):=\bigl(D(z,r)\bigr)_*$, $D_*(r):=D_*(0,r)$, $\overline{D}_*(z,r):=\bigl(\overline{D} (z,r)\bigr)_*$, $\overline{D}_*(r):=\overline{D}_ *(0,r)$.

\subsubsection{Функции}\label{1_1_2}  Функция $f\colon X\to Y$ с упорядоченными $(X,\leq)$ и
$(Y,\leq)$ {\it возрастающая}, если для $x_1,x_2\in X$ из $x_1\leq x_2$ следует $f(x_1)\leq f(x_2)$. Аналогично для убывания. В основном  $Y\subset \RR_{\pm \infty}$. {\it На множествах функций\/} с упорядоченным
множеством значений {\it отношение порядка\/} индуцируется с множества значений как {\it поточечное.}

Для  $X\ni x_0$ и  $f\colon X\setminus \{x_0\}\to \RR_{\pm\infty}$ записи <<$f(x)\leq O(1)$ {\it  при\/} $x\to x_0$>> или $f(x)\underset{x\to x_0}{\leq} O(1)$  означает, что функция $f$ в какой-то  проколотой окрестности точки $x_0$ ограничена сверху некоторым числом из $\RR$. 

Пусть $S\subset \CC_{\infty}$. Классы $\Hol(S)$ и  $\sbh (S)$ состоят из сужений на $S$ функций, соответственно {\it голоморфных\/} и {\it субгармонических\/}  в каком-либо открытом множестве,  включающем в себя $S$;  
$\Har (S):=\sbh (S)\cap \bigl(-\sbh(S)\bigr)$ --- класс {\it гармонических\/} функций на $S$;
$\Hol_*(S):=\Hol(S)\setminus \{0\}$.  Тождественную $-\infty$ на $S$ обозначаем  $\boldsymbol{-\infty}\in \sbh (S)$; $\sbh_*(S):=\sbh (S)\setminus \{\boldsymbol{-\infty}\} $.

Для чи\-с\-ла $r >0$ определим  интегральное среднее на  окружности $\partial \overline D(r)$ от 
про\-и\-з\-в\-о\-л\-ь\-н\-ой фу\-н\-к\-ц\-ии $ v\colon \partial \overline D(r)\to \RR_{\pm\infty}$ как
(см. \eqref{fsr})
\begin{equation}\label{df:MCB}
C(r;v):=\frac{1}{2\pi} \int_{0}^{2\pi}  v(re^{i\theta}) \, d \theta
\end{equation}
--- конечно же, при условии существования интеграла справа \cite[определение 2.6.7]{Rans}, для которого, вообще говоря, допускаются значения $\pm\infty$. 

Число $C$ и постоянную функцию, тождественно равную $C$, не различаем. Через $\const_{a_1, a_2, \dots}$ обозначаем вещественные постоянные, зависящие от $a_1, a_2, \dots$ и, если не
оговорено противное, только от них. 

\subsubsection{Меры и заряды}\label{1_1_3} Далее $\mathcal M (S)$ --- класс всех {\it счетно-аддитивных фу\-н\-к\-ций борелевских
подмножеств борелевского множества $S\subset \CC_{\infty}$ со значениями в $\RR_{\pm\infty}$, конечных на компактах из $S$.\/}
Элементы из $\mathcal M (S)$ называем {\it зарядами,\/} или {\it вещественными мерами,\/} на $S\subset \CC_{\infty}$; $\mathcal M^+ (S):=\bigl(\mathcal M (S)\bigr)^+$ --- подкласс {\it положительных мер.\/} Заряд $\mu \in \mathcal M (S)$ {\it сосредоточен\/} на
$\mu$-измеримом подмножестве $S_0\subset S$, если $\mu (S')=\mu (S'\cap S_0)$ для любого $\mu$-измеримого подмножества $S'\subset S$.
Очевидно, заряд $\mu$ сосредоточен на носителе $\supp \mu\subset S$. Для измеримого по $\mu\in \mathcal M(S)$ подмножества $S_0\subset
S$ через $\mu\bigm|_{S_0}$ обозначаем {\it сужение заряда $\mu$ на $S_0$.\/} Через $\lambda_{\CC}$ и $\lambda_{\RR}$ обозначаем {\it
лебеговы меры\/} соответственно на $\CC$ и $\RR$,  а также их сужения на подмножества из $\CC$. Нижний индекс $\CC$ в $\lambda_{\CC}$ при
этом часто опускаем. Через $\delta_z$ обозначаем {\it меру Дирака в точке $z\in \CC_{\infty}$,\/} т.\,е. вероятностную меру с $\supp \delta_z=\{z\}$. Как обычно, для $\mu\in \mathcal M(\CC)$ через
$\mu^+:=\max\{0,\mu\}$, $\mu^-:=(-\mu)^+$ и $|\mu|:=\mu^++\mu^-$, обозначаем {\it верхнюю, нижнюю\/ {\rm и} полную вариации заряда\/
$\mu=\mu^+ -\mu^-$.} Для $\nu\in \mathcal M (S )$ и $D (z,r)\subset S$ 
\begin{equation}\label{df:nup} 
\nu (z,r):=\nu \bigl(\,\overline D(z,r)\bigr),\quad \nu^{\rad}(r):=\nu(0,r) 
\end{equation} --- {\it считающая функция с
центром\/ $z$} и {\it считающая радиальная функция} заряда $\nu$ соответственно. 
Для $\nu\in \mathcal M(\CC)$ используем
и {\it функцию распределения\/} $\nu^{\RR}$ сужения заряда $\nu$ {\it на\/} $\RR$ (частный случай см. в
\eqref{nuRA0}): 
\begin{equation}\label{nuR} \nu^{\RR}(x):=\begin{cases} -\nu\bigl( [x,0) \bigr) \; &\text{ при } x<0 ,\\ \nu\bigl(
[0,x] \bigr) \; &\text{ при } x\geq 0. \end{cases} 
\end{equation}

Для открытого $\mc O\subset \CC_{\infty}$ борелевскую меру Рисса функции ${u}\in \sbh_* ({\mathcal O})$ обозначаем как $\nu_{u}:=\frac{1}{2\pi}\Delta {u}\in \mathcal M ^+(\mathcal O)$ или
$\mu_u$ и т.\,п., где оператор Лапласа $\Delta$ действует в смысле теории обобщённых функций. 
Для функции  $\boldsymbol{-\infty}$ её мера Рисса по
определению равна $+\infty$ на любом подмножестве из ${\mathcal O}$.

\section{Шкалы роста и исчезания функции}\label{sss:sh} 
\setcounter{equation}{0} 
Основная цель этого \S~\ref{sss:sh} ---
систематизировать в целом известные элементарные факты, изложения и применения которых разбросаны по различным источникам, а также
согласовать определения и понятия. Определения и утверждения этого \S~\ref{sss:sh} не зависят от выбора числа $r_0>0$. 
\subsection{Рост около бесконечности}
Для $f\colon [r_0,+\infty) \to \RR$ определим  
\begin{subequations}\label{se:nu0} 
\begin{align} \ord_{\infty}[f]&:=\limsup_{r\to +\infty} \frac{\log \bigl(1+f^+(r)\bigr)}{\log r}\in
\RR^+\cup \{+\infty\} 
\tag{\ref{se:nu0}o}\label{senu0:a} \\ \intertext{{\it --- порядок роста\/} функции $f$ {\it около\/} точки $+\infty$; для $p\in
\RR^+$} \type^{\infty}_p[f]&:=\limsup_{r\to +\infty} \frac{f^+(r)}{r^p}\in \RR^+\cup \{+\infty\}
 \tag{\ref{se:nu0}t}\label{senu0:c}
\end{align} 
\end{subequations} 
 {\it --- тип роста\/} функции $f$ {\it при порядке\/ $p$ около\/} точки $+\infty$. Очевидно,
\begin{equation}\label{t_o} \text{\it если $\type_p^{\infty}[f]<+\infty$, то $\ord_{\infty}[f]\leq p$}. \end{equation} Обратное к \eqref{t_o}
неверно. Используем еще одну характеризацию $f$ около $+\infty$.
Функция $f$ принадлежит {\it классу сходимости\/} или {\it классу расходимости при порядке роста\/ $p\in \RR^+$ около\/} точки $+\infty$ \cite[определение 4.1]{HK}, если соответственно сходится (конечен) или расходится интеграл
\begin{equation}\label{{se:fcc}a} 
\int_{r_0}^{+\infty}\frac{\bigl|f(t)\bigr|}{t^{p+1}}\dd t.
\end{equation}. 
\begin{propos}\label{pr:fcc} 
	Пусть $f\colon [r_0,+\infty)\to \RR$ --- возрастающая функция. 
	\begin{enumerate}[{\rm (i)}] 
		\item\label{i:i0} Если $\ord_{\infty}[f]< p$, то
функция $f$ принадлежит классу сходимости при порядке роста $p$ около $+\infty$. 

\item\label{i:i1} Из сходимости интеграла \eqref{{se:fcc}a} следует $\type_p^{\infty}[f]=0$ и 
\begin{equation}\label{cc:fint}
\int_{r_0}^{+\infty}\frac{\dd f(t)}{t^{p}}<+\infty. 
\end{equation} 
\item\label{i:i2} Обратно, из \eqref{cc:fint} при $p>0$ следует
$\type_p^{\infty}[f]=0$ и сходимость \eqref{{se:fcc}a}, а при $p=0$ существует $\lim\limits_{r\to +\infty}f(r)<+\infty$ и $\type_0^{\infty}[f]<+\infty$. 
\end{enumerate} 
Кроме того, в условиях \eqref{i:i1} и/или \eqref{i:i2} имеет место  равенство 
\begin{equation}\label{in:poch} \int_{r}^{+\infty}\frac{\dd
f(t)}{t^{p}}=-\frac{f(r)}{r^p}+p\int_{r}^{+\infty}\frac{f(t)}{t^{p+1}}\dd t 
\end{equation} 
\end{propos} 
\begin{proof} Утверждение п.~\eqref{i:i0} очевидно. Если $p=0$ в п.~\eqref{i:i1}, то из сходимости интеграла в \eqref{{se:fcc}a} следует, что существует $\lim\limits_{r\to +\infty}f(r)=0$ и заключения п.~\eqref{i:i1} очевидны. Значит достаточно рассмотреть случай $p>0$. Как в
\cite[лемма 4.3]{HK} при $r\geq r_0$ имеем 
\begin{equation*} f(r) \frac1p\, r^p \leq \int_{r}^{+\infty}\frac{f(t)}{t^{p+1}}\dd
t\underset{r\to +\infty}{\longrightarrow}0, 
\end{equation*} 
что влечет за собой $\type_p^{\infty}[f]=0$. Отсюда, подобно
\cite[(4.2.5)--(4.2.6)]{HK}, интегрируя по частям, для любого $r\geq r_0$ получим \eqref{in:poch}, 
где сходимость последнего интеграла влечет за собой \eqref{cc:fint}.

Обратно, если $p>0$, то при $r_0\leq r<R$ из \eqref{cc:fint} следует 
\begin{equation*} \frac{1}{R^p}\bigl(f(R)-f(r)\bigr)\leq
\int_{r}^{R}\frac{\dd f(t)}{t^{p}}\dd t\underset{r\to +\infty}{\longrightarrow}0, 
\end{equation*} 
откуда $\type_p^{\infty}[f]=0$.
Тогда справедливо  \eqref{in:poch} и сходится \eqref{{se:fcc}a}. Если  $p=0$, то  \eqref{cc:fint} эквивалентно существованию предела $\lim\limits_{r\to +\infty}f(r)<+\infty$. 
\end{proof}

\subsection{ Исчезание около нуля} Для $f\colon (0,r_0] \to \RR$ определим 
\begin{subequations}\label{se:nu00} 
	\begin{align} 
	\ord_{0}[f]&:=\liminf_{0<r\to 0} \frac{\log \bigl|f(r)\bigr|}{\log r}\in \RR^+\cup \{+\infty\}
\tag{\ref{se:nu00}o}\label{senu00:o} 
\\ \intertext{{\it --- порядок
исчезания\/} функции $f$ {\it около\/} точки $0$, а  для $p\in \RR^+$} \type^{0}_p[f]&:=\limsup_{0<r\to 0} \frac{\bigl|f(r)\bigr|}{r^p}\in \RR^+\cup \{+\infty\}
\tag{\ref{se:nu00}t}\label{senu00:c} 
\end{align} \end{subequations} 
{\it --- тип исчезания\/} функции $f$ {\it при порядке\/ $p$ около\/} точки $0$. Очевидно, 
\begin{equation}\label{t_o0} 
\text{\it если $\type_p^{0}[f]<+\infty$, то $\ord_{0}[f]\geq p$}. 
\end{equation} 
Обратное к \eqref{t_o0} неверно. Используем ещё одну  характеризацию исчезания около $0$. Говорим, что функция $f\colon (0, r_0]\to \RR$ принадлежит {\it классу сходимости\/}
или {\it классу расходимости при порядке\/ $p\in \RR^+$ около\/} точки $0$, если соответственно сходится (конечен) или расходится интеграл 
\begin{equation}\label{{se:fcc}0}  
\int_{0}^{r_0}\frac{\bigl|f(t)\bigr|}{t^{p+1}}\dd t.
\end{equation}. 
\begin{propos}\label{pr:fcc0} 
	Пусть $f\colon (0,r_0]\to \RR$ --- возрастающая функция. 
	\begin{enumerate}[{\rm (i)}] \item\label{i:i00} Если $\ord_0[f]\overset{\eqref{senu00:o}}{>} p$, то функция $f$
принадлежит классу сходимости интеграла \eqref{{se:fcc}0} при порядке $p$ около нуля. 
\item\label{i:i10} Из сходимости интеграла \eqref{{se:fcc}0}
следует, что существует 
\begin{equation}\label{ex:limf} \lim_{0<r\to 0} f(r)\log r =0, 
\end{equation} 
а также
\begin{equation}\label{ex:limf0} \int_0^{r_0}\log t \dd f(t)>-\infty, \end{equation} а при $p>0$ еще и $\type_p^{0}[f]\overset{\eqref{senu00:c}}{=}0$, а также сходится интеграл 
\begin{equation}\label{cc:fint0}
 \int_{0}^{r_0}\frac{\dd f(t)}{t^{p}}<+\infty. 
\end{equation} 
\item \label{i:i20}
Обратно, если выполнено \eqref{ex:limf0}, то существуют пределы 
\begin{equation}\label{es:00l} f(0):=\lim_{0<r\to 0} f(r)\in \RR,
\quad \lim_{0<r\to 0}\bigl(f(r)-f(0)\bigr)\log r=0, 
\end{equation} 
сходится интеграл 
\begin{equation}\label{f-f0}
\int_{0}^{r_0}\frac{f(t)-f(0)}{t}\dd t<+\infty, 
\end{equation} 
и $\type_0^{0}[f]\overset{\eqref{senu00:c}}{<}+\infty$. Если же при $p>0$ выполнено
\eqref{cc:fint0}, то, наряду с \eqref{es:00l}, $\type_p^{0}\bigl[f-f(0)\bigr]\overset{\eqref{senu00:c}}{=}0$ и 
\begin{equation}
\int_{0}^{r_0}\frac{f(t)-f(0)}{t^{p+1}}\dd t<+\infty. 
\end{equation} 
\end{enumerate} 
В условиях из \eqref{i:i10} и/или  \eqref{i:i20} при $p>0$  имеет место равенство 
\begin{equation}\label{in:poch0}
\int_{0}^{r_0}\frac{f(t)-f(0)}{t^{p+1}}\dd t=-\frac{f(r_0)-f(0)}{pr_0^p}+\frac1p\int_{0}^{r_0}\frac{\dd f(t)}{t^{p}},
\end{equation} а при $p=0$ --- равенство
 \begin{equation}\label{in:0}
\int_{0}^{r_0}\frac{f(t)-f(0)}{t}\dd t=\bigl(f(r_0)-f(0)\bigr)\log r_0- \int_0^{r_0}\log t \dd f(t). 
\end{equation} \end{propos}
\begin{proof} Утверждение \eqref{i:i00} очевидно.

Из сходимости интеграла \eqref{{se:fcc}0} имеем  $\lim\limits_{0<r\to 0} f(t)=0$. Следовательно, $f$ --- положительная функция и при $0<r<\sqrt r\leq \min \{1,r_0\}$  имеем
\begin{equation*} 0\underset{0<r\to 0}{\longleftarrow}\int_r^{\sqrt r}\frac{f(t)}{t^{p+1}}\dd t \geq
\int_r^{\sqrt r}\frac{f(t)}{t}\dd t\geq -\frac{1}{2}\,f(r)\log r, 
\end{equation*} 
что доказывает существование предела в \eqref{ex:limf}, равного нулю. Тогда 
\begin{multline*} \int_0^{r_0}\log t \dd f(t)=f(r_0)\log r_0-\int_{0}^{r_0}\frac{f(t)}{t}\dd t
\\ \geq f(r_0)\log r_0-\int_{0}^{r_0}\frac{f(t)}{t^{p+1}}\dd t>-\infty, 
\end{multline*} 
что доказывает \eqref{ex:limf0} и \eqref{in:0} с
$f(0)=0$ . При $p>0$ имеем
\begin{equation*}
 0\underset{0<r\to 0}{\longleftarrow}\int_{r}^{2r}
\frac{f(t)}{t^{p+1}}\dd t\geq  f(r)\,\frac{1}{p} \left(\frac{1}{r^p}-\frac{1}{(2r)^p}\right)=\frac{1}{p} \left(1-\frac{1}{2^p}\right)\frac{f(r)}{r^p},
\end{equation*} 
откуда $\type_p^{0}[f]\overset{\eqref{senu00:c}}{=}0$. Тогда можно интегрировать по частям, что дает \eqref{in:poch0} с $f(0)=0$, откуда имеет место \eqref{cc:fint0}.

Обратно, пусть выполнено \eqref{ex:limf0}. Тогда существует число $C\in \RR^+$, с которым для любых $r\in (0, r_1)$ при фиксированном $r_1<\min \{1,r_0\}$ 
\begin{equation}\label{Clfr1}
 -C\leq \int_r^{r_1}\log t \dd f(t)\leq \log r_1 \int_r^{r_1}\dd f(t)=\bigl(f(r_1)-f(r)\bigr)\log r_1, 
\end{equation} 
откуда следует существование первого предела в \eqref{es:00l} и возможность
доопределить значение $f(0)\in \RR$. В частности, $\type_0^{0}[f]\overset{\eqref{senu00:c}}{<}+\infty$. Кроме того, устремляя в \eqref{Clfr1} сначала $r$ к нулю, а затем и $r_1$, получаем 
\begin{equation}\label{r-rf}
 0=\lim_{0<r_1\to 0}\int_0^{r_1}\log t \dd f(t)\leq \liminf_{0<r_1\to 0}
\,\bigl(f(r_1)-f(0)\bigr)\log r_1. 
\end{equation} 
При $0<r<\sqrt r<r_1=\min \{1,r_0\}$ из 
\begin{multline*} 0\underset{0<r\to 0}{\longleftarrow}\int_{r}^{\sqrt r} \log t\dd f(t)= \int_{r}^{\sqrt r} \log t\dd \bigl(f(t)-f(0)\bigr)
\\
\geq \log r\int_{r}^{\sqrt r} \dd \bigl(f(t)-f(0)\bigr)  \geq \log r\int_{0}^{\sqrt r} \dd \bigl(f(t)-f(0)\bigr)\\
=\bigl(f(\sqrt r)-f(0)\bigr)\log r =2\bigl(f(\sqrt r)-f(0)\bigr)\log \sqrt r 
\end{multline*} 
следует  $$ 0\geq \limsup_{0<r_1\to 0} \,\bigl(f(\sqrt r)-f(0)\bigr)\log \sqrt r,$$ 
что вместе с \eqref{r-rf} дает второе предельное равенство в \eqref{es:00l} и  можно интегрировать по частям:
\begin{equation*} \int_{0}^{r_0}\frac{f(t)-f(0)}{t}\dd t=\bigl(f(r_0)-f(0)\bigr)\log r_0- \int_0^{r_0}\log t \dd
f(t)\overset{\eqref{ex:limf0}}{<}+\infty, \end{equation*} что дает \eqref{f-f0} и равенство \eqref{in:0}. Пусть теперь $p>0$. Тогда
автоматически выполнено \eqref{ex:limf0}, а следовательно и \eqref{es:00l}. При этом из \eqref{cc:fint0} имеем 
\begin{equation*}
0\underset{0<r\to 0}{\longleftarrow} \int_{0}^{r}\frac{\dd \,\bigl(f(t)-f(0)\bigr)}{t^p} \geq \frac{f(r)-f(0)}{r^p}, \end{equation*}
что доказывает равенства $\type_p^{0}\bigl[f-f(0)\bigr]\overset{\eqref{senu00:c}}{=}0$ и 
\begin{equation*}
\int_{0}^{r_0}\frac{f(t)-f(0)}{t^{p+1}}\dd t=-\frac{f(r_0)-f(0)}{pr_0^p}+\frac1p\int_{0}^{r_0}\frac{\dd f(t)}{t^{p}} \overset{\eqref{cc:fint0}}{<}+\infty, 
\end{equation*} 
где последнее равенство совпадает с \eqref{in:poch0}. 
\end{proof}

\begin{remark}
Для функции $v\colon \CC\to \RR_{\pm \infty}$ все характеристики роста (соответственно исчезания) и понятия принадлежности классу (ра-)\-с\-х\-о\-д\-и\-м\-о\-с\-ти около $\infty$
(соответственно нуля) из \S~\ref{sss:sh} полностью определяются одноимёнными характеристиками и понятиями для функции $f(r):=\sup \bigl\{v(z)\colon z\in D(r)\bigr\}$. Для целой функции $g\in \Hol(\CC)$ эти характеристики и понятия те же, что и для $v:=\log |g|\colon \CC\to \RR_{-\infty}$.
\end{remark}

 \section{Гармоническая мера}\label{gm}
 \setcounter{equation}{0} 
\subsection{Гармоническая мера и интеграл Пуассона}\label{p_3_1}
Через $\mathcal B (S)$ обозначаем $\sigma$-алгебру борелевских подмножеств в  борелевском множестве $S\subset\CC_{\infty}$, а при $S\subset \CC$ используем обозначение $\mathcal B_{\rm b} (S)\subset \mathcal B (S)$ для подкласса ограниченных в $\CC$ элементов  из $\mathcal B (S)$. Для подобласти $D\subset \CC_{\infty}$ с неполярной границей $\partial D\neq \varnothing$ в обозначениях из
\cite[определение 4.3.1]{Rans}, несколько отличных от обозначений из \cite[теорема 3.10]{HK}--\cite{GM}, через
\begin{equation}\label{om_0} \omega_D\colon D\times \mathcal B (\partial D) \to \RR^+ \end{equation} обозначаем функцию, называемую
{\it гармонической мерой для\/ $D$}, которая однозначно определяется требованиями \begin{enumerate}[{\rm (a)}] \item\label{hm:a} $
\omega_D(z, B)\in \mathcal{M}^+(\partial D)$ для каждой точки $z\in D$ и множества $B\in \mathcal B (\partial D)$; \item\label{hm:b}
если $f\in C_{\RR}(\partial D)$, то {\it интеграл Пуассона\/} $\mathcal{P}_D f$ функции $f$ на $D$ 
\begin{equation}\label{df:PDf} 
\mathcal P_Df\colon z\mapsto \int_{\partial D} f(w)\dd \omega_D(z,w), \quad z\in D, 
\end{equation}
совпадает с ассоциированной с функцией $f$ {\it функцией Перрона\/} 
\begin{equation*} 
z\mapsto \sup \Bigl\{u(z)\colon u\in \sbh(D), \limsup_{D\ni z'\to z}u(z')\leq f(z) \text{ для всех $z\in \partial D$}\Bigr\}, \quad z\in D. \end{equation*}
\end{enumerate} Очевидно, $\omega_D(z,\cdot)$ --- {\it вероятностная мера,\/} в чем можно убедится на примере $f=1$ на $\partial D$.
Для произвольного подмножества $B\subset \CC_{\infty}$ при $B\cap \partial D\in \mathcal B (\partial D) $ и точки $z\in D$ полагаем по
определению \begin{equation}\label{df:o} \omega_D(z, B):=:\omega(z, B; D):=\omega_D(z, B\cap \partial D) \end{equation} --- {\it
гармоническая мера множества $B\subset \CC_{\infty}$ в точке $z\in D$ для\/} $D$.

При включениях $\partial D\subset S \subset \CC_{\infty}$ для непрерывной функции $f\colon S\to \RR_{\pm \infty}$ с
непрерывным сужением $f\bigm|_{\partial D}$ {\it интеграл Пуассона\/} $\mathcal P_Df$ определяется как 
\begin{equation}\label{df:IP} \mathcal P_Df:=\mathcal P_D\bigl(f\bigm|_{\partial D}\bigr). \end{equation} Определение \eqref{df:IP},
допуская в \eqref{df:IP} и значения $-\infty$, можно распространить на полунепрерывные сверху сужения $f\bigm|_{\partial D}$ со значениями в $\RR_{-\infty}$, поскольку такая функция --- предел убывающей последовательности непрерывных функций на $\partial D$.  Более того, далее в многих ситуациях можно допускать неопределённость функции $f$ в конечном числе точек на границе $\partial D$, в роли которых будут выступать, как правило, две точки $0, \infty \in \CC_{\infty}$.  В частном случае, когда
$D=\CC^{\up}$, определения \eqref{om_0}--\eqref{df:IP} можно переписать через классические {\it ядро Пуассона\/}
\begin{equation}\label{df:kP} \Poi_{\CC^{\up}}(t,z):=\frac{1}{\pi}\Im \frac{1}{t-z} =\frac{1}{\pi}\,\frac{\Im z}{\bigl(t-\Re
z\bigr)^2+(\Im z)^2}\, , \quad (t,z)\in \RR\times \CC^{\up}, 
\end{equation}
где нижний индекс $\CC^{\up}$ в обозначении ядра
Пуассона в левой части далее опускаем, а также, для всех $z\in \CC^{\up}$, через {\it интеграл Пуассона\/} 
\begin{subequations}\label{se:om} \begin{align}
\omega_{\CC^{\up}}(z, B)&= \int_{B\cap \RR}\Poi (t,z) \dd t =\int_{B\cap \RR_{\pm\infty}} \Poi (t,z) \dd
\lambda_{\RR}(t), \tag{\ref{se:om}a}\label{seom:a} \\ (\mathcal P_{\CC^{\up}}f)(z)&=\int_{-\infty}^{+\infty} f(t) \Poi (t,z)\dd t
=\int_{\RR_{\pm\infty}}\Poi (t,z)\,f(t)\dd \lambda_{\RR}(t). 
 \tag{\ref{se:om}b}\label{seom:b} 
 \end{align}
\end{subequations} 
В случае $D=\CC^{\up}$ в обозначениях левых частей из  \eqref{se:om} нижний индекс ${\CC^{\up}}$, 
также, как правило, опускаем, т.\,е. пишем просто $\omega(z, B)$ и $(\mathcal P f)(z)$. Определение \eqref{seom:a} для $B\cap \RR_{\pm\infty}\in \mathcal B(\CC_{})$ можно распространить на $z=x\in {\RR}$, полагая $\omega(x,B)=\delta_x(B)$ ---
мера Дирака в точке $x$. Для функций $f$ с полунепрерывным сверху сужением $f\bigm|_{\RR}$ и со значениями в $\RR_{-\infty}$ при
\begin{equation}\label{J2+} J_2^+[f]:=\int_{-\infty}^{+\infty} \frac{f^+(t)}{1+t^2}\dd t <+\infty \end{equation} будем
полагать $(\mathcal P f)(x)=f(x)\in \RR_{-\infty}$ для $x\in \RR$. 

Гармоническая мера для верхней полуплоскости имеет известный простой геометрический смысл. Для каждого отрезка $[t_1,t_2]\subset \RR$
--- это угол, под которым виден этот отрезок из точки $z\in \CC^{\up}$, делённый на $\pi$ \cite[гл.~I, 3]{Garnett}, что согласованно с пп. \ref{b_seq}. Аналитически это записывается как\begin{subequations}\label{multo} 
	\begin{align} \omega\bigl(z, [t_1,t_2]\bigr)= \frac{1}{\pi}(\arctg d_2 -\arctg d_1),
\tag{\ref{multo}a}\label{mult}\\ d_2:= \frac{t_2-\Re z}{\Im z}, \quad d_1:=\frac{t_1-\Re z}{\Im z}\,. \tag{\ref{multo}b}\label{d1d2}
\end{align} 
\end{subequations}

\subsection{Представления и оценки гармонической меры}\label{s:dal} Приведенные ниже утверждения данного п.~\ref{p_3_1} и следующего
п.~\ref{s:dal} часто формулируются в более общей форме, чем необходимо для наших целей. Причины этого состоят как в исключительной важности и пользе понятия гармонической меры и её количественных характеристик в теориях функций комплексного переменного и потенциала, так и в заготовке материалов для исследований в направлениях, отличных от затронутых в настоящей работе. Всюду в
доказательствах мы предпочитаем использовать геометрический смысл \eqref{multo} гармонической меры, хотя все утверждения этого п.~\ref{s:dal} могут быть установлены и через её представление 
\eqref{seom:a}  интегралом Пуассона. Именно последний подход был использован в \cite{Kha89}--\cite[гл.I, II]{KhDD92}. 

\begin{propos}\label{pr:gm} Пусть $-\infty<t_1<t_2<+\infty$. Если точка $z\in \CC^{\up}$ лежит вне замкнутого полукруга из верхней полуплоскости с ди\-а\-м\-е\-т\-р\-ом-от\-р\-е\-з\-к\-ом $[t_1,t_2]\subset \RR\subset \CC$, что аналитически записывается как
\begin{subequations}\label{krt} 
	\begin{align} 
	\Bigl|z-\frac{t_2+t_1}{2}\Bigr|> \frac{t_2-t_1}{2}\,, \tag{\ref{krt}a}\label{zt1t2}\\
\intertext{или в эквивалентной форме} |z|^2-\Re z\, (t_1+t_2)+t_1t_2>0, \tag{\ref{krt}b}\label{kr} 
\end{align} 
\end{subequations}
 то имеют место равенство и оценка сверху 
 \begin{subequations}\label{eo} 
 	\begin{align}
\omega\bigl(z,[t_1,t_2]\bigr)&=\frac{1}{\pi}\arctg\frac{(t_2-t_1)\,\Im z}{|z|^2-\Re z \, (t_1+t_2)+t_1t_2}&
\tag{\ref{eo}a}\label{eo:a} \\ &\leq \frac{1}{\pi}\,\frac{(t_2-t_1)\,\Im z}{|z|^2-\Re z \, (t_1+t_2)+t_1t_2}\, .
\tag{\ref{eo}b}\label{eo:b} 
\end{align} 
\end{subequations}

Если точка $z\in \CC^{\up}$ лежит в открытом полукруге из $\CC^{\up}$ с ди\-а\-м\-е\-т\-р\-ом-от\-р\-е\-з\-к\-ом $[t_1,t_2]$, что в
аналитической форме записывается как \begin{subequations}\label{krtvn} \begin{align} \Bigl|z-\frac{t_1+t_2}{2}\Bigr|<
\frac{t_2-t_1}{2}\,, \tag{\ref{krtvn}a}\label{zt1t2vn}\\ \intertext{или в эквивалентной форме} |z|^2-\Re z\, (t_1+t_2)+t_1t_2<0,
\tag{\ref{krtvn}b}\label{krvn} \end{align} \end{subequations} то имеют место равенство и оценка снизу \begin{subequations}\label{eovn}
\begin{align} \omega\bigl(z,[t_1,t_2]\bigr)&=1+\frac{1}{\pi}\arctg\frac{(t_2-t_1)\,\Im z}{|z|^2-\Re z \, (t_1+t_2)+t_1t_2}&
\tag{\ref{eovn}a}\label{eovn:a} \\ &\geq 1+\frac{1}{\pi}\,\frac{(t_2-t_1)\,\Im z}{|z|^2-\Re z \, (t_1+t_2)+t_1t_2}\, .
\tag{\ref{eovn}b}\label{eovn:b} \end{align} \end{subequations}

Наконец, если точка $z\in \CC^{\up}$ лежит на полуокружности из $\CC^{\up}$ с ди\-а\-м\-е\-т\-р\-ом-от\-р\-е\-з\-к\-ом $[t_1,t_2]$,
что в аналитической форме записывается как 
\begin{subequations}\label{krtvn0} 
\begin{align} \Bigl|z-\frac{t_1+t_2}{2}\Bigr|=
\frac{t_2-t_1}{2}\,, \tag{\ref{krtvn0}a}\label{zt1t2vn0}\\ \intertext{или в эквивалентной форме} |z|^2-\Re z\, (t_1+t_2)+t_1t_2=0,
\tag{\ref{krtvn0}b}\label{krvn0} 
\end{align} 
\end{subequations} то $\omega\bigl(z,[t_1,t_2]\bigr)=1/2$.
\end{propos}
\begin{proof} В случае \eqref{krt} в обозначениях \eqref{d1d2}
имеем \begin{multline*} d_2d_1=\frac{t_2t_1-\Re z\,(t_2+t_1)+(\Re z)^2}{(\Im z)^2} \\= \frac{t_2t_1-\Re z\,(t_2+t_1)+ |z|^2}{(\Im
z)^2}-1\overset{\eqref{kr}}{>}-1, \end{multline*} что позволяет применить формулу разности арктангенсов в \eqref{mult} в форме
\begin{equation*} \arctg d_2 -\arctg d_1=\arctg\frac{d_2-d_1}{1+d_1d_2}\overset{\eqref{d1d2}}{=} \arctg\frac{(t_2-t_1)\,\Im
z}{|z|^2-\Re z \, (t_1+t_2)+t_1t_2}\,. \end{equation*} Правая часть здесь дает \eqref{eo:a}, а затем и \eqref{eo:b} ввиду возрастания
$\arctg$.

В случае \eqref{krtvn} в обозначениях \eqref{d1d2} имеем $d_2d_1<-1$ и $d_2\overset{\eqref{d1d2}}{>}0$, что позволяет применить
формулу разности арктангенсов в \eqref{mult} в форме \begin{multline*} \arctg d_2 -\arctg
d_1=\pi+\arctg\frac{d_2-d_1}{1+d_1d_2}\\
\overset{\eqref{d1d2}}{=} \pi+\arctg\frac{(t_2-t_1)\,\Im z}{|z|^2-\Re z \, (t_1+t_2)+t_1t_2}\,.
\end{multline*} Правая часть здесь дает \eqref{eovn:a}, а затем и \eqref{eovn:b} ввиду возрастания функции $\arctg$ и отрицательности
ее значений в правой части \eqref{eovn:a}. Случай \eqref{krtvn0} по геометрическому смыслу гармонической меры очевиден. \end{proof}
\begin{propos}\label{es:qmlw} При $b>1$ для точки $z\in \CC^{\up}$, лежащей вне открытого полукруга из верхней полуплоскости \begin{equation}\label{zt1t2b} 
\Bigl|z-\frac{t_1+t_2}{2}\Bigr|\geq b\,\frac{t_2-t_1}{2}\,, \quad z\in \CC^{\up},\quad \text{где \;
$b>1$ фиксировано}, \end{equation} справедлива нижняя оценка \begin{subequations}\label{z2lw} \begin{align}
\omega\bigl(z,[t_1,t_2]\bigr)&\geq \frac{b-1}{2\pi b}\, \frac{(t_2-t_1)\,\Im z}{|z|^2-\Re z \, (t_1+t_2)+t_1t_2}
\tag{\ref{z2lw}a}\label{z2lwb}\\ &\geq \frac{b-1}{2\pi b}\, \frac{(t_2-t_1)\,\Im z}{\bigl(|z|+t_1\bigr)\bigl(|z|+t_2\bigr)}\,.
\tag{\ref{z2lw}b}\label{z2lwbb} \end{align} \end{subequations} \end{propos} \begin{proof} Версия равенства \eqref{eo:a} предложения
\ref{pr:gm} --- это \begin{lemma}\label{l:oup} Пусть $r>0$ и $z=te^{i\theta}$ с $t>r$, $0\leq \theta \leq \pi$. Тогда
\begin{equation}\label{eo_rz} \omega\bigl(z,[-r,r]\bigr)=\frac{1}{\pi}\arctg\frac{2rt\sin \theta}{t^2-r^2}, \end{equation} где
макcимум при фиксированных $t>r>0$ достигается при $\sin \theta =1$, а при $t=br$, где $b>1$, равен \begin{equation}\label{m:od}
\omega\bigl(ibr,[-r,r]\bigr)=\frac{1}{\pi}\arctg\frac{2b}{b^2-1}\, . \end{equation} \end{lemma}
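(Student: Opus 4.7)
The plan is to recognize Lemma \ref{l:oup} as an immediate specialization of Proposition \ref{pr:gm} to the symmetric interval $[t_1,t_2]=[-r,r]$, so the task reduces to checking one hypothesis and performing a clean substitution.

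First I would verify the ``outside'' hypothesis \eqref{zt1t2}--\eqref{kr}: with $(t_1+t_2)/2=0$ and $(t_2-t_1)/2=r$, condition \eqref{zt1t2} reads $|z|=t>r$, which is given, and equivalently \eqref{kr} becomes $|z|^2+t_1t_2=t^2-r^2>0$. Having secured the hypothesis I would apply formula \eqref{eo:a} with the substitutions $t_1+t_2=0$, $t_1t_2=-r^2$, $|z|^2=t^2$, $\Im z=t\sin\theta$, reading off \eqref{eo_rz} directly.

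For the extremal statement I would argue by monotonicity: since $\arctg$ is strictly increasing on $\RR$ and, for fixed $t>r>0$, the positive quantities $2rt$ and $t^2-r^2$ stay fixed as $\theta$ varies, the right-hand side of \eqref{eo_rz} depends monotonically on $\sin\theta$; on $[0,\pi]$ the sine attains its maximum $1$ at $\theta=\pi/2$, i.e., at $z=it$. Specialising further to $t=br$ with $b>1$ turns the argument of $\arctg$ into $\frac{2br^2}{b^2r^2-r^2}=\frac{2b}{b^2-1}$, giving \eqref{m:od}.

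There is no genuine obstacle here; the only point requiring attention is checking that one is on the correct branch of $\arctg$. The assumption $t>r$ places $z$ strictly outside the closed disk $\overline{D}(r)$, so Proposition \ref{pr:gm} yields the ``outside'' formula \eqref{eo:a} rather than the $\pi$-shifted ``inside'' formula \eqref{eovn:a}; carelessly ignoring this would shift the answer by $1$. Once this distinction is noted, the proof reduces to a substitution and a one-line monotonicity argument.
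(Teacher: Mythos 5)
Your proposal is correct and follows exactly the paper's route: the paper presents Lemma \ref{l:oup} as a direct corollary of formula \eqref{eo:a} of Proposition \ref{pr:gm} with $t_1=-r$, $t_2=r$, which is precisely your substitution, and the extremal claim and the case $t=br$ follow from the same monotonicity of $\arctg$ you invoke. Your explicit check of hypothesis \eqref{kr} (reducing to $t^2-r^2>0$) and the branch remark are the right points to verify, so there is nothing to add.
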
 Заключение леммы \ref{l:oup} о максимуме  \eqref{m:od} для отрезка $[t_1,t_2]$ означает, что при всех $z\in \CC^{\up}$, удовлетворяющих условию \eqref{zt1t2b}, в обозначении 
\begin{equation}\label{xfr} 
x:=\frac{(t_2-t_1)\,\Im z}{|z|^2-\Re z \, (t_1+t_2)+t_1t_2} 
\end{equation}
 имеем $0\leq x\leq \dfrac{2b}{b^2-1}$ согласно  геометрическому смыслу гармонической меры и возрастания функции $\arctg$. Тогда в силу вогнутости  $\arctg$ на $\RR^+$ 
\begin{equation}\label{arcx} 
\arctg x\geq
\frac{\arctg\frac{2b}{b^2-1}}{\frac{2b}{b^2-1}} \,x 
\quad \text{при} \quad 0\leq x\leq
\dfrac{2b}{b^2-1}\,, 
\end{equation} 
откуда согласно неравенствам $ \frac{1}{b-1}\leq\frac{2b}{b^2-1}\leq \frac{2}{b-1}$ при
$b>1$ следует 
\begin{equation*} \arctg x\overset{\eqref{arcx}}{\geq}   \frac{1}{2}\Bigl((b-1)\arctg\frac{1}{b-1}\Bigr)\cdot x\geq \frac12\frac{b-1}{b}\, x \quad \text{при} \quad 0\leq x\leq
\dfrac{2b}{b^2-1}\,, \end{equation*} где последнее неравенство основано на неравенстве $\arctg \frac{1}{y}\geq \frac{1}{1+y}$ при  всех $y=b-1> 0$. Отсюда ввиду \eqref{xfr} получаем  \eqref{z2lwb}. Для перехода от \eqref{z2lwb}  к \eqref{z2lwbb} оценим сверху последний знаменатель в \eqref{z2lwb}: \begin{equation*}
(t_1+t_2)+t_1t_2\leq |z|^2+|z|\bigl(|t_1|+|t_2|\bigr)+|t_1||t_2| =\bigl(|z|+t_1\bigr)\bigl(|z|+t_2\bigr),
\end{equation*} что и завершает доказательство. 
\end{proof}

Оценки сверху из  \cite[лемма 1.1.1]{KhDD92} и \cite[лемма 1.1]{Kha91}  уточняет
\begin{propos}\label{pr:gmA} Пусть $-\infty<t_1<t_2<+\infty$, $z\in \CC^{\up}$, $a\in (0,1)$. Из \begin{equation}\label{minM} a|z|\geq
\max\bigl\{|t_1|,|t_2|\bigr\} \end{equation} следует оценка \begin{equation}\label{afminAM} \omega\bigl(z,[t_1,t_2]\bigr)\leq
\frac{t_2-t_1}{\pi (1-a)^2}\, \Im \frac{1}{\bar z}\,, \end{equation} а при условии \begin{equation}\label{min} a\cdot
\min\limits_{t\in [t_1, t_2]} |t|\geq |z| \end{equation} --- оценка \begin{equation}\label{afmin} \omega\bigl(z,[t_1,t_2]\bigr)\leq
\frac{(t_2-t_1)\,a^2}{\pi (1-a)^2}\, \Im \frac{1}{\bar z}\,, \end{equation} \end{propos} \begin{proof} Левую часть \eqref{kr} можно
оценить снизу: \begin{equation}\label{zsn} |z|^2-\Re z\, (t_1+t_2)+t_1t_2\geq |z|^2-|z|\,
(t_1+t_2)+t_1t_2=\bigl(t_1-|z|\bigr)\,\bigl(t_2-|z|\bigr). \end{equation} При условии \eqref{minM} оба сомножителя в правой части
\eqref{zsn} отрицательны, т.\,е. выполнено условие \eqref{kr}. Следовательно, \begin{equation*}
\omega\bigl(z,[t_1,t_2]\bigr)\overset{\eqref{zsn}, \eqref{eo:b}}{\leq} \frac{1}{\pi}\,\frac{(t_2-t_1)\,\Im
z}{\bigl(|z|-t_1\bigr)\,\bigl(|z|-t_2\bigr)} \overset{\eqref{minM}}{\leq} \frac{1}{\pi}\,\frac{(t_2-t_1)\,\Im
z}{\bigl(|z|-a|z|\bigr)\,\bigl(|z|-a|z|\bigr)}\,, \end{equation*} где правая часть совпадает с правой частью в \eqref{afminAM}.
Условие \eqref{min}, в частности, означает, что точки $t_1,t_2\in \RR$ расположены по одну сторону от нуля и пара сомножителей в
правой части \eqref{zsn} одинакового знака, т.\,е. выполнено условие \eqref{kr}. Следовательно, \begin{equation*}
\omega\bigl(z,[t_1,t_2]\bigr)\overset{\eqref{zsn}}{\leq} \frac{1}{\pi}\,\frac{(t_2-t_1)\,\Im
z}{\bigl(t_1-|z|\bigr)\,\bigl(t_2-|z|\bigr)} \overset{\eqref{min}}{\leq} \frac{1}{\pi}\,\frac{(t_2-t_1)\,\Im
z}{\bigl(|z|/a-|z|\bigr)\,\bigl(|z|/a-|z|\bigr)}\,, \end{equation*} где правая часть совпадает с правой частью в \eqref{afmin}.
\end{proof} 
Оценки снизу из \cite[лемма 1.1.1]{KhDD92} и \cite[лемма 1.1(1), (1.2)]{Kha91}   уточняет 
\begin{propos}\label{pr:gmA+} В обозначениях предложения\/ {\rm \ref{pr:gmA}} при условии\/ \eqref{minM} 
\begin{equation}\label{afminAM-} 
\omega\bigl(z,[t_1,t_2]\bigr)\geq (t_2-t_1)
\frac{1-a}{8\pi}\, \Im \frac{1}{\bar z}\,. 
\end{equation} 
\end{propos}

\begin{proof} При условии \eqref{minM} элементарные геометрические рассуждения показывают, что выполнено \eqref{zt1t2b} с $b:=1/a$.
Кроме того, при условии \eqref{minM}, оставаясь в рамках \eqref{kr}, имеем оценку сверху 
$0 <  \bigl(|z|+t_1\bigr)\bigl(|z|+t_2\bigr) \overset{\eqref{minM}}{\leq} |z|^2(1+a)^2$. 
Отсюда по предложению \ref{es:qmlw} из оценки \eqref{z2lwbb} с $b=1/a$ получаем 
\begin{equation*} \omega\bigl(z,[t_1,t_2]\bigr)\geq
\frac{1/a-1}{2\pi/a}\,\frac{(t_2-t_1)\,\Im z}{|z|^2(1+a)^2} =\frac{1-a}{2\pi(1+a)^2}\,(t_2-t_1)\,\Im \frac{1}{\bar z}\, ,
\end{equation*}
что в силу $0< a<1$ дает нижнюю оценку \eqref{afminAM-}. 
\end{proof}

Для точек $z\in \CC^{\up}$ их аргументы $\arg z$ берутся из интервала $(0,\pi)$. \begin{propos}\label{pr:gmAa} Пусть $0\leq
t_1<t_2<+\infty$, $z\in \CC^{\up}$. Из \begin{equation}\label{mina} \pi <\arg z\leq \frac{\pi}{2}, \quad\text{т.\,е. $\cos \arg z\leq
0$,} \end{equation} следует оценка сверху \begin{equation}\label{afmina} \omega\bigl(z,[t_1,t_2]\bigr)\leq (t_2-t_1)\,
\frac{1}{\pi}\Im \frac{1}{\bar z}\,. \end{equation} Из условия \begin{equation}\label{minMa} -1< \cos \arg
z<\frac{2\sqrt{t_1t_2}}{t_1+t_2} \end{equation} следует оценка сверху \begin{equation}\label{afminAMa}
\omega\bigl(z,[t_1,t_2]\bigr)\leq\frac{( t_2-t_1)\, \Im z}{\pi \bigl(|z|-\sqrt{t_1t_2}\,\bigr)^2}\,. \end{equation} Пусть для
некоторого $a\in (0,1)\subset\RR^+$ выполнено условие\begin{equation}\label{minMaa} -1< \cos \arg z\leq
\frac{2a\sqrt{t_1t_2}}{t_1+t_2} \,. \end{equation} Тогда имеет место оценка сверху \begin{equation}\label{afminAMaa}
\omega\bigl(z,[t_1,t_2]\bigr)\leq\frac{ t_2-t_1}{\pi \bigl(1-a^2\bigr)}\, \Im \frac{1}{\bar z}\,. \end{equation} \end{propos}
\begin{proof} При условии \eqref{mina} для $[t_1,t_2]\subset \RR^+$ имеем 
	$$|z|^2-\Re z\, (t_1+t_2)+t_1t_2\geq |z|^2>0,$$  т.\,е. выполнено \eqref{kr} и по предложению \ref{pr:gm} из \eqref{eo:b} сразу следует \eqref{afmina}. При условии \eqref{minMa} выполнены соотношения
	\begin{equation}\label{sqrt} |z|^2-\Re z\, (t_1+t_2)+t_1t_2>
|z|^2-2|z|\sqrt{t_1t_2}+t_1t_2=\bigl(|z|-\sqrt{t_1t_2}\bigr)^2\geq 0, 
\end{equation} 
т.\,е. выполнено \eqref{kr} и по предложению \ref{pr:gm} из \eqref{eo:b} сразу следует \eqref{afminAMa}. Вариация оценки снизу \eqref{sqrt} при чуть более жёстком условии
\eqref{minMaa} дает \begin{multline}\label{zsnizu} |z|^2-\Re z\, (t_1+t_2)+t_1t_2\geq |z|^2-2a|z|\sqrt{t_1t_2}+t_1t_2 \\
=(1-a^2)|z|^2+\bigl(a|z|-\sqrt{t_1t_2}\bigr)^2\geq (1-a^2)|z|^2>0, \end{multline} откуда по предложению \ref{pr:gm} из \eqref{eo:b}
получаем \eqref{afminAMaa}. 
\end{proof}
Оценку снизу \eqref{afminAM-}  предложения \ref{pr:gmA+} дополняет 
\begin{propos}\label{r:+} Пусть $0<t_1<t_2 <+\infty$ и для $z\in \CC^{\up}$ выполнено условие \eqref{minMa}. Тогда справедлива оценка снизу 
\begin{equation}\label{essnizo} 
\omega\bigl(z,[t_1,t_2]\bigr)\geq \frac{1}{8\pi}\frac{t_1}{t_2}\, (t_2-t_1)\,\Im \frac{1}{\bar z} 
\end{equation} 
и, очевидно, если ещё и $t_2\leq At_1$ для некоторого числа $A>1$, то 
\begin{equation*}
\omega\bigl(z,[t_1,t_2]\bigr)\geq \frac{1}{8\pi A}\, (t_2-t_1)\,\Im \frac{1}{\bar z}\, . 
\end{equation*} 
\end{propos} 
\begin{proof}
Предположим сначала, что для точки $z$ выполнено ус\-л\-о\-в\-ие \eqref{minMaa} при некотором выбранном $a\in (0,1)$. Из элементарных геометрических построений при $0<t_1<t_2$ и
условии \eqref{minMaa} нетрудно вывести, что точка $z$ лежит вне полукруга из предложения \ref{es:qmlw}, а точнее выполнено условие
\eqref{zt1t2b} предложения \ref{es:qmlw} с числом 
\begin{equation}\label{dfb} 
b:=\sqrt{1+\frac{4(1-a^2)t_1t_2}{(t_2-t_1)^2}}>1 .
\end{equation} 
Тогда имеют место оценки снизу \eqref{z2lw} с выбранным в \eqref{dfb} значением $b>1$. В частности, из \eqref{z2lwb} в
сочетании с оценкой \eqref{zsnizu}, выполненной при условии \eqref{minMaa}, имеем 
\begin{equation}\label{z2lwb+-} \omega\bigl(z,[t_1,t_2]\bigr)\geq \frac{b-1}{2\pi b}\, \frac{(t_2-t_1)\,\Im z}{(1-a^2)|z|^2}\geq
\Bigl(1-\frac{1}{b}\Bigr)\frac{1}{4\pi (1-a)}\, (t_2-t_1)\,\Im \frac{1}{\bar z} \, . 
\end{equation} 
Оценим снизу первый сомножитель в
правой части \eqref{z2lwb+-}: \begin{equation}\label{1fr1b} 
1-\frac{1}{b}= 1-\frac{1}{\sqrt{1+\frac{4(1-a^2)t_1t_2}{(t_2-t_1)^2}}}\geq
1-\frac{1}{\sqrt{1+4(1-a^2){t_1}/{t_2}}} 
\end{equation} 
При $x\in [0,4]$ имеет место  неравенство  $1-\frac{1}{\sqrt{1+x}}\geq \frac{1}{8}\, x$, 
согласно которому  можем продолжить оценку снизу для правой части \eqref{1fr1b} в виде \begin{equation*}
1-\frac{1}{b}\geq \frac{1}{8} \, 4(1-a^2)\,\frac{t_1}{t_2} \geq \frac{1-a}{2} \frac{t_1}{t_2}\, . 
\end{equation*} 
Используя эту оценку в правой части \eqref{z2lwb+-}, получаем требуемую оценку \eqref{essnizo} для произвольного $a\in (0,1)$. Поскольку правая часть в \eqref{essnizo} не зависит от $a$, можем в условии \eqref{minMaa} по непрерывности устремить $a$ к единице. Таким образом, оценка \eqref{essnizo}
выполнена и при условии \eqref{minMa}. 
\end{proof}

\begin{remark}\label{r:t1t2+-} Условие $0\leq t_1<t_2<+\infty$ предложения \ref{pr:gmAa} легко адаптируется на случай
$-\infty<t_1<t_2\leq 0$. Таким образом, при $a\in (0,1)$ как в случае $0\leq t_1<t_2<+\infty$, так и в случае $-\infty<t_1<t_2\leq 0$
в той или иной мере неохваченным оценками предложений \ref{pr:gmA} и \ref{pr:gmAa} осталось множество \begin{equation}\label{xset}
\left\{z\in {\CC}^{\overline \up}\colon \; at_1 \leq |z|<\frac{1}{a}\,t_2,\; \cos \arg z\geq \frac{2a\sqrt{t_1t_2}}{t_1+t_2}\,
\right\}, \end{equation} а для $-\infty<t_1\leq 0\leq t_2<+\infty$, $t_1\neq t_2$, с тем же $a\in (0,1)$ --- круг
\begin{equation}\label{xsetd} D\bigl(0,\max\{|t_1|, |t_2|\}/a\bigr). \end{equation} \end{remark}

Из элементарной геометрии, к примеру, из теоремы косинусов, следует 
\begin{lemma}\label{prxset} В описанных в замечании\/ {\rm \ref{r:t1t2+-}} случаях множество \eqref{xset} и круг \eqref{xsetd} содержатся в круге 
	\begin{equation}\label{DsD} D\bigl((t_1+t_2)/2,
(a+1/a)\max\{|t_1|,|t_2|\}\bigr)\supset D\bigl(\max\{|t_1|,|t_2|\}\bigr). 
\end{equation} 
\end{lemma}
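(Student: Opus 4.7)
The plan is to establish two inclusions separately. Set $M := \max\{|t_1|,|t_2|\}$ and denote the target disc by $D^* := D\bigl((t_1+t_2)/2, (a+1/a)M\bigr)$. The outer containment $D(M) \subset D^*$ in \eqref{DsD} is immediate from the triangle inequality: for $|z| < M$ one has $|z - (t_1+t_2)/2| \leq M + |t_1+t_2|/2 \leq 2M$, while the AM--GM inequality gives $a + 1/a \geq 2$ on $(0,1]$, so $2M \leq (a+1/a)M$.

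For the main inclusion I would split according to the cases in Remark \ref{r:t1t2+-}. Consider first the set \eqref{xset} in the regime $0 \leq t_1 < t_2 = M$ (the case $t_1 < t_2 \leq 0$ reduces to it by the involution $z \mapsto -z$, $t_j \mapsto -t_j$). The crucial observation is that the angle constraint $\cos \arg z \geq 2a\sqrt{t_1 t_2}/(t_1+t_2) \geq 0$ forces $\Re z \geq 0$, so that $\Re z\,(t_1+t_2) \geq 0$. Expanding the squared distance,
\begin{equation*}
\bigl|z - (t_1+t_2)/2\bigr|^2 = |z|^2 - \Re z\,(t_1+t_2) + (t_1+t_2)^2/4 \leq |z|^2 + M^2 < M^2(1 + 1/a^2),
\end{equation*}
whence $|z - (t_1+t_2)/2| < (M/a)\sqrt{1+a^2} \leq (M/a)(1+a^2) = (a+1/a)M$, using $\sqrt{x} \leq x$ for $x \geq 1$. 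This places \eqref{xset} inside $D^*$.

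For the disc \eqref{xsetd} with $t_1 \leq 0 \leq t_2$, the straddling of the origin yields $|(t_1+t_2)/2| \leq M/2$, so for $z \in D(0, M/a)$ the triangle inequality gives $|z - (t_1+t_2)/2| < M/a + M/2$, to be compared with the target $(a+1/a)M = M/a + aM$. The principal obstacle I foresee is this final comparison $M/2 \leq aM$; I would approach it either by exploiting the ambient restriction $z \in \CC^{\overline{\up}}$ inherited from Propositions \ref{pr:gmA}--\ref{pr:gmAa}, or by refining the geometric description of the exceptional region on which those propositions provide no estimate, so that the effective radius around $(t_1+t_2)/2$ fits within $(a+1/a)M$.
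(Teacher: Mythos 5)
Your handling of the two parts that are actually provable is correct and complete. The outer inclusion $D\bigl(\max\{|t_1|,|t_2|\}\bigr)\subset D\bigl((t_1+t_2)/2,(a+1/a)M\bigr)$ with $M:=\max\{|t_1|,|t_2|\}$ follows exactly as you say from $|t_1+t_2|\leq 2M$ and $a+1/a\geq 2$. For the set \eqref{xset} with $0\leq t_1<t_2$, your key observation that the angular constraint forces $\Re z\,(t_1+t_2)\geq 0$, so that $\bigl|z-(t_1+t_2)/2\bigr|^2\leq |z|^2+(t_1+t_2)^2/4<M^2(1+1/a^2)\leq M^2(a+1/a)^2$, is precisely the elementary computation needed, and the reduction of the case $t_1<t_2\leq 0$ by reflection is legitimate. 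The paper itself gives no proof of this lemma (it is introduced as following from elementary but "not entirely obvious" considerations), so there is nothing to compare your route against.

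The obstacle you flag at the end, however, is not removable: the disc part of the lemma is false as stated when $a<1/2$. For open discs, $D(0,M/a)\subset D\bigl((t_1+t_2)/2,(a+1/a)M\bigr)$ is \emph{equivalent} to $|t_1+t_2|/2+M/a\leq aM+M/a$, i.e. to $|t_1+t_2|\leq 2aM$; and in the straddling case $t_1\leq 0\leq t_2$ the quantity $|t_1+t_2|$ can be arbitrarily close to $M$ (take $t_1=0$, $t_2=M$). Concretely, with $a=1/4$, $t_1=0$, $t_2=1$, the point $z=-3.9$ lies in $\CC^{\overline \up}\cap D(0,4)$ but $|z-1/2|=4.4>4.25=(a+1/a)\cdot 1$. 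Neither of your proposed remedies can work: the counterexample already sits on $\RR\subset\CC^{\overline \up}$, and it genuinely belongs to the exceptional region, since for $t_1<0<t_2$ (or $t_1=0$) only the condition \eqref{minM} of Proposition \ref{pr:gmA} is available and it excludes nothing inside $D(0,M/a)$. The statement would need either the extra hypothesis $a\geq 1/2$ or a larger radius such as $(1/2+1/a)M$ for the \eqref{xsetd} part. The defect is harmless for the paper, because in the only place the lemma is invoked (the proof of Theorem \ref{th:cup}) the standing assumption $t_1t_2\geq 0$ is in force, so the straddling configuration and hence the disc \eqref{xsetd} never actually arise; the portion you did prove is all that is used.
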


Следующее утверждение заполняет отмеченный в замечании \ref{r:t1t2+-} пробел. 
\begin{propos}\label{pr:eshmb} Пусть $0\leq t_1<t_2<+\infty$, $r:=(t_2-t_1)/2$ --- радиус отрезка $[t_1,t_2]\subset \RR^+$ с центром $x_0:=(t_1+t_2)/2$. Тогда для всех $z\in {\CC}^{\overline \up}$ из замкнутой верхней
полуплоскости справедливы оценки сверху 
\begin{equation}\label{es:upom} \hspace{-2mm} \omega\bigl(z,[t_1,t_2]\bigr)\leq \begin{cases}
1 \quad\text{при всех $z\in {D}(x_0,r)\cup\{t_1\}\cup \{t_2\} $},\\ 
{1}/{2} \quad\text{при всех $z\in \CC^{\up} \cap \partial {D}(x_0,r)$},\\ 
\dfrac{1}{\pi}\, \arctg \dfrac{2r|z-x_0|}{|z-x_0|^2-r^2} \quad
\text{при всех $z\notin \overline{D}(x_0,r)$,} 
\end{cases} 
\end{equation} 
которые для каждого $t\in \RR^+$ точны на каждой верхней полуокружности 
\begin{equation*}
\partial D^{\up}(x_0,t):=\bigl\{z\in
{\CC}^{\overline \up} \colon |z-x_0|=t\bigr\}, \quad D^{\up}(x_0,t):={\CC}^{\up}\cap D(x_0,t). 
\end{equation*} 
Кроме того, при всех $z \in  D^{\up}(x_0,r) $ имеют место оценки снизу \begin{equation}\label{essn_o}
\omega\bigl(z,[t_1,t_2]\bigr)\geq 1-\dfrac{1}{\pi}\, \arctg \dfrac{2r|z-x_0|}{r^2-|z-x_0|^2} \geq 1-\dfrac{1}{\pi}\,
\dfrac{2r|z-x_0|}{r^2-|z-x_0|^2} \,. \end{equation} \end{propos} \begin{proof} Верхняя оценка через $1$ в \eqref{es:upom} очевидна. Это значение при $z\in \partial D(x_0,t)^{\up}$ с $t<r$ достигается на верхней
полуокружности $\partial D^{\up}(x_0,t)$ при $z=x_0\pm t=x_0\pm|z-x_0|\in [t_1,t_2]$. При этом на 
$\CC^{\up} \cap \partial {D}(x_0,r)$ из геометрического смысла гармонической меры левая часть \eqref{es:upom} тождественно равна $1/2$. При
$z\notin \clos {D}^{\up}(x_0,t) $ с помощью сдвига $z\mapsto z-x_0$ удобно перейти к отрезку 
$ [-r,r]$, $x_0=0$, $|z-x_0|=t$  и воспользоваться леммой \ref{l:oup} с $\sin \theta =1$, когда максимум достигается. Это доказывает как последние оценки из \eqref{es:upom}, так и ее точность. Для доказательства первой оценки  снизу  \eqref{essn_o} после сдвига $z\mapsto z-x_0$, $z\in \CC^{\overline \up}$ достаточна версия равенства \eqref{eovn:a} предложения \ref{pr:gm} (ср. с леммой \ref{l:oup}) ---
\begin{lemma}\label{l:oups} Пусть\/ $r>0$ и\/ $z=te^{i\theta}$ с\/ $0<t<r$, $0\leq \theta \leq \pi$. Тогда
\begin{equation*}
\omega\bigl(z,[-r,r]\bigr)=1+\frac{1}{\pi}\arctg\frac{2rt\sin \theta}{t^2-r^2}
=1-\frac{1}{\pi}\arctg\frac{2rt\sin \theta}{r^2-t^2}, 
\end{equation*} 
где минимум при фиксированных $r>t>0$ достигается при $\sin
\theta =1$. 
\end{lemma}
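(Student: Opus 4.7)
The plan is to obtain the identity as the direct specialization of the ``interior'' case \eqref{eovn:a} of Proposition~\ref{pr:gm} with $t_1=-r$ and $t_2=r$. First I would check the geometric hypothesis: here $(t_1+t_2)/2=0$ and $(t_2-t_1)/2=r$, so \eqref{zt1t2vn} reads $|z|<r$, which is exactly the assumption $0<t<r$; equivalently, the algebraic reformulation \eqref{krvn} collapses to $|z|^2+t_1t_2=t^2-r^2<0$. Thus the interior case of Proposition~\ref{pr:gm} is applicable.

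Next I would substitute $t_1+t_2=0$, $t_1t_2=-r^2$, $t_2-t_1=2r$ and $\Im z=t\sin\theta$ into the right-hand side of \eqref{eovn:a}. The denominator $|z|^2-\Re z\,(t_1+t_2)+t_1t_2$ simplifies to $t^2-r^2$, so the formula yields
\[
\omega\bigl(z,[-r,r]\bigr)=1+\frac{1}{\pi}\arctg\frac{2rt\sin\theta}{t^2-r^2},
\]
which is the first equality. The second form is immediate from the oddness of $\arctg$ combined with $t^2-r^2=-(r^2-t^2)$, so no extra work is needed.

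For the extremal remark, I would fix $t\in(0,r)$ and treat $\sin\theta\in[0,1]$ as the free parameter. Since $r^2-t^2>0$, the fraction $\frac{2rt\sin\theta}{r^2-t^2}$ is linear and strictly increasing in $\sin\theta$, and $\arctg$ is strictly monotone on $\RR$; hence the extremal value on the upper semicircle $|z|=t$ is realized precisely at $\sin\theta=1$ (i.e.\ $z=it$), in full parallel with Lemma~\ref{l:oup}.

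I do not foresee a genuine obstacle: the statement is a direct computational corollary of Proposition~\ref{pr:gm}. The only point requiring care is to invoke the interior formula \eqref{eovn:a} rather than the exterior one \eqref{eo:a}, since here $t<r$; the resulting ``$1+$'' prefix is what forces $\omega(z,[-r,r])\to 1$ as $|z|\to r$ from inside, consistent with $[-r,r]$ subtending an almost ``full'' boundary arc from a nearby interior point.
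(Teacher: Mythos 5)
Your derivation is exactly the paper's: Lemma~\ref{l:oups} is presented there as a direct consequence of the interior formula \eqref{eovn:a} of Proposition~\ref{pr:gm} specialized to $t_1=-r$, $t_2=r$, and your check of the hypothesis \eqref{zt1t2vn}, the substitution giving the denominator $t^2-r^2$, and the sign flip via oddness of $\arctg$ are all correct. The extremality claim via strict monotonicity in $\sin\theta$ is likewise the intended (and correct) argument, so nothing is missing.
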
 
Второе неравенство в \eqref{essn_o} очевидно. 
\end{proof}

\setcounter{equation}{0} \section{Классическое выметание заряда}\label{kvm}  
Все характеристики роста (соответственно исчезания) и понятия принадлежности классу (ра-)\-с\-х\-о\-д\-и\-м\-о\-с\-ти около $\infty$
(соответственно нуля) из \S~\ref{sss:sh} для заряда $\nu\in \mathcal M(\CC)$ полностью определяются одноимёнными характеристиками и понятиями для возрастающей радиальной считающей функции $|\nu|^{\rad}$ из \eqref{df:nup}, суженной  на какой-либо луч $[r_0,+\infty)$ (соответственно интервал $(0,r_0]$), где $|\nu|$ --- полная  вариация $\nu$,  $0<r_0\in \RR^+$. 
Если $0<\type_p^{\infty}[\nu]<+\infty$ (соответственно $<+\infty$,
$=+\infty$ или $=0$), то заряд $\nu$ часто называют зарядом {\it нормальной/средней} (соответственно {\it конечной,\/} {\it
бесконечной/максимальной} или {\it нулевой/минимальной}) {\it верхней плотности при порядке\/} $p$ \cite{Levin56}, \cite{Levin96}. 

\subsection{Условие Бляшке и выметание из  полуплоскости}\label{sss:bal} 
 Характеристики роста  заряда $\nu\in \mathcal M(\CC)$ дополняет

\begin{definition}\label{c:akh} Заряд $\nu\in \mathcal M(\CC)$ удовлетворяет {\it условию Бляшке около бесконечности в верхней
полуплоскости\/} $\CC^{\up}$ (см. и ср. с \cite[гл.~II, 2]{Garnett} и с определением функций класса A из \cite[гл.~V]{Levin56}), если
\begin{equation}\label{ineq:akh} \int\limits_{\CC^{\up}\setminus D(r_0)} \Im \frac{1}{\bar z} \dd |\nu|(z)<+\infty \quad\text{для
некоторого $r_0\in \RR^+_*$.} 
\end{equation} 
\end{definition}
Из предложений \ref{pr:fcc}\eqref{i:i0}-\eqref{i:i1}
и \ref{pr:fcc0}\eqref{i:i10} сразу следует  очевидное
\begin{propos}\label{pr:2st} Если сужение $\nu\bigm|_{\CC^{\up}}$ заряда $\nu\in
\mathcal M(\CC)$ принадлежит классу сходимости при порядке $1$ около $\infty$ 
или $\ord_{\infty}[\nu]<1$, то заряд $\nu$
удовлетворяет условию Бляшке в верхней полуплоскости около $\infty$.
\end{propos}

\begin{definition}\label{df:clbal} Пусть $\nu\in \mathcal M(\CC)$. Заряд $\nu^{\bal}\in \mathcal M(\CC)$, определяемый на каждом
борелевском подмножестве $B\in \mathcal B(\CC)$ равенством \begin{equation}\label{df:bal0up} \nu^{\bal}(B):=\int_{\CC^{\up}} \omega(z,
B\cap \CC^{\up})\dd \nu(z)+ \nu\bigl(B\cap {{\CC}_{\overline \lw}}\bigr), \end{equation} называем {\it выметанием заряда\/ $\nu$ из
верхней полуплоскости\/} $\CC^{\up}$ на замкнутую нижнюю полуплоскость ${\CC}_{\overline \lw}=\CC\setminus \CC^{\up}$.
\end{definition} 
По  определению \ref{df:clbal} выметание $\nu^{\bal}$ {\it положительной меры\/} $\nu\in \mathcal M^+(\CC)$ при его существовании, т.\,е.  при условии конечности на  $B\in \mathcal B_{\rm b} (S)$, также определяет  {\it положительную меру\/} из $\mc M^+(\CC)$ с носителем $\supp \nu^{\bal}\subset  {\CC}_{\overline \lw}$.
\begin{theorem}\label{th:cup} Пусть заряд $\nu\in \mathcal
M(\CC)$ удовлетворяет условию Бляшке около бесконечности в $\CC^{\up}$, $\nu^{\overline{\up}}:=\nu\bigm|_{{\CC}^{\overline \up}}$.
Тогда определено выметание $\nu^{\bal}$ из $\CC^{\up}$ c функцией распределения $\bigl|\nu^{\bal}\bigr|^{\RR}\colon \RR\to \RR$ из
\eqref{nuR} и для любых 
\begin{equation}\label{0rt0} -\infty<t_1<t_2<+\infty,\quad x_0:=\frac{t_1+t_2}{2}\, , \;
r:=\frac{t_2-t_1}{2}\,, \quad a\in (0,1)\subset \RR^{+} 
\end{equation} при $t_1t_2\geq 0$ имеет место оценка
\begin{multline}\label{nu:estTos} 0\leq \bigl|\nu^{\bal}\bigr|^{\RR}(t_2)-\bigl|\nu^{\bal}\bigr|^{\RR}(t_1)\leq
|\nu^{\overline{\up}}|(x_0,r) +\frac{2r}{a|x_0|} \bigl|\nu^{\overline{\up}}\bigr|^{\rad}\Bigl(\,\frac{3}{a}|x_0|\Bigr) \\
+\frac{r}{(1-a)^2}\int\limits_{|z|\geq |x_0|}\Bigl|\Im \frac{1}{z}\Bigr|\dd |\nu^{\overline{\up}}|(z)
+r\biggl(\int_r^{a|x_0|}\frac{|\nu^{\overline{\up}}|(x_0,t)}{t^2}\dd t\biggr)^+. \end{multline} 
\end{theorem}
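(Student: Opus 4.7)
The plan is to expand the increment via the balayage representation as an integral of the harmonic measure against $d|\nu^{\overline\up}|$ on $\CC^{\overline\up}$, and then to bound this integral by covering $\CC^{\overline\up}$ with four regions on which distinct estimates from \S\ref{gm} apply. By Definition \ref{df:clbal}, the Ahlfors condition \eqref{ineq:akh} (whose finiteness sufficient for convergence is ensured via Proposition \ref{pr:2st}), and the identity $\omega(x,\cdot)=\delta_x$ for $x\in\RR$, one writes
\[
0 \leq |\nu^{\bal}|^{\RR}(t_2)-|\nu^{\bal}|^{\RR}(t_1) = |\nu^{\bal}|\bigl([t_1,t_2]\bigr) \leq \int_{\CC^{\overline\up}}\omega\bigl(z,[t_1,t_2]\bigr)\,\dd|\nu^{\overline\up}|(z),
\]
with the lower bound coming from the positivity of harmonic measure.

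Fix $a\in(0,1)$ and cover $\CC^{\overline\up}$ by $E_0 := \overline{D}(x_0,r)\cap\CC^{\overline\up}$, $E_{\rm ann} := \{r<|z-x_0|\leq a|x_0|\}$, $E_\infty := \{a|z|\geq |x_0|+r\}$, and a residual region $E_{\rm mid}$. The hypothesis $t_1t_2\geq 0$ yields $\max\{|t_1|,|t_2|\} = |x_0|+r$ (in both sign cases), so $E_\infty$ is precisely the region where \eqref{afminAM} of Proposition \ref{pr:gmA} applies, while points of $E_{\rm mid}$ satisfy $|z-x_0|>a|x_0|$ together with $|z|\leq(|x_0|+r)/a\leq 2|x_0|/a$, hence also $|z-x_0|\leq 3|x_0|/a$.

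On $E_0$ the trivial bound $\omega\leq 1$ yields the first target term $|\nu^{\overline\up}|(x_0,r)$. On $E_\infty$, \eqref{afminAM} gives $\omega \leq \frac{2r}{\pi(1-a)^2}|\Im(1/z)|$, and integrating over $E_\infty\subset\{|z|\geq|x_0|\}$ with $2/\pi<1$ delivers the third term. On $E_{\rm mid}$, estimate \eqref{es:upom} combined with $(a|x_0|)^2-r^2\geq (a|x_0|)^2/2$ (true when $a|x_0|\geq\sqrt{2}\,r$; in the degenerate range $a|x_0|<r$ the set $E_{\rm ann}$ is empty and the fourth target term vanishes through the $(\cdot)^+$, with an analogous elementary treatment in the intermediate range) produces $\omega\leq 4r/(\pi a|x_0|)$; bounding $|\nu^{\overline\up}|(E_{\rm mid})\leq |\nu^{\overline\up}|^{\rad}(3|x_0|/a)$ and absorbing $4/\pi<2$ yields the second term.

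The principal work, and the main obstacle, is the contribution from $E_{\rm ann}$. Parametrising by $t:=|z-x_0|$ and applying \eqref{es:upom},
\[
\int_{E_{\rm ann}}\omega\,\dd|\nu^{\overline\up}| \;\leq\; \int_{(r,a|x_0|]}\frac{2rt}{\pi(t^2-r^2)}\,\dd g(t), \qquad g(t) := |\nu^{\overline\up}|(x_0,t);
\]
Stieltjes integration by parts transforms the right-hand side into a boundary contribution at $t=a|x_0|$ of order $r g(a|x_0|)/(a|x_0|)$ which, via $g(a|x_0|)\leq |\nu^{\overline\up}|^{\rad}(3|x_0|/a)$, is absorbed into the second target term, plus a volume integral of order $r\int_r^{a|x_0|} g(t)/t^2\,\dd t$ matching the fourth term. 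The delicate point is the simple pole of the weight $\frac{2rt}{\pi(t^2-r^2)}$ at $t=r$: the formal lower boundary term in the IBP is of indeterminate form $\infty\cdot g(r^+)$. I would resolve this either by working with the centred function $g(t)-g(r)$, which vanishes at $r$, or by first performing the IBP on $(r+\varepsilon,a|x_0|]$ and letting $\varepsilon\downarrow 0$, with the sub-annulus $\{r<|z-x_0|\leq r+\varepsilon\}$ controlled by $\omega\leq 1$ and absorbed into term 1; in either variant the $g(r)=|\nu^{\overline\up}|(x_0,r)$ part is already counted in the first term (since $\overline{D}(x_0,r)=E_0$), and the $(\cdot)^+$ in the fourth term handles any residual signed lower-endpoint contribution.
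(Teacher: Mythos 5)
Your overall strategy is the paper's: reduce the increment to $\int\omega\bigl(z,[t_1,t_2]\bigr)\,\dd|\nu^{\overline\up}|$, split $\CC^{\overline\up}$ into a core disc (trivial bound $\omega\leq1$), a far region handled by \eqref{afminAM}, an intermediate region absorbed into the $|\nu^{\overline\up}|^{\rad}(3|x_0|/a)$ term, and an annulus handled by \eqref{es:upom} plus a radial Stieltjes integration by parts. The decompositions differ only cosmetically (the paper splits $\nu$ into the parts inside and outside $D\bigl(x_0,(a+1/a)T\bigr)$ and uses Remark \ref{r:t1t2+-} and Lemma \ref{prxset} to cover the angular exceptional set via Proposition \ref{pr:gmAa}; your condition $a|z|\geq|x_0|+r$ invokes \eqref{minM} directly and pushes the difference into $E_{\rm mid}$).

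The genuine gap is in the $E_{\rm ann}$ step. By applying $\arctg x\leq x$ \emph{before} integrating against $\dd g$, you replace the bounded weight $\frac1\pi\arctg\frac{2rt}{t^2-r^2}$ (which tends to $\frac12$ as $t\downarrow r$) by $\frac{2rt}{\pi(t^2-r^2)}$, which has a pole at $t=r$; the right-hand side of your displayed inequality can already equal $+\infty$ (e.g.\ if $g$ has atoms at $r+2^{-n}$ of mass $2^{-n}$), and the differentiated weight behaves like $(t-r)^{-2}$, which is not integrable. Neither proposed repair closes this: centring at $g(r)$ only gives $g(t)-g(r)\to0$, with no rate to beat $(t-r)^{-2}$, and in the $\varepsilon$-truncation the term $-w(r+\varepsilon)g(r+\varepsilon)$ and the volume integral each diverge, with no usable upper bound surviving the cancellation unless $g$ is constant near $r$. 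The correct move, which is exactly what the paper does, is to integrate by parts the arctangent weight itself: $\frac{\dd}{\dd t}\Bigl(\frac1\pi\arctg\frac{2rt}{t^2-r^2}\Bigr)=-\frac{2r}{\pi(t^2+r^2)}$, so the lower boundary term is the harmless $-\frac12|\nu^{\overline\up}|(x_0,r)$, the volume term is already $\frac{2r}{\pi}\int\frac{g(t)}{t^2+r^2}\dd t\leq r\int\frac{g(t)}{t^2}\dd t$, and $\arctg x\leq x$ is needed only for the upper boundary term. A secondary point: as written, your $E_{\rm mid}$ contribution and the upper boundary term of the annulus each cost up to $\frac{4r}{\pi a|x_0|}\,|\nu^{\overline\up}|^{\rad}(3|x_0|/a)$, and $\frac8\pi>2$, so the constant in the second term of \eqref{nu:estTos} does not quite close without tightening one of the two estimates (the arctangent IBP also removes this pressure, since its upper boundary term is bounded by $\frac{2r}{\pi T}$ as in \eqref{nu:estT}).
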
 
\begin{proof} Используем обозначение и соотношения 
\begin{equation}\label{{0rt0}c} 0<|x_0|\leq T:=\max\bigl\{|t_1|, |t_2|\bigr\} \leq 2|x_0|, \quad t_1t_2\geq 0. 
\end{equation} 
Не умаляя общности, можно считать, что $\supp \nu \subset {\CC}^{\overline
\up}$, т.\,е. $\nu^{\overline{\up}}{=}\nu$. Докажем сначала в обозначениях \eqref{0rt0} и \eqref{{0rt0}c} оценку
\begin{multline}\label{nu:estT} \bigl|\nu^{\bal}\bigr|^{\RR}(t_2)-\bigl|\nu^{\bal}\bigr|^{\RR}(t_1)\leq |\nu|(x_0,r) +\frac{2r}{\pi
T}|\nu|\bigl(x_0,(a+1/a)T\bigr) \\ +\frac{2r}{\pi (1-a)^2}\int\limits_{\CC\setminus D(T) }\Bigl|\Im \frac{1}{z}\Bigr|\dd |\nu|(z)
+\frac{2r}{\pi}\int\limits_r^{(a+1/a)T}\frac{|\nu|(x_0,t)}{t^2+r^2}\dd t. \end{multline} Доказательство оценки \eqref{nu:estT} и будет
означать, что определено выметание $\nu^{\bal}$ из $\CC^{\up}$. Объединения множества \eqref{xset} с кругом \eqref{xsetd} из замечания \ref{r:t1t2+-} по лемме  \ref{prxset} содержится в б\'ольшем круге из \eqref{DsD}, совпадающем в обозначениях \eqref{{0rt0}c} с кругом $D\bigl(x_0, (a+1/a)T\bigr)$. Следовательно, по предложениям \ref{pr:gmA} с оценкой \eqref{afminAM} и \ref{pr:gmAa} с оценкой
\eqref{afminAMaa} получаем 
\begin{equation}\label{afminAMg} \omega\bigl(z,[t_1,t_2]\bigr)\overset{\eqref{afminAM},
\eqref{afminAMaa}}{\leq} \frac{t_2-t_1}{\pi (1-a)^2}\, \Im \frac{1}{\bar z}\overset{\eqref{0rt0}}{=} \frac{2r}{\pi (1-a)^2}\, \Im
\frac{1}{\bar z} \end{equation} для всех точек $z\notin D\bigl(x_0, (a+1/a)T\bigr)$. Пусть \begin{equation}\label{nuiCD}
\nu_{\infty}:=\nu\bigm|_{\CC\setminus D\bigl(x_0, (a+1/a)T\bigr)} \end{equation} --- сужение заряда $\nu$ на внешность круга
$D\bigl(x_0, (a+1/a)T\bigr)$, включающего в себя по лемме \ref{prxset} меньший круг $D(T)$ из \eqref{DsD}. Интегрируя обе части
неравенства \eqref{afminAMg} по полной вариации $|\nu_{\infty}|\leq |\nu|$, сосредоточенной по определению \eqref{nuiCD} вне круга $
D\bigl(x_0, (a+1/a)T\bigr)\supset D(T)$, имеем \begin{equation}\label{nuinfty}
\bigl|(\nu_{\infty})^{\bal}\bigr|\bigl([t_1,t_2]\bigr)\overset{\eqref{afminAMg}}{\leq} \frac{2r}{\pi (1-a)^2}\int\limits_{\CC\setminus
D(T)}\Bigl|\Im \frac{1}{z}\Bigr|\dd |\nu|(z). \end{equation} Для остальной части $\nu_0\overset{\eqref{nuiCD}}{:=}\nu-\nu_{\infty}$
заряда $\nu$ с полной вариацией $|\nu_0|\leq |\nu|$ из оценки \eqref{es:upom} предложения \ref{pr:eshmb} и определения \ref{df:clbal}
следует \begin{multline}\label{nu0btt} \bigl|(\nu_{0})^{\bal}\bigr|\bigl([t_1,t_2]\bigr) \overset{ \eqref{df:bal0up}}{\leq}
\int\limits_{D\bigl(x_0,(a+1/a)T\bigr)}\omega \bigl(z,[t_1,t_2]\bigr) \dd|\nu|(z) \overset{\eqref{es:upom}}{\leq} |\nu|\bigl(\overline
D(x_0,r)\bigr) 
\\+\int\limits_{D(x_0, (a+1/a)T)\setminus D(x_0,r)} \dfrac{1}{\pi} \arctg \dfrac{2r|z-x_0|}{|z-x_0|^2-r^2}\dd|\nu|(z). \end{multline}
Последний интеграл здесь равен 
\begin{multline*} 
\int\limits_{r}^{(a+1/a)T} \dfrac{1}{\pi} \arctg \dfrac{2rt}{t^2-r^2}\dd
|\nu|(x_0,t)=\Bigl|\text{интегрирование по частям}\Bigr| \\=\frac{1}{\pi}\Bigl(\arctg
\frac{2r(a+1/a)T}{(a+1/a)^2T^2-r^2}\Bigr)|\nu|\bigl(x_0,(a+1/a)T\bigr) -\frac{1}{2}|\nu|(x_0,r)
\\+\frac{2r}{\pi}\int\limits_{r}^{(a+1/a)T} \frac{|\nu|(x_0,t)}{t^2+r^2}\dd t \leq \frac{2r}{\pi T}|\nu|\bigl(x_0,(a+1/a)T\bigr)
 + \frac{2r}{\pi}\int\limits_{r}^{(a+1/a)T} \frac{|\nu|(x_0,t)}{t^2+r^2}\dd t. 
\end{multline*} 
Отсюда, продолжая оценку \eqref{nu0btt} и учитывая \eqref{nuinfty}, ввиду равенства
$\nu=\nu_0+\nu_{\infty}$ и неравенства $|\nu|\leq |\nu_0|+|\nu_{\infty}|$ получаем требуемую в \eqref{nu:estT} оценку.

При $t_1t_2\geq 0$, не умаляя общности, можем считать, что $ t_1\geq 0$. 
Рассмотрим сначала более деликатный случай  $r\leq a|x_0|$. Для этого разобьём последний интеграл в правой части
\eqref{nu:estT} на сумму интегралов \begin{multline}\label{dl_tz}
\biggl(\int_r^{a|x_0|}+\int_{a|x_0|}^{(a+1/a)T}\biggr)\frac{|\nu|(x_0,t)}{t^2+r^2}\dd t \\ \leq
\int_r^{a|x_0|}\frac{|\nu|(x_0,t)}{t^2+r^2}\dd t+ \int_{a|x_0|}^{(a+1/a)T}\frac{|\nu^{\overline{\up}}|(x_0,t)}{t^2}\dd t \\\leq
\int_r^{a|x_0|}\frac{|\nu|(x_0,t)}{t^2+r^2}\dd t+ |\nu|\bigl(x_0,(a+1/a)T\bigr)\,\Bigl(\frac{1}{a|x_0|}-\frac{1}{(a+1/a)T}\Bigr) \\
\leq \int_r^{a|x_0|}\frac{|\nu|(x_0,t)}{t^2}\dd t + |\nu|^{\rad}\bigl((3/a)|x_0|\bigr)\frac{1}{a|x_0|}\, , \end{multline} 
поскольку
$$|\nu|\bigl(x_0,(a+1/a)T\bigr)\leq |\nu|^{\rad}\bigl((1+a+1/a)T\bigr)\overset{\eqref{0rt0}}{\leq} |\nu|^{\rad}\bigl((3/a)T\bigr).$$
 Отсюда согласно \eqref{dl_tz} из \eqref{nu:estT} получаем \eqref{nu:estTos}. 

Пусть теперь по-прежнему $t_1\geq 0$ и уже  $r>a|x_0|$. Тогда последний интеграл в правой части \eqref{nu:estT} оценивается сверху лишь вторым интегралом из левой части 
\eqref{dl_tz}. Продолжение этой оценки  так же, как и в \eqref{dl_tz},  дает оценку сверху через второе слагаемое в правой части \eqref{dl_tz}. 
Таким образом,  из \eqref{nu:estT} снова получаем \eqref{nu:estTos}. 
 \end{proof}
\begin{corollary}\label{th:b0+} Пусть $\nu \in \mathcal{M}^+(\CC)$. Следующие три утверждения попарно эквивалентны:
\begin{enumerate}[{\rm (b1)}] \item\label{b1} существует выметание $\nu^{\bal}\in \mathcal{M}^+(\CC)$ из $\CC^{\up}$, \item\label{b2}
для некоторого подмножества $B\in \mathcal B_{\rm b} (\RR)$ с $\lambda_{\RR}(B)>0$ \begin{equation}\label{e:lot} \int_{\CC^{\up}}
\omega (z, B )\dd \nu(z)<+\infty , \end{equation} \item\label{b3} мера $\nu$ удовлетворяет условию Бляшке \eqref{ineq:akh} около
$\infty$ в $\CC^{\up}$. \end{enumerate} \end{corollary} \begin{proof} Импликация (b\ref{b3})$\Rightarrow$(b\ref{b1}) --- частный
случай теоремы \ref{th:cup}. Импликация (b\ref{b1})$\Rightarrow$(b\ref{b2}) --- очевидное следствие равенства \eqref{df:bal0up}
определения \ref{df:clbal}. Из (b\ref{b2}) следует существование чисел $-\infty<t_1<t_2<+\infty$, для которых
\begin{equation}\label{e:loB} \int_{\CC^{\up}} \omega\bigl(z, (t_1,t_2)\bigr)\dd \nu(z)\overset{\eqref{e:lot}}{<}+\infty
\end{equation} Положим $a=1/2$ и $T:=\max\bigl\{|t_1|,|t_2|\bigr\}$. Тогда из оценки снизу \eqref{afminAM-} предложения \ref{pr:gmA+}
следует \begin{equation*}
\nu(z)\leq \int_{\CC^{\up}} \omega\bigl(z, (t_1,t_2)\bigr)\dd \nu(z)\overset{\eqref{e:loB}}{<}+\infty, \end{equation*} т.\,е.
выполнено \eqref{ineq:akh} при $r_0=2T$, и из (b\ref{b2}) выведено (b\ref{b3}). \end{proof} \begin{remark} Условие
$\lambda_{\RR}(B)>0$ на $B\in \mathcal B_{\rm b} (\RR)$ можно ослабить. Достаточно предполагать, что (b\ref{b2}) с \eqref{e:lot}
выполнено для некоторого множества $B$ ненулевой логарифмической емкости, т.\,е. неполярного $B$ \cite{Rans}. \end{remark}

\begin{corollary}\label{cor_L} Пусть $\nu\in \mc M (\CC)$ удовлетворяет условию Бляшке около $\infty$ в $\CC^{\up}$ и
$({\nu}^{\bal})^{\RR}$ --- функция распределения на $\RR$ выметания $\nu^{\bal}$. Тогда \begin{enumerate}[{\rm (i)}] \item\label{Li}
если $[x_1,x_2]\cap \supp \nu=\varnothing$ для отрезка $[x_1,x_2] \subset \RR_*$, то функция распределения
$({\nu}^{\bal})^{\RR}$ удовлетворяет условию Липшица на $[x_1,x_2]$; \item\label{Lii} если заряд $\nu$ конечного типа при порядке
$p\in\RR^+$ асимптотически отделён от $\RR$ в $\CC^{\overline{\up}}$ в том смысле, что \begin{equation}\label{der:R}
\liminf_{\substack{z\to \infty\\z\in \CC^{\overline{\up}}\cap \supp \nu}} \frac{\Im z}{|z|}>0, \end{equation} то найдутся числа $b>0$
и $r_0>0$, для которых в обозначениях \eqref{0rt0} при любых $|x_0|\geq r_0$ из $|t_2-t_1|\leq {|x_0|}/{2}$ следует неравенство
\begin{equation}\label{der:Res} 
\begin{split}
\bigl|(\nu^{\bal})^{\RR}(t_2)-(\nu^{\bal})^{\RR}(t_1)\bigr| \leq b|t_2-t_1||x_0|^{p-1} \\
\text{при всех $|t_2-t_1|\leq \frac{|x_0|}{2}$ и  $|x_0|\geq r_0$.} 
\end{split}
\end{equation} 
\end{enumerate} 
\end{corollary} 
\begin{proof} По определению \ref{df:clbal} можно считать, что заряд $\nu=\nu^{\overline{\up}}$, т.\,е. сосредоточен в $\CC^{\overline{\up}}$.
Используем обозначения \eqref{0rt0}.
	
\eqref{Li} Достаточно рассмотреть случай $0<r_0\leq x_1<x_2<+\infty$. По условию можно подобрать $a\in (0,1)$ так, что $\overline
D\bigl(x, a|x|\bigr)\cap \supp \nu =\varnothing$ для всех точек $x\in [x_1, x_2]$. Тогда для любого отрезка $[t_1,t_2]\subset [x_1,
x_2]$ с $x_0=(t_1+t_2)/2$ оценка \eqref{nu:estTos} теоремы~\ref{th:cup} упрощается до вида 
\begin{multline}\label{nu:estTo+}
\bigl|\nu^{\bal}\bigr|^{\RR}(t_2)-\bigl|\nu^{\bal}\bigr|^{\RR}(t_1) \\ \leq \frac{t_2-t_1}{2}\biggl( \sup_{x\in [x_1,x_2]} \frac{2|\nu
|^{\rad}\bigl(\,\frac{6}{a}|x|\bigr)}{|x|} +\frac{1}{(1-a)^2}\int\limits_{|z|\geq |x_0|}\Bigl|\Im \frac{1}{z}\Bigr|\dd |\nu|(z)\biggr).
\end{multline}

\eqref{Lii}  Условие \eqref{der:R} позволяет выбрать $a>0$ в неравенстве \eqref{nu:estTo+}  не зависящим от  $x_1:=t_1$, $ x_2:=t_2$ при достаточно большом $r_0\leq |x_0|$. 
 Для получения \eqref{der:Res} из \eqref{nu:estTo+}
  при $p<1$ в \eqref{nu:estTo+} 
используем неравенства
 \begin{multline}\label{Impm1}
\int\limits_{|z|\geq |x_0|}\Bigl|\Im \frac{1}{z}\Bigr|\dd |\nu|(z)\leq \int_{|x_0|}^{+\infty} \frac{\dd \nu^{\rad}(t)}{t} 
\\
\leq \int_{|x_0|}^{+\infty} \frac{\nu^{\rad}(t)}{t^2} \dd t\leq \const_{\nu} |x_0|^{p-1} .
\end{multline}
При  $p\geq 1$   по  условию Бляшке из оценки \eqref{nu:estTo+} следует \eqref{der:Res}. 
\end{proof}

\begin{propos}\label{cor:ges} Пусть заряд $\nu\in \mc M(\CC)$ удовлетворяет условию Бля\-шке \eqref{ineq:akh}. Если функция $g\colon
\RR^+\to \RR^+$ при всех $r>0$ удовлетворяет неравенству $g(r)>r$, то при любых $r>0$ \begin{equation}\label{es:grbR}
|\nu^{\bal}|^{\rad}(r) \leq |\nu|^{\rad}\bigl(g(r)\bigr)+\frac{2rg^2(r)}{\pi (g(r)-r)^2} \int\limits_{|z|\geq g(r)} \Bigl|\Im \frac{1}{z}\Bigr| \dd |\nu| (z).
\end{equation} 
Если $\nu$ конечного типа при порядке\/ $p\in \RR^+$ около $\infty$, то и $\nu^{\bal}$  конечного типа при порядке\/ $p$ около $\infty$. При $p\geq 1$ имеет место и точная оценка \begin{equation}\label{e:dnuR}
\limsup_{0\leq r\to+\infty}\frac{|\nu^{\bal}|^{\rad}(r)}{r^p} \leq \limsup_{0\leq r\to+\infty}\frac{|\nu|^{\rad}(r)}{r^p}\,.
\end{equation} \end{propos} \begin{proof} При фиксированном $r>0$ положим $a:=r/g(r)<1$. По определению \ref{df:clbal} из равенства
\eqref{df:bal0up} получаем \begin{multline}\label{df:bal0up+} |\nu^{\bal}|^{\rad}(r)\leq \int\limits_{\overline D(g(r))}
\omega\bigl(z, \overline D(r)\bigr)\dd |\nu| (z) +|\nu|\bigl(\overline D(r)\cap {{\CC}_{\overline \lw}}\bigr) \\+
\int\limits_{\CC^{\up}\setminus \overline D(g(r))} \omega\bigl(z, \overline D(r)\bigr)\dd |\nu|(z) \\ \overset{\eqref{afminAM}}{\leq}
|\nu| \bigl(\overline D(g(r))\bigr) +\frac{2r}{\pi (1-a)^2}\,\int\limits_{\CC^{\up}\setminus \overline D(g(r))} \Im \frac{1}{\bar z}
\dd |\nu|(z), \end{multline} где использованы очевидное неравенство $\omega (z,\overline D(r))\leq 1$ и предложение \ref{pr:gmA} при $t_1:=-r$ и $t_2:=r$   при условии \eqref{minM}. Подстановка $a=r/g(r)$ даёт неравенство \eqref{es:grbR}. Если $\type_p^{\infty}[\nu]<+\infty$,
то при $g(r)\equiv (1+\e)r$ с числом $\e>0$ из \eqref{es:grbR} получаем \begin{equation}\label{imnup}
\frac{|\nu^{\bal}|^{\rad}(r)}{r^p} \leq \frac{\nu|^{\rad}\bigl((1+\e)r\bigr)}{r^p}+\frac{2r(1+\e)^2}{\pi r^p{\e}^2}
\int\limits_{|z|\geq (1+\e)r} \Bigl|\Im \frac{1}{z}\Bigr| \dd |\nu| (z). \end{equation} В случае $p\geq 1$ последнее слагаемое здесь
--- величина порядка $o(1)$ при $r\to +\infty$, что даёт при $0<\e\to 0$ оценку \eqref{e:dnuR}, которая переходит в равенство при
$\supp \nu\subset \RR$. При $\type_p^{\infty}[\nu]<+\infty$ с $p<1$ из \eqref{imnup} с $\e=1/2$, применяя \eqref{Impm1}, получаем
$\type_p^{\infty}[\nu^{\bal}]<+\infty$. \end{proof}

Следующий результат был ранее доказан только  для положительных мер $\nu \in \mathcal M(\CC)$, но в наших определениях он легко переносится и на заряды. 
\begin{propos}[{\rm \cite[лемма 3.1]{Kha91}, детально в \cite[лемма
1.4.1]{KhDD92}}]\label{Lv:vpp} Пусть $\nu\in \mc M(\CC)$ --- заряд конечного типа при порядке $p\geq 1$, удовлетворяющий условию
Бляшке \eqref{ineq:akh}, $\supp \nu\subset \CC^{\overline{\up}}$. Тогда для любого $r_0\in \RR_*^+$ \begin{equation}\label{Lcob}
\biggl|\, \int\limits_{r_0<|z|\leq r} \frac{1}{z^p} \dd \, \bigl( \nu^{\bal}(z)-\nu (z)\bigr)\biggl|=O(1) \quad\text{при $r\to
+\infty$}, \end{equation} где для многозначной аналитической функции $z\mapsto z^p$ выбрана какая-либо её аналитическая ветвь в  $\CC^{\overline{\up}}_*$. 
\end{propos}

\subsection{ Выметание заряда на систему лучей}\label{bal_S_0} Значительная часть содержания этого п.~\ref{bal_S_0} в некоторой
степени развивает и обобщает результаты из \cite{Kha91} и \cite[гл.~I]{KhDD92}. В данном п.~\ref{bal_S_0} всюду $S$ --- множество
лучей на $\CC$ с началом в нуле. Далее такое множество лучей $S$ называем {\it системой лучей. \/} 

\subsubsection{Система лучей}\label{Sray} 
Систему лучей $S$ рассматриваем одновременно и как {\it точечное подмножество в\/} $\CC$, т.\,е. как множество всех точек на лучах, образующих систему $S$. В частности, система лучей $S$ {\it замкнутая,\/} если $S$ как точечное подмножество замкнуто в $\CC$. Далее всюду, часто не оговаривая специально, {\it рассматриваем только
непустые замкнутые системы лучей.\/} Связные компоненты дополнения $\CC\setminus S$ замкнутой системы лучей $S$ называем {\it углами,
дополнительными к $S$.\/} И наоборот, точечное множество, образующее замкнутый конус в $\CC$ над $\RR^+$, рассматриваем и как замкнутую систему лучей.

Для  точки $z\in \CC_*$, в отличие от договорённости перед предложением \ref{pr:gmAa}, значения ее аргументов $\Arg z\in\RR$ однозначны лишь с точностью до слагаемого, кратного $2\pi$, но $\Arg 0:=\RR$.  Для пары  $\alpha, \beta\in \RR$ 
полагаем $\angle\,
[\alpha,\beta]:=\bigl\{z\in \CC\colon [\alpha, \beta] \cap \Arg z\neq \varnothing\bigr\}$, 
$\angle (\alpha,\beta){:=}\{0\neq z\in \CC\colon (\alpha, \beta) \cap \Arg z\neq \varnothing\}$
--- соответственно {\it замкнутый угол\/} (с вершиной в нуле при\/
$\alpha \leq \beta$) и {\it открытый угол\/} --- оба {\it раствора\/} $\beta-\alpha\in \RR^+$ при $\alpha\leq \beta$. В  записи углов $\angle  (\alpha,\beta)$, дополнительных к системе лучей $S$, естественно предполагать $\beta\leq \alpha +2\pi$.

\subsubsection{Гармоническая мера для дополнения системы лучей}\label{df:harmm}
{\it Редукцией угла\/} $\angle (\alpha, \beta)$ ненулевого раствора {\it к верхней полуплоскости\/} называем конформную замену
переменной $z':=(ze^{-i\alpha})^{\frac{\pi}{\beta-\alpha}}$, где $z\in \angle\,[\alpha, \beta]$ 
рассматривается ветвь функции $z\mapsto z^{\frac{\pi}{\beta-\alpha}}$, положительная на положительной полуоси $\RR^+$. Пусть $z\in
\angle (\alpha, \beta)$ и $B'$ --- образ пересечения $B\cap \partial \angle\,[\alpha, \beta]\in \mathcal B(\CC)$ при редукции угла
$\angle (\alpha, \beta)$ к верхней полуплоскости. Тогда, в силу конформной инвариантности гармонической меры, для угла $\angle (\alpha, \beta)$ 
\begin{equation}\label{ioang} \omega\bigl(z,B;\angle (\alpha,
\beta)\bigr)\overset{\eqref{df:o}}{=}\omega(z',B')\overset{\eqref{seom:a}}{=} \int_{B'} \frac{1}{\pi}\Im \frac{1}{t-z} \dd
\lambda_{\RR}(t). \end{equation}
 
\begin{propos}\label{aalb} Пусть $a\in (0,1)$. При условии $a|z|\geq r$ имеем  \begin{equation}\label{aalbaa}
\omega\bigl(z,\overline{D}(r);\angle (\alpha, \beta)\bigr)\leq\frac{2r^{\frac{\pi}{\beta-\alpha}}}{\pi
(1-a^{\frac{\pi}{\beta-\alpha}})^2} \left(-\Im \frac{1}{(ze^{-i\alpha})^{\frac{\pi}{\beta-\alpha}}}\right), 
\end{equation} 
а при условии $ar\geq |z|$ справедлива  оценка 
\begin{equation}\label{aalbaad} 
\omega\bigl(z,\CC\setminus D(r);\angle (\alpha,
\beta)\bigr)\leq \frac{2r^{-\frac{\pi}{\beta-\alpha}}}{\pi \bigl(1-a^{\frac{\pi}{\beta-\alpha}}\bigr)^2} \Im\, (ze^{-i\alpha})^{\frac{\pi}{\beta-\alpha}}. 
\end{equation} 
\end{propos}
 \begin{proof} Из \eqref{ioang} для круга $B=\overline{D}(r)$
имеем \begin{equation*} \omega\bigl(z,\overline{D}(r);\angle (\alpha, \beta)\bigr)=
\omega\bigl((ze^{-i\alpha})^{\frac{\pi}{\beta-\alpha}}, [-r^{\frac{\pi}{\beta-\alpha}},r^{\frac{\pi}{\beta-\alpha}}] \bigr),
\end{equation*} Отсюда по предложению \ref{pr:gmA} при $r:=t_2=-t_1>0$ и условии $a|z|\geq r$, соответствующем \eqref{minM}, из  \eqref{afminAM} после редукции угла $\angle (\alpha, \beta)$ к $\CC^{\up}$ сразу следует \eqref{aalbaa}. Инверсия
$\star \colon z\mapsto z^{\star}:= 1/\bar{z}$ по всем $z\in \angle\,[\alpha, \beta]$ относительно единичной окружности $\partial \DD$ сохраняет, как
конформное отображение, рассматриваемую гармоническую меру и дает оценку \eqref{aalbaad}. \end{proof}

\begin{definition}[{\cite[определение 1.2]{Kha91}, \cite[определение 1.1.1]{KhDD92}}]\label{seom:aS} Пусть $S$ --- замкнутая система
лучей. {\it Гармонической мерой множества\/ $B\in \mathcal B(\CC)$ в точке\/ $z\in \CC$ для множества\/}-дополнения $\CC\setminus S$
называем функцию \begin{equation}\label{seom:aSf} (z,B)\mapsto \omega (z,B; \CC\setminus S)\in [0,1], \quad z\in \CC, \quad B\in
\mathcal B(\CC), \end{equation} равную гармонической мере $\omega\bigl(z,B;\angle (\alpha, \beta)\bigr)$ в точках $z$ из  дополнительных к $S$ углов $\angle (\alpha, \beta)$, и равную мере Дирака $\delta_z(B)$ от $B$ при $z\in S$. \end{definition}
Определение \ref{seom:aS} с \eqref{seom:aSf} согласованы с определением гармонической меры для $\CC^{\up}$ из п.~\ref{p_3_1} и
обозначением \eqref{seom:a}, где замкнутая система лучей $S$ образована всеми лучами с началом в нуле из ${\CC}_{\overline \lw}$ и
$\CC\setminus S=\CC\setminus {\CC}_{\overline \lw}=\CC^{\up}$.

\subsubsection{Выметание заряда на систему лучей и условие Бляшке}\label{bsrcB} Редукция угла $\angle (\alpha, \beta)$ к $\CC^{\up}$
из п. \ref{df:harmm} позволяет обобщить определение \ref{df:clbal} как \begin{definition}\label{df:clbala} Пусть $<-\infty<\alpha
<\beta \leq\alpha+ 2\pi<+\infty$ и $\nu\in \mathcal M(\CC)$. Заряд $\nu^{\bal}_{\CC\setminus \angle (\alpha, \beta)}\in \mathcal M(\CC)$, определяемый на каждом  $B\in \mathcal B(\CC)$ равенством \begin{multline}\label{df:bal0upa}
\nu^{\bal}_{\CC\setminus \angle (\alpha, \beta)}(B)\overset{\eqref{ioang}}{:=}\int_{\angle (\alpha, \beta)}
\omega\bigl(z,B;\angle (\alpha, \beta)\bigr)\dd \nu(z) \\ +\nu\bigl(B\cap {(\CC\setminus \angle (\alpha, \beta))}\bigr),
\end{multline} называем {\it выметанием заряда\/ $\nu$ из угла\/} $\angle (\alpha, \beta)$ на дополнение этого угла $\CC\setminus
\angle (\alpha, \beta)$, или систему лучей $\CC\setminus \angle (\alpha, \beta)$. \end{definition}

\begin{definition}[{\cite[определение 1.3]{Kha91}, \cite[определение 1.1.2]{KhDD92}}]\label{df:clbalS} Пусть $S$ --- замкнутая система
лучей на $\CC$, $\nu\in \mathcal M(\CC)$. Заряд $\nu_S^{\bal}\in \mathcal M(\CC)$, определяемый на каждом борелевском множестве $B\in
\mathcal B(\CC)$ равенством \begin{equation}\label{df:bal0upS} \nu^{\bal}_S(B):=\int_{\CC} \omega(z, B; \CC\setminus S)\dd \nu(z),
\end{equation} называем {\it выметанием заряда\/ $\nu$ на\/} $S$, или {\it из\/} $\CC\setminus S$, где в  $\nu_S^{\bal}$ нижний
индекс $S$ будем опускать, если  ясно, какая система лучей $S$ рассматривается. \end{definition} Определения
\ref{df:clbal}, \ref{df:clbala} и \ref{df:clbalS} согласованы. Так, выметание из верхней полуплоскости $\CC^{\up}$ на замкнутую нижнюю
полуплоскость ${\CC}_{\overline \lw}$ --- это выметание на замкнутую систему лучей $S$, образованную всеми лучами с началом в нуле из
${\CC}_{\overline \lw}$. Поскольку множество дополнительных к $S$ углов не более чем счётно, из определений
\ref{df:clbal}--\ref{df:clbalS} сразу следуют свойства \begin{enumerate}[{\rm (s1)}] \item\label{s1} {\it если для каждого из зарядов
$\nu_1,\nu_2\in \mathcal M(\CC)$ существуют выметания $\nu_1^{\bal}, \nu_2^{\bal}\in \mathcal M(\CC)$ на одну и ту же замкнутую
систему лучей, то для любых $c_1,c_2\in \RR$ существует выметание $(c_1\nu_1+c_2\nu_2)^{\bal}=c_1\nu_1^{\bal}+c_2\nu_2^{\bal}$ на ту
же систему лучей} (линейность); \item\label{s2} {\it для заряда $\nu\in \mathcal M(\CC)$ существание выметания
$\nu^{\bal}_{\CC\setminus \angle (\alpha,\beta)}\overset{\eqref{df:bal0upa}}{\in} \mathcal M(\CC)$ из каждого угла
$\angle (\alpha,\beta)$, дополнительного к $S$, эквивалентно существованию выметания $\nu^{\bal}_S\in \mathcal M (\CC)$, получаемого
применением не более чем счетного числа выметаний из всех дополнительных к $S$ углов в смысле определения\/} \ref{df:clbala};
\item\label{s3} {\it выметание $\nu^{\bal}_S$ заряда $\nu \in \mathcal M(\CC)$ на систему лучей $S$ при его существовании --- это
заряд из $\mathcal M (\CC )$ с носителем $\supp \nu_S^{\bal}\subset S$;} \item\label{s4} {\it выметание $\nu^{\bal}_S$ положительной
меры $\nu \in \mathcal M^+(\CC)$ на систему лучей $S$ при его существовании --- положительная мера из $\mathcal M^+(\CC )$.}
\end{enumerate}

Существование выметания $\nu^{\bal}\in \mathcal M(\CC)$ эквивалентна конечности \eqref{df:bal0upS} для любого $B\in \mathcal B_{\rm
b}(\CC)$. Для существования выметания $\nu^{\bal}\in \mathcal M(\CC)$ достаточно существования выметания $|\nu|^{\bal}\in \mathcal
M^+(\CC)$ полной вариации $|\nu|\in \mathcal M^+(\CC)$. По построению для вариаций заряда и его выметания 
\begin{equation}\label{pv+-} |\nu^{\bal}|\leq |\nu|^{\bal}, \quad (\nu^{\bal})^{\pm}\leq (\nu^{\pm})^{\bal}\,. \end{equation}
\begin{example} Неравенства в \eqref{pv+-} могут быть строгими. Пусть $S=\RR$. Рассмотрим заряд $\nu:=\delta_i-\delta_{-i}$ ---
разность мер Дирака в точках $i\in \CC^{\up}$ и $-i\in \CC_{\lw}$. Тогда по определению \ref{df:clbalS}  $\nu_{\RR}^{\bal}=0=|\nu_{\RR}^{\bal}|=(\nu_{\RR}^{\bal})^{\pm}$, 
а в обозначении \eqref{nuR} для плотностей функций распределения на $\RR$ имеем 
\begin{equation*}
\dd\,\bigl(|\nu|^{\bal}_{\RR}\bigr)^{\RR}(t)=\frac{2}{\pi
(1+t^2)}\dd t \, , \quad \dd\,\bigl((\nu^{\pm}_{\RR})^{\bal}\bigr)^{\RR}(t)=\frac{1}{\pi (1+t^2)}\dd t , \end{equation*} 
откуда
$|\nu|^{\bal}_{\RR}(\RR)=2>0=|\nu_{\RR}^{\bal}|(\RR)$ и $(\nu^{\pm})^{\bal}_{\RR}(\RR)=1>0=(\nu_{\RR}^{\bal})^{\pm}(\RR)$.
\end{example}

Редукция к $\CC^{\up}$ позволяет переформулировать определение \ref{c:akh} для угла.
\begin{definition}\label{c:akhan} Пусть $<-\infty<\alpha <\beta \leq\alpha+ 2\pi<+\infty$. Заряд $\nu\in \mathcal M(\CC)$ удовлетворяет {\it условию Бляшке около бесконечности в угле $\angle (\alpha,\beta)$,\/} если 
\begin{equation}\label{ineq:akhan} 
\int\limits_{\angle (\alpha,\beta)\setminus D(r_0)}-\Im
\frac{1}{(ze^{-i\alpha})^{\frac{\pi}{\beta-\alpha}}}\dd |\nu|(z)<+\infty\quad\text{для  $r_0\in \RR^+_*$}. 
\end{equation} 
Заряд $\nu\in \mathcal M(\CC)$ удовлетворяет {\it условию Бляшке около бесконечности вне замкнутой системы лучей $S$,\/} если заряд $\nu$ удовлетворяет условию Бляшке около бесконечности в любом угле, дополнительном к $S$. 
\end{definition}

\begin{theorem}\label{th:Sb} Если для заряда $\nu\in \mathcal M (\CC)$ выполнено условие Бляшке вне системы лучей $S$, то существует
выметание $\nu_S^{\bal}\in \mathcal M (\CC)$.

Для меры $\nu\in \mathcal M^+(\CC)$ выполнение условия Бляшке вне системы лучей $S$ эквивалентно существованию выметания
$\nu_S^{\bal}\in \mathcal M^+(\CC)$. \end{theorem} \begin{proof} По свойству (s\ref{s2}) импликация для заряда $\nu\in \mathcal M
(\CC)$ --- следствие теоремы \ref{th:cup}, а эквивалентность для меры $\nu\in \mathcal M(\CC)$ вытекает из эквивалентности
(b\ref{b3})$\Leftrightarrow$(b\ref{b1}) следствия \ref{th:b0+}.\end{proof}

Оценку роста выметаемого заряда или меры дает следующее обобщение предложения \ref{cor:ges} c незначительным ослаблением оценки
\eqref{es:grbR}. \begin{theorem}\label{th:0nub} Пусть $\nu\in \mathcal{M}(\CC)$, $S$ --- замкнутая система лучей в $\CC$, подмножество $R\subset \RR_*^+$ неограниченное в $\RR^+$ и существует функция 
\begin{subequations}\label{rgrB} 
\begin{align} 
g\colon R\to \RR_*^+,&\quad\text{для которой}\quad a(r):=\frac{r}{g(r)}<1 \quad\text{при всех $r\in R$}; \tag{\ref{rgrB}a}\label{{rgrB}a}\\
C^+_S(r, g;\nu)&:= \sum_{\angle (\alpha,\beta)} r^{\frac{\pi}{\beta-\alpha}}\int\limits_{\angle (\alpha,\beta)\setminus D(g(r))}-\Im
\frac{1}{(ze^{-i\alpha})^{\frac{\pi}{\beta-\alpha}}}\dd |\nu|(z), 
\tag{\ref{rgrB}b}\label{{rgrB}b} 
\end{align}
\end{subequations} 
где сумма в правой части берется по всем дополнительным к $S$ углам $\angle (\alpha,\beta)$. Тогда при конечности
\eqref{{rgrB}b} для всех $r\in R$ существует выметание \eqref{df:bal0upS} с оценками \begin{equation}\label{nubrB} |\nu^{\bal}|^{\rad}
(r)\leq |\nu|^{\rad}\bigl(g(r)\bigr) +\frac{8g^2(r)}{\pi \bigl(g(r)-r\bigr)^2}C^+_S(r,g;\nu) \quad\text{для всех $r\in R$}. 
\end{equation} 
\end{theorem}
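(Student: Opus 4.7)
The strategy is to adapt the argument of Proposition \ref{cor:ges} to the multi-wedge setting, replacing the half-plane estimate \eqref{afminAM} by the angular harmonic-measure bound \eqref{aalbaa} of Proposition \ref{aalb} on each connected wedge component $\angle(\alpha,\beta)$ of $\CC\setminus S$, then piecing contributions together via the formula \eqref{df:bal0upS}. I would first verify existence: since every summand in \eqref{{rgrB}b} is nonnegative, finiteness of $C_S^+(r,g;\nu)$ at any single $r\in R$ forces
$$
\int\limits_{\angle(\alpha,\beta)\setminus D(g(r))}-\Im\frac{1}{(ze^{-i\alpha})^{\pi/(\beta-\alpha)}}\,\dd |\nu|(z)<+\infty
$$
for every wedge, which is precisely the Ahlfors condition of Definition \ref{c:akhan} with $r_0:=g(r)$. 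Theorem \ref{th:Sb} then yields $\nu_S^{\bal}\in\mathcal M(\CC)$ and $|\nu|_S^{\bal}\in\mathcal M^+(\CC)$, and by \eqref{pv+-} it suffices to establish \eqref{nubrB} with $|\nu|_S^{\bal}$ in place of $|\nu^{\bal}|$, reducing the problem to a nonnegative measure.

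For fixed $r\in R$, I would apply \eqref{df:bal0upS} to $|\nu|$ and the ball $B=\overline D(r)$, and split the integration along the sphere $\partial D(g(r))$. On $\overline D(g(r))$ the trivial bound $\omega(z,\overline D(r);\CC\setminus S)\le 1$ yields a contribution $\le |\nu|^{\rad}(g(r))$, producing the first term of \eqref{nubrB}. On the complementary region $\CC\setminus\overline D(g(r))$, any mass on $S$ contributes nothing (for $z\in S$ the harmonic measure is $\delta_z$, and $z\notin\overline D(r)$), so only the wedges $\angle(\alpha,\beta)$ are relevant, and on each wedge $\omega(z,\cdot;\CC\setminus S)$ coincides with $\omega(z,\cdot;\angle(\alpha,\beta))$.

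For $z\in\angle(\alpha,\beta)\setminus\overline D(g(r))$ one has $a(r)|z|\ge a(r)g(r)=r$, which is the hypothesis of Proposition \ref{aalb} with $a=a(r)$, giving
$$
\omega\bigl(z,\overline D(r);\angle(\alpha,\beta)\bigr)\le \frac{2\,r^{\pi/(\beta-\alpha)}}{\pi\bigl(1-a(r)^{\pi/(\beta-\alpha)}\bigr)^2}\left(-\Im\frac{1}{(ze^{-i\alpha})^{\pi/(\beta-\alpha)}}\right).
$$
The main analytic step is to bound the prefactor uniformly in the aperture. Writing $p:=\pi/(\beta-\alpha)\ge 1/2$ (since $\beta-\alpha\le 2\pi$) and using the elementary inequality $1-a^p\ge 1-a^{1/2}=(1-a)/(1+a^{1/2})\ge(1-a)/2$, valid for $a\in(0,1)$, one obtains
$$
\frac{2}{\pi\bigl(1-a(r)^{\pi/(\beta-\alpha)}\bigr)^2}\le\frac{8}{\pi(1-a(r))^2}=\frac{8\,g^2(r)}{\pi\bigl(g(r)-r\bigr)^2},
$$
a constant independent of the wedge. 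Summing these pointwise bounds over all wedge components recovers precisely the term $\frac{8g^2(r)}{\pi(g(r)-r)^2}C_S^+(r,g;\nu)$; combining with the $|\nu|^{\rad}(g(r))$ contribution from the inner region yields \eqref{nubrB}.

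The principal difficulty is exactly this uniform-in-aperture control of the prefactor: without the bound $1-a^p\ge(1-a)/2$ for $p\ge 1/2$, wedges with $\beta-\alpha$ close to $2\pi$ (the hardest geometric case, where the wedge nearly fills the plane) would produce an unbounded prefactor, and no single clean constant such as $8g^2/(\pi(g-r)^2)$ could appear in \eqref{nubrB}.
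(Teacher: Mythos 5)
Your proof is correct and follows essentially the same route as the paper's: split $|\nu|$ at $\partial D\bigl(g(r)\bigr)$, use the trivial bound $\omega\le 1$ on the inner part, apply \eqref{aalbaa} with $a=a(r)$ on the outer part, and control the prefactor uniformly over apertures via the worst case $\pi/(\beta-\alpha)=1/2$ (your inequality $1-a^{1/2}\ge(1-a)/2$ is the same algebra as the paper's $\bigl(\sqrt{g(r)}-\sqrt{r}\bigr)^2\ge (g(r)-r)^2/(4g(r))$). The only cosmetic difference is that you make explicit the existence step (via Definition \ref{c:akhan} and Theorem \ref{th:Sb}) and the reduction to the nonnegative measure $|\nu|$ via \eqref{pv+-}, both of which the paper leaves implicit.
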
 
\begin{proof} Достаточно при произвольном $r\in R$
доказать \eqref{nubrB}. Пусть $\nu_{\infty}:=|\nu|\bigm|_{\CC\setminus D(g(r))}\in \mathcal M^+(\CC)$ --- сужение полной вариации
$|\nu|$ на внешность круга $D(g(r))$, $\nu_r:=|\nu|-\nu_{\infty}\in \mathcal M^+(\CC)$ --- сужение полной вариации $|\nu|$ на круг
$D(g(r))$. Из определяющих соотношений \eqref{df:bal0upS} предложения
\ref{df:clbalS} имеем 
\begin{equation}\label{df:bal0upSg} (\nu_r)^{\bal}_S\bigl(D(r)\bigr)=\int\limits_{D(g(r))} \omega\bigl(z, D(r);
\CC\setminus S\bigr)\dd \nu_r(z)\leq 1\cdot \nu_r^{\rad}(g(r)).
\end{equation} В силу \eqref{{rgrB}a}
при $z\in \supp \nu_{\infty}$ выполнено условие $a(r)|z|\geq r$ 
предложения \ref{aalb} и из оценки \eqref{aalbaa} в
обозначениях из \eqref{rgrB} имеем оценку 
\begin{multline*} (\nu_{\infty})^{\bal}_S\bigl(D(r)\bigr)\leq \left(
\sup_{0<\beta-\alpha\leq 2\pi} \frac{2}{\pi \bigl(1-a^{\frac{\pi}{\beta-\alpha}}(r)\bigr)^2}\right)\, C^+_S(r,g;\nu)\\
=\frac{2g(r)}{\pi \bigl(1-\sqrt{a(r)}\,\bigr)^2} C^+_S(r,g;\nu) =\frac{2g(r)}{\pi \bigl(\sqrt{g(r)}-\sqrt{r}\,\bigr)^2} C^+_S(r,g;\nu)
\end{multline*} что при $C^+_S(r,g;\nu)<\infty$ вместе с
\eqref{df:bal0upSg} даёт \eqref{nubrB}. 
\end{proof} Для положительных мер конечного порядка имеет место завершенная
\begin{theorem}\label{th:3p} Пусть $\nu\geq 0$ --- мера конечного типа при порядке $p\in \RR^+$. 
Тогда попарно эквивалентны следующие четыре утверждения:
\begin{enumerate}[{\rm (i)}] 
	\item\label{bi} $\int\limits_{\CC} \omega\bigl(z, D(r_0); \CC\setminus S\bigr)\dd
\nu(z)\overset{\eqref{df:bal0upS}}{<}+\infty$ для некоторого числа $r_0>0$; 
\item\label{bii} для меры $\nu$ в каждом из дополнительных
к $S$ углов  $\angle (\alpha,\beta)$ раствора $\beta-\alpha\geq \pi/p$, число которых не больше $2p$, выполнено условие Бляшке
\eqref{ineq:akhan} определения\/ {\rm \ref{c:akhan}} около бесконечности; 
\item\label{biii} заряд $\nu\in \mathcal M(\CC)$ удовлетворяет условию Бляшке
около бесконечности вне системы лучей $S$; 
\item\label{biv} определено выметание
$\nu_S^{\bal}\overset{\eqref{df:bal0upS}}{\in}\mathcal{M}(\CC)$ и $\type_p^{\infty}[\nu_S^{\bal}]<+\infty$. \end{enumerate}
\end{theorem} \begin{proof} Эквивалентности \eqref{bi}\,$\Leftrightarrow$\,\eqref{bii}\,$\Leftrightarrow$\,\eqref{biv}\, доказаны в
\cite[теорема 1.1]{Kha91}, \cite[теорема 1.1.1]{KhDD92} на основе ослабленных версий \cite[лемма 1.1]{Kha91}, \cite[лемма
1.1.1]{KhDD92} оценок сверху и снизу гармонической меры из предложений \ref{pr:gmA} и \ref{pr:gmA+}. Импликация
\eqref{bii}\,$\Rightarrow$\,\eqref{biv} может быть получена и из теоремы \ref{th:0nub}. Импликация
\eqref{biii}\,$\Rightarrow$\,\eqref{bii} очевидна. Рассуждения, доказывающие импликацию \eqref{bii}\,$\Rightarrow$\,\eqref{biii},
также содержаться в \cite[доказательство теоремы 1.1]{Kha91}, \cite[доказательство теоремы 1.1.1]{KhDD92} в части, касающейся
выметания меры $\mu_2$ из всех дополнительных к $S$ углов раствора $<\pi/p$. \end{proof}
	
\begin{remark}\label{df:baldop} Если $p\in \RR^+$,   $\nu\in \mathcal{M}^+(\CC)$  с $\type_p^{\infty}[\nu]<+\infty$ и выполнено одно (любое) из четырех попарно   	эквивалентных утверждений \eqref{bi}--\eqref{biv} теоремы \ref{th:3p}, то  	в \cite[определение 1.4]{Kha91}, \cite[определение 1.1.3]{KhDD92} 	 система лучей $S$ называется  {\it допустимой для меры\/} $\nu$.
\end{remark}

\subsubsection{Условие Линделёфа}\label{Lc} 
\begin{definition}[{\cite[следствие 2.1]{Kha91}, \cite[следствие
1.3.1]{KhDD92}}]\label{df:Lc} Для заряда $\nu\in \mathcal M(\CC)$ выполнено {\it условие Линделёфа рода $q\in \NN$,} если для некоторого числа $r_0>0$  имеет место асимптотическое соотношение 
\begin{equation}\label{con:Lp} \Biggl|\;\int\limits_{D(r)\setminus D(r_0)} \frac{1}{z^q}
\dd \nu(z) \Biggr|=O(1) \quad\text{при $r\to +\infty$.} 
\end{equation} 
\end{definition} Следующий результат основан на предложении
\ref{Lv:vpp}. \begin{theorem}[{\cite[следствие 2.1, теорема 3.1]{Kha91}, \cite[следствие 1.3.1, теорема 1.4.1]{KhDD92}}]\label{th:Lc}
Пусть $p\in \NN$ и   мера $\nu\in \mathcal{M}^+(\CC)$ с $\type_p^{\infty}[\nu]<+\infty$ удовлетворяет условию Бляшке вне  системы лучей $S$. Если мера $\nu$ удовлетворяет условию Линделёфа \eqref{con:Lp} рода $p$, то выметание $\nu_S^{\bal}$ меры $\nu$ на систему
лучей $S$ также удовлетворяет условию Линделёфа рода $p$. \end{theorem}

\begin{remark}\label{b:am} Пусть $(r_n)_{n\in \NN}$ --- возрастающая неограниченная последовательность чисел из $\RR^+_*$. Для меры
$\nu\in \mathcal M^+(\CC)$, удовлетворяющей условию Бляшке около бесконечности вне замкнутой системы лучей $S$ в смысле определения
\ref{c:akhan}, выметание $\nu_S^{\bal}$ на $S$ можно построить и как $*$-слабый предел \cite[{\bf A.4}]{Rans} последовательности
выметаний меры $\nu$ из не более чем счётного числа ограниченных в $\CC$ объединений секторов $D(r_n)\cap (\CC\setminus S)$
\cite[гл.~IV, \S~1]{L} с теми же итоговыми результатами, что и выше в \S~\ref{kvm}. Но такой подход технически гораздо более
трудоёмкий, во всяком случае в нашей реализации. В настоящей работе он и не потребуется, если использовать сведения, технику и
результаты из следующего \S~\ref{imu}. \end{remark}

\section{ Интеграл семейства мер по мере}\label{imu} \setcounter{equation}{0} Два различных взгляда на теорию меры и соответственно
интегрирования (с одной стороны мера как функция множеств и интеграл Лебега, с другой --- мера как линейный функционал и интеграл
Радона) излагаются почти всегда раздельно (ср. \cite{DSh} и \cite{B}; исключение в \cite{M}). В вопросах, рассматриваемых в настоящей
работе, эти подходы дают один и тот же результат. Однако ряд моментов теории интегрирование достаточно подробно рассмотрен в доступной
литературе лишь для интегралов Радона. В целях автономности изложения, удобства ссылок и во избежание разночтений и несогласованности
по толкованию терминов приведем некоторые понятия и факты теории интегрирования в духе Н.~Бурбаки \cite{B}, адаптированной на частный
случай подмножеств в конечномерном пространстве.

\subsection{Повторные интегралы}\label{pI} Пусть $k\in \NN$, $Z$ --- локально компактное подпространство в $\RR^k$ с естественной
евклидовой топологией; $C_0 (Z)$ --- пространство непрерывных действительных функций на $Z$ с компактным носителем, или $\mathcal K
(Z)$ в обозначениях из \cite{B}, а $\mu$ --- положительная мера Радона на $Z$, т.\,е. положительный линейный функционал на $C_0(Z)$.
Функция $f\colon Z\to \RR_{\pm\infty}$ является $\mu$-интегрируемой на $Z$, если определён  \begin{equation}
\text{\it интеграл}\quad \mu\bigl(|f|\bigr)=\int |f| \dd \mu\in \RR \quad\text{\cite[гл.~IV, \S~4]{B}}. 
\end{equation}

Пусть $p\in \NN$, $X\subset \RR^p$ --- локально компактное подпространство в $\RR^p$, \begin{equation}\label{s:k} T=\{\tau_z\colon
z\in Z\}\quad \text{\it--- семейство положительных мер Радона на\/ $X$.} \end{equation} Предположим, что для любой функции $f\in
C_0(X)$ функция $z\mapsto \tau_{z}(f)$ является $\mu$-интегрируемой на $Z$. Положительный линейный функционал $\nu$ на $C_0(X)$,
определенный по правилу \cite[гл.~V, \S~3]{B}, \cite[Введение, \S~1]{L} \begin{equation*}\label{semtmu} 
\nu(f):=\int \tau_z(f) \dd \mu(z)\, , 
\end{equation*} --- {\it интеграл семейства мер\/} $T$ из \eqref{s:k} {\it по мере\/ $\mu$}. Обозначается это как
\begin{equation}\label{intmm} 
\nu=\int\tau_{z}\dd \mu(z). 
\end{equation} В принятых в этом п.~\ref{pI} обозначениях в несколько
ослабленной, но достаточной для наших применений форме справедливо  развитие и обобщение классической теоремы Фубини  о повторных интегралах. 
\begin{Thopi}[{\cite[гл.~V, \S~3, п.~4, теорема 1]{B}}] Пусть семейство мер $T$ из \eqref{s:k} удовлетворяет
следующим условиям: 
\begin{enumerate}[{\rm (1)}] 
	\item\label{1mu} Для любой функции $f\in C_0(X)$ функция $z\mapsto \tau_z(f)$
$\mu$-инт\-е\-г\-р\-и\-р\-уема. 
\item\label{2mu} Отображение $z\mapsto \bigl\{\tau_z(f)\bigr\}_{f\in C_0(X)}$ пространства $Z$ в
пространство $\RR^{C_0(X)}$ функций на $C_0(X)$ со значениями в $\RR$ $\mu$-измеримо. 
\end{enumerate} Пусть $F\colon X\to \RR_{\pm\infty}$ --- $\nu$-интегрируемая функция и для
$\mu$-почти всех $z\in Z$ функция $F$ является $\tau_z$-интегрируемой, а для каждой функции $f\in C_0(X)$ отображение $z\mapsto
\tau_z(f)$ непрерывно. Тогда функция $z\mapsto \tau_z(F)$ является $\mu$-интегр\-и\-р\-у\-е\-м\-ой и 
\begin{equation} \int_X F(x)\dd
\nu (x)=\int_Z \left(\int_X F(x)\dd \tau_z(x)\right) \dd \mu (z). 
\end{equation} 
\end{Thopi} 
\subsection{Выметание и повторные интегралы} Здесь мы используем теорему о повторных интегралах в частном случае, когда $k=p=2$, $\RR^2$ отождествлено с $\CC$ и
$Z=X=\CC$, а в роли семейства $T$ из \eqref{s:k} рассматривается семейство выметаний мер Дирака на систему лучей $S$.
\begin{theorem}\label{th:IB} Пусть $S$ --- замкнутая система лучей
лучей в $\CC$, $z\in \CC$, $\delta_z^{\bal}$ --- выметание на $S$ меры Дирака\/ $\delta_z$ и существует\/ {\rm (см. теоремы \ref{th:Sb}--\ref{th:3p})} выметание $\nu^{\bal}$  на $S$  заряда\/ $\nu\in \mathcal M (\CC)$. Тогда
\begin{enumerate}[{\rm (i)}] \item\label{dei} для любого  $B\in \mathcal{B}(\CC)$ с 
$ \boldsymbol{1}_B(z):=1$ при $z\in B$ и  $\boldsymbol{1}_B(z):=0$  при $z\notin B$
\begin{equation}\label{1balo} \int
\mathbf{1}_B (z')\dd \delta_z^{\bal}(z')=\omega(z,B;\CC\setminus S) \quad\text{при всех $z\in \CC$}, \end{equation} т.\,е.
$\delta_z^{\bal}=\omega(z,\cdot \,;\CC\setminus S)$ для всех $z\in \CC$; \item\label{deii} для любой функции $f\in C_0(\CC)$ функция
$z\mapsto \delta_z^{\bal}(f)$ непрерывна на $\CC$; \item\label{deiv} для любой $\nu^{\bal}$-интегрируемой функции $F\colon \CC\to
\RR_{\pm\infty}$ 
\begin{subequations}\label{c:del} \begin{align} \int F(z)\dd \nu^{\bal}(z)&=\int \mathcal P_{\CC\setminus S}F(z)\dd
\nu (z), \tag{\ref{c:del}a}\label{{c:del}a}\\ \intertext{где подынтегральная функция в правой части \eqref{{c:del}a}} \mathcal
P_{\CC\setminus S}F(z)&\overset{\eqref{df:PDf}}{:=}\int F(z')\dd \delta_z^{\bal}(z') \tag{\ref{c:del}b}\label{{c:del}b} \end{align}
\end{subequations} $\nu$-интегрируема и называется далее интегралом Пуассона функции $F$ на дополнении $\CC\setminus S$ системы
лучей\/ $S$\/ {\rm (ср. с \eqref{df:PDf})}; \item\label{deiii} выметание $\nu^{\bal}=(\nu^+)^{\bal}-(\nu^-)^{\bal}$ --- разность
выметаний на $S$ верхней и нижней вариаций $\nu$, равных интегралу семейства мер $\delta_z^{\bal}$ по $\nu^{\pm}$, т.\,е.
 имеют место равенства
 \begin{equation}\label{eqomd} (\nu^{\pm})^{\bal}\overset{\eqref{intmm}}{=}\int \delta_z^{\bal} \dd \nu^{\pm}(z)= \int \omega(z,\cdot
\,;\CC\setminus S) \dd \nu^{\pm}(z). 
\end{equation} 
\end{enumerate} 
\end{theorem}
 \begin{proof} \eqref{dei}. Следует из очевидных
равенств \begin{equation*} \omega(z,B;\CC\setminus S)=\int \omega(z',B;\CC\setminus S) \dd \delta_z(z')
\\\overset{\eqref{df:bal0upS}}{=}\delta_z^{\bal}(B)=\int \mathbf{1}_B (z')\dd \delta_z^{\bal}(z'). \end{equation*} \eqref{deii}. Пусть
$f\in C_0(\CC)$. Полагаем 
\begin{equation}\label{dfFF} F(z):=\int f(z')\dd \delta_z^{\bal}(z') 
\end{equation} 
Необходимо доказать непрерывность функции $F$ на $\CC$. По определению гармонической меры и выметания функция $F$ из \eqref{dfFF} является гармоническим
продолжением функции $f$ с границы каждого дополнительного к $S$ угла внутрь этого угла. Из известных классических фактов функция $F$
непрерывна в замыкании каждого дополнительного к $S$ угла и совпадает с $f$ на $S$. 
Это еще не влечет за собой непрерывности функции $F$ на всей плоскости $\CC$. Для доказательства последнего необходимо рассмотреть и возможный случай последовательности  точек $(z_n)_{n\in \NN}$, стремящейся к точке $z_0\in S$ в предположении, что каждая из этих точек $z_n$ попадает в, вообще говоря, различные замыкания $\angle\,[\alpha_n,\beta_n]$, $n\in \NN$, некоторых дополнительных к $S$ углов $\angle (\alpha_n,\beta_n)$ раствора $\beta_n-\alpha_n \underset{n\to \infty}{\longrightarrow} 0$. 
При этом необходимо убедиться, что $\lim\limits_{n\to\infty}F(z_n)=f(z_0)$. Поскольку функция $F$ из \eqref{dfFF}, совпадающая с $f$ на  $S$, непрерывна в замыкании $\angle\,[\alpha_n,\beta_n]$ каждого дополнительного к $S$ угла $\angle (\alpha_n,\beta_n)$, можно   считать, что $z_n\in \angle (\alpha_n,\beta_n)$ при всех $n\in \NN$. Перейдем к рассмотрению такой ситуации. 

 Для произвольного числа $d>0$ выберем достаточно
малое число $r>0$ так, что \begin{equation}\label{drM} \sup_{z\in D(z_0,2r)}\bigl|f(z)-f(z_0)\bigr|\leq d; \quad M:=\sup_{z\in
\CC}\,\bigl|f(z)\bigr|. \end{equation} Для некоторого номера $n_d\in \NN$ при $n\geq n_d$ все точки $z_n$ лежат в $D(z_0,r)$, а
стороны-границы каждого угла $\angle (\alpha_n,\beta_n)$ пересекают границу $\partial D(z_0,r)$ этого круга $D(z_0,r)$. Увеличивая, если нужно, номер $n_d$,
можно добиться того, что для некоторого числа $a\in (0,1)$, не зависящего от $n\geq n_d$, 
имеем
\begin{equation}\label{azzn0} a|z_n|\geq |z| \quad \text{при $z\in {\rm C}_n^0$}; \quad |z_n|\leq a|z| \quad \text{при $z\in {\rm
C}_n^{\infty}$}, 
\end{equation} 
где ${\rm C}_n^0$ и ${\rm C}_n^{\infty}$ --- соответственно ограниченная и неограниченная  в $\CC$ связные компоненты множества
$\angle (\alpha_n,\beta_n)\setminus D(z_0,2r)$. Заметим, что при $z_0=0$ множество ${\rm C}_n^0$ пусто. Оценим разность 
\begin{equation}\label{dfFFf} f(z_0)-F(z_n)\overset{\eqref{dfFF}}{=}\int \bigl(f(z_0)-f(z)\bigr)\dd \delta_{z_n}^{\bal}(z).
\end{equation} Разбивая последний интеграл на сумму интегралов по трем множествам $D(z_0,r)$, ${\rm C}_n^0$ и ${\rm C}_n^{\infty}$,
получаем неравенство типа теоремы о двух константах: \begin{multline}\label{lm_oma}
\bigl|f(z_0)-F(z_n)\bigr|\overset{\eqref{dfFFf}}{\leq} \int_{D(z_0,r)} \bigl|f(z_0)-f(z)\bigr| \dd \delta_{z_n}^{\bal}(z)\\
+\left(\int_{{\rm C}^0_n}+\int_{{\rm C}^{\infty}_n}\right) \bigl(|f(z_0)|+|f(z)|\bigr) \dd
\delta_{z_n}^{\bal}(z)\overset{\eqref{drM}}{\leq} d+\int\limits_{{\rm C}^{0}_n\cup {\rm C}^{\infty}_n} 2M\dd \delta_{z_n}^{\bal}(z)\\
\overset{\eqref{1balo}}{=}d+2M \omega\bigl(z_n, {\rm C}^{0}_n\cup {\rm C}^{\infty}_n; \CC \setminus S\bigr) =d+2M \omega\bigl(z_n,
{\rm C}^{0}_n\cup {\rm C}^{\infty}_n; \angle (\alpha_n,\beta_n)\bigr)\\ \leq d+2M \omega\bigl(z_n, {\rm C}^{0}_n;
\angle (\alpha_n,\beta_n)\bigr)+2M \omega\bigl(z_n, {\rm C}^{\infty}_n; \angle (\alpha_n,\beta_n)\bigr), \end{multline} где
последнее равенство следует из определения гармонической меры для дополнения системы лучей из 
пп.~\ref{df:harmm} ввиду $z_n\in
\angle (\alpha_n,\beta_n)$. Покажем, что оба последних слагаемых в правой части \eqref{lm_oma} стремятся к нулю при $z_n\underset{n\to \infty}{\longrightarrow}  z_0$. Для
этого положим \begin{equation}\label{tnozn} t_n^0:=\sup_{z\in {\rm C}^{0}_n}|z|, \quad t_n^{\infty}:=\inf_{z\in {\rm
C}^{\infty}_n}|z|. \end{equation} При этом из соотношений \eqref{azzn0} следует \begin{equation}\label{aznd} a|z_n|\geq t_n^0,\quad
|z_n|\leq at_n^{\infty} \quad\text{при $n\geq n_d$}. \end{equation} В обозначениях \eqref{tnozn} имеем ${\rm C}^{0}_n\subset
\overline{D}( t_n^0)$ и ${\rm C}^{\infty}_n\subset \CC\setminus D(t_n^{\infty})$, откуда\begin{multline}\label{lm_omaD}
\bigl|f(z_0)-F(z_n)\bigr|\overset{\eqref{lm_oma}}{\leq} d+2M \omega\bigl(z_n, {\rm C}^{0}_n; \angle (\alpha_n,\beta_n)\bigr)+2M
\omega\bigl(z_n, {\rm C}^{\infty}_n; \angle (\alpha_n,\beta_n)\bigr)\\ \leq d+2M \omega\bigl(z_n, \overline{D}( t_n^0);
\angle (\alpha_n,\beta_n)\bigr) +2M \omega\bigl(z_n, \CC\setminus D(t_n^{\infty}); \angle (\alpha_n,\beta_n)\bigr) \end{multline} По
предложению \ref{aalb} при условии $a|z|\geq r$, соответствующему первому неравенству в \eqref{aznd} с $z=z_n$ и $r=t_n^0$, из
\eqref{aalbaa} получаем \begin{multline}\label{Dt} \omega\bigl(z_n, \overline{D}( t_n^0); \angle (\alpha_n,\beta_n)\bigr) \leq
\frac{2 (t_n^0)^{\pi/(\beta_n-\alpha_n)}}{\pi\bigl(1-a^{\pi/(\beta_n-\alpha_n)}\bigr)^2}\, \left(-\Im
\frac{1}{(z_n)^{\pi/(\beta_n-\alpha_n)}}\right)\\ \leq \frac{2}{\pi \bigl(1-\sqrt{a}\,\bigr)^2}
\left(\frac{t_n^0}{|z_n|}\right)^{\pi/(\beta_n-\alpha_n)} \overset{\eqref{aznd}}{\leq} \frac{2}{\pi \bigl(1-\sqrt{a}\,\bigr)^2} \,
a^{\pi/(\beta_n-\alpha_n)} \underset{n\to \infty}{\longrightarrow} 0 \,, \end{multline} 
поскольку $(0,2\pi]\ni \beta_n-\alpha_n\to 0$
при $n\to \infty$ и $0<a<1$. Аналогично, по предложению \ref{aalb} при условии $ar\geq |z|$, соответствующему второму неравенству в
\eqref{aznd} с $z=z_n$ и $r=t_n^{\infty}$, из \eqref{aalbaad} получаем 
\begin{multline}\label{Dti} \omega\bigl(z_n, \CC\setminus
D(t_n^{\infty}); \angle (\alpha_n,\beta_n)\bigr) \leq \frac{2
(t_n^{\infty})^{-\pi/(\beta_n-\alpha_n)}}{\pi\bigl(1-a^{\pi/(\beta_n-\alpha_n)}\bigr)^2}\, \Im\, (z_n)^{\pi/(\beta_n-\alpha_n)}\\ \leq
\frac{2}{\pi \bigl(1-\sqrt{a}\,\bigr)^2} \left(\frac{|z_n|}{t_n^{\infty}}\right)^{\pi/(\beta_n-\alpha_n)} \overset{\eqref{aznd}}{\leq}
\frac{2}{\pi \bigl(1-\sqrt{a}\,\bigr)^2} \, a^{\pi/(\beta_n-\alpha_n)} \underset{n\to \infty}{\longrightarrow} 0 \,. \end{multline}
Из соотношений \eqref{Dt} и \eqref{Dti} согласно \eqref{lm_omaD} следует неравенство $$\lim\limits_{n\to
\infty}\bigl|f(z_0)-F(z_n)\bigr|\leq d.$$ Таким образом, в силу произвола в выборе числа $d>0$ и точки $z_0\in \CC$ получаем
$\lim\limits_{n\to \infty} F(z_n)=f(z_0)$ для всех $z_0\in \CC$. Условия \eqref{1mu}--\eqref{2mu} теоремы о повторных интегралах в
случае, когда в роли мер $\tau_z$ выступают выметания $\delta_z^{\bal}$ мер Дирака достаточно очевидны, что вместе с \eqref{deii} по
теореме о повторных интегралах доказывает \eqref{c:del} из \eqref{deiv}.

Наконец, равенства \eqref{eqomd} п.~\eqref{deiii} сразу следуют из равенства \eqref{1balo} п.~\eqref{dei} по определению
\ref{df:clbala} после интегрирования обеих частей равенства \eqref{1balo} по верхней и нижней вариациям $\nu^{\pm}$. \end{proof}

\section{Классическое выметание субгармонической функции\\ на систему лучей}\label{kvsf} \setcounter{equation}{0} 
\subsection{Субгармонические ядра и глобальное представление Рисса}\label{SkRd}
Пусть $B$ --- борелевское подмножество в области $\Omega$ и $\nu \in \mc M^+(\Omega)$. По определению $L^1(B,\dd \nu)$ --- множество
всех функций $g\colon B \to \RR_{\pm \infty}$, интегрируемых по мере $\nu\bigm|_B$, т.\,е. таких, что $\int_B|g| \dd \nu <+\infty$.

\begin{definition}[{\cite[определение 2]{Kh07}}]\label{df:subk}Пусть $B$ --- борелевское подмножество в $\Omega$, а борелевская
функция $h \colon B\times \Omega \to \RR$ локально ограничена и для каждой фиксированной точки $\zeta \in B$ функция $h (\zeta ,\cdot
)\colon \Omega \to \RR$ {\it гармоническая на\/} $\Omega$. Тогда функцию \begin{equation}\label{rep:subk} k \colon (\zeta , z) \,
\longmapsto \, \log |\zeta -z|+h (\zeta , z), \quad (\zeta , z)\in B\times \Omega, \end{equation} будем называть {\it субгармоническим
ядром на\/} $B\times \Omega$ ({\it с гармонической компонентой\/ $h$ и несущим множеством\/ $B$}). \end{definition}

\begin{definition}[{\cite[определение 3]{Kh07}}]\label{df:subks}Пусть мера $\nu \in \mc M^+(\Omega)$ {\it сосредоточена на борелевском
подмножестве\/} $B\subset \Omega$. Субгармоническое ядро $k$ на $B\times \Omega$ называем {\it подходящим для\/} $\nu$, если для
каждой точки $z\in \Omega$ найдутся подобласть $D_z\Subset \Omega$, содержащая точку $z$, и функция $g_z \in L^1\bigl((\Omega
\setminus D_z)\cap B, \dd \nu \bigr)$, для которых
\begin{equation}\label{dg:nuk} \sup_{w\in D_z}\bigl|
k(\zeta , w )\bigr|\leq g_z(\zeta) \quad \text{\it при всех\/ \; $\zeta \in (\Omega \setminus D_z)\cap B$}. \end{equation}
\end{definition}

Следующее предложение представляет собой глобальную версию классической теоремы Рисса о локальном представлении субгармонической
функции в виде суммы логарифмического потенциала ее меры Рисса и некоторой гармонической функции \cite[Теорема~3.9]{HK},
\cite[Теорема~3.7.9]{Rans}. \begin{propos}[{\cite[предложение 3.11]{Kh07}}]\label{pr:Riesz} Пусть мера\/ $\nu \in \mc M^+(\Omega)$
сосредоточена на борелевском подмножестве\/ $B\subset \Omega$ и субгармоническое ядро\/ $k$ на\/ $B\times \Omega$ --- подходящее для
меры\/ $\nu$. Тогда интеграл 
\begin{equation}\label{rep:glpot} 
U^{\nu}_k(z):= \int_B k (\zeta , z) \dd \nu( \zeta ), \quad z\in \Omega, 
\end{equation} 
определяет субгармоническую на\/ $\Omega$ функцию с мерой Рисса\/ $\nu$, а для каждой функции $v\in \sbh
(\Omega)$ с мерой Рисса\/ $\nu_v=\nu$ имеет место представление 
\begin{equation}\label{repr:Riesz}
 v= U_k^{\nu}+H \quad \text{на \;
$\Omega$, \; где $H\in \Har (\Omega)$}. 
\end{equation} 
\end{propos}

В случае произвольной субгармонической в $\Omega:= \CC$ функции $v\neq \boldsymbol {-\infty}$ с мерой Рисса $\nu:=\nu_v$ конструкция
одного из возможных вариантов подходящего для меры $\nu$ субгармонического ядра давно известно как ядро Вейерштрасса \cite[4.1]{HK}.
Для согласованности с обозначениями из \cite{HK} введем его в той же форме, что и в \cite[4.1]{HK} (в наших записях переменная $z$
соответствует переменной $x$, а $\zeta$ --- соответственно $\xi$ или $\zeta$ в \cite[4.1]{HK} и в некоторых  выражениях переменные переставлены местами). Положим 
\begin{equation}\label{Ko} 
K(z):=\log |z|, \quad K(\zeta-z)=\sum_{j=0}^\infty a_j(\zeta, z)= \Re \Bigl\{\log
|\zeta|-\sum_{j=1}^{\infty}\frac1{j}\Bigl(\frac{z}{\zeta}\Bigr)^j\Bigr\} \end{equation} --- разложение гармонической по $z$ в $\CC$,
исключая точку $z=\zeta$, в степенной ряд по переменным $z,\bar z$, где при фиксированном $j$ и $\zeta\neq 0$ через $a_j(\zeta, z)$,
$j\in \NN$, обозначены одночлены от $z,\bar z$ степени $j$. При этом одночлены $a_j(\zeta, z)$, $j\in \NN$, гармоничны по $z$ при
фиксированном $\zeta$ и непрерывны по совокупности переменных $z,\zeta$ при $|\zeta|\neq 0$ \cite[лемма 4.1]{HK}. Наряду с \eqref{Ko}
рассмотрим <<урезанные>> ядра \cite[(4.1.2)]{HK} 
\begin{equation}\label{Kqz} K_q(\zeta,z):= K(\zeta-z)-\sum_{j=0}^q a_j(\zeta, z),
\quad q\in \{-1\}\cup \NN_0, \quad  \sum_{j=0}^{-1}\dots:=0,
\end{equation} 
--- субгармонические ядра, которые в \cite[\S~2, 2.1, примеры, 3(${\mathrm E}_q$)]{Kh07} названы ядрами Вейерштрасса\,--\,Адамара рода $q$.  Для  меры $\nu\in \mc M^+(\CC)$ всегда  можно  подобрать положительную возрастающую функцию $q\colon \RR^+\to \{-1\}\cup \NN_0$, 
непрерывную справа, тождественно равную $-1$ на  $[0,1)$, для которой 
\begin{equation}\label{conver} \int_{0}^{\infty} \Bigl(\frac{x_0}{t}\Bigr)^{q(t)+1} \dd
\nu^{\rad}(t)<+\infty \quad\text{при любом $x_0\in \RR_*^+$}. 
\end{equation} 
Пусть $(r_n)_{n\in \NN_0}\subset \RR_*^+$, $r_0=0$, $r_1\geq 1$, --- возрастающая последовательность всех точек, в которых происходит скачок функции $q$. Положим $q_n:=q(r_n)$, $n\in \NN_0$. Субгармоническое {\it ядро Вейерштрасса\/} на  $\CC \times \CC$  относительно этих двух последовательностей  $(r_n)_{n\in \NN_0}$ и $(q_n)_{n\in \NN_0}$ --- это функция
\cite[(4.1.10)]{HK}, \cite[\S~2, 2.1, примеры, 3(${\mathrm W}_{\infty}$)]{Kh07}
\begin{equation}\label{Kqzz}
K_{q(|\cdot|)}(\zeta,z)\overset{\eqref{Kqz}}{:=}
K_{q_n}(\zeta , z)\quad \text{при  $r_{n}\leq |\zeta|< r_{n+1}$, $n\in \NN_0$}.
\end{equation}
Из известных оценок ядер \eqref{Kqz} \cite[4.1.1]{HK}  с помощью   интегрирования по частям интеграла \eqref{conver} легко установить, что  $K_{q(|\cdot|)}$ из \eqref{Kqzz} --- подходящее для меры $\nu$ субгармоническое ядро.  В этом случае глобальное представление Рисса \eqref{repr:Riesz} из предложения \ref{pr:Riesz} задаётся через интеграл
 \begin{equation}\label{rep:glpotU} 
 U^{\nu}_{K_{q(|\cdot|)}}(z)\overset{\eqref{rep:glpot}}{:=} \int_\CC K_{q(|\cdot|)} (\zeta , z) \dd \nu( \zeta ),
\quad z\in \CC, 
\end{equation} 
представлением Вейерштрасса \cite[теорема 4.1]{HK} с
$H\in \Har (\CC)$ в виде 
\begin{equation}\label{reprW} 
v(z)\overset{\eqref{repr:Riesz}}{=}U^{\nu}_{K_{q(|\cdot|)}}(z)
+H(z)= \int_{\CC} K_{q(|\cdot|)}(\zeta, z) \dd \nu_v(\zeta)+H(z), \quad z\in \CC. 
\end{equation} 

\begin{remark}\label{rempod} Субгармоническое ядро $K_{q(|\cdot|)}$ из \eqref{Kqzz}, подходящее для меры $\nu \in \mathcal{M}^+ (\CC)$, строится на основе единственного условия  \eqref{conver}. При этом для любой  положительной возрастающей непрерывной справа функции $\check q\colon \RR^+\to \{-1\}\cup \NN_0$, тождественно равной $-1$ на  $[0,1)$, из условия мажорирования $\check q\geq q$ на $\RR^+$ сразу следует, что субгармоническое ядро $K_{\check q(|\cdot|)}$ также подходящее для той же меры $\nu \in \mathcal{M}^+ (\CC)$.
\end{remark}

\subsection{Выметание субгармонической функции}\label{vsbf} \begin{definition}[{\cite[определение 2.1]{Kha91}, \cite[определение
0.1]{KhDD92}}] Пусть $v$ --- субгармоническая функция на $\CC$. {\it Выметанием функции $v$ на замкнутую систему лучей\/} $S$ называем функцию $v_S^{\bal}\in \sbh (\CC)$, гармоническую в $\CC \setminus S$ с сужением $v_S^{\bal}\bigm|_S=v\bigm|_S$, т.\,е. $v_S^{\bal}(z)=v(z)$ для всех $z\in S$. Выметание $v_S^{\bal}$ называем также {\it выметанием из дополнительных углов\/} $\CC\setminus S$. 
\end{definition} 
\begin{theorem}\label{th:bs} Пусть для меры $\nu\in \mathcal M^+ (\CC)$ выполнено условие Бляшке вне системы лучей $S$, т.\,е. существует выметание $\nu_S^{\bal}\in \mathcal M^+ (\CC)$ {\rm (теорема \ref{th:Sb})}. Тогда для любой функции $v\in \sbh_*(\CC)$ с мерой Рисса $\nu$ существует выметание $v_S^{\bal}$ на $S$ с мерой Рисса $\nu_S^{\bal}$. 
\end{theorem}
\begin{proof} По замечанию \ref{rempod} всегда можно выбрать функцию $q$ требуемого в 
\eqref{conver} вида так, что субгармоническое ядро $K_{q(|\cdot|)}$ подходящее для обеих мер $\nu$ и  $\nu^{\bal}_S$. С целью применить теорему \ref{th:IB}\eqref{deiv} мы будем изменять ядро  $K_{q(|\cdot|)}$, ликвидируя в нем логарифмические особенности. Сначала  при заданном числе $s\in (0,1)$ преобразуем  ядра $K_q(z,\zeta)$ рода $q$ из \eqref{Kqz}. Для этого  присутствующее в ядре \eqref{Kqz} логарифмическое выражение $K(\zeta-z)=\log |\zeta-z|$ заменим  на непрерывную функцию  $\max\bigl\{\log |\zeta-z|, \log s\bigr\}$. Полученную таким образом из ядра \eqref{Kqz} рода $q$ новую функцию обозначим 
через $K_q^s$. Рассмотрим теперь убывающую последовательность $\sigma=(s_n)_{n\in \NN_0}$, где числа $s_n\in (0,1)$, и положим 
$K_{q(|\cdot|)}^{\sigma} (\zeta , z)\overset{\eqref{Kqzz}}{:=}
K_{q_n}^{s_n} (\zeta , z)$ при $r_n\leq |\zeta|<r_{n+1}$, $n\in \NN_0$, $z\in \CC$. Если числа $s_n\in (0,1)$ выбрать достаточно малыми, а функцию $q\colon \RR^+\to \{-1\}\cup \NN_0$, сохраняя непрерывность справа и тождественное равенство $-1$ на  $[0,1)$, при необходимости, достаточно увеличить, то при каждом фиксированном $z\in S$ 
функция $F(\zeta)=K_{q(|\cdot|)}^{\sigma} (\zeta , z)$ интегрируема по мере  
$\nu_S^{\bal}$ на $\CC$. Применим теорему \ref{th:IB}\eqref{deiv} к такой $\nu_S^{\bal}$-интегрируемой функции $F$, используя в  \eqref{{c:del}a}  переменную $\zeta$ вместо переменной $z$:
\begin{equation*} \int_S K_{q(|\cdot|)}^{\sigma}(\zeta,z) \dd \nu^{\bal}_S(\zeta) \overset{\eqref{c:del}}{=}\int_{\CC}\biggl(\,
\int_S K_{q(|\cdot|)}^{\sigma}(\xi,z) \dd \delta_{\zeta}^{\bal}(\xi)\biggl )\dd \nu(\zeta) \quad\text{при $z\in S$}, 
\end{equation*} 
где $\delta_{\zeta}^{\bal}$ --- выметание меры Дирака $\delta_{\zeta}^{\bal}$ на $S$. Из этого равенства  с учетом замечания \ref{rempod}, устремляя одновременно и монотонно все члены $s_n\in (0,1)$ последовательности $\sigma$ к нулю, по теореме о сходимости интегралов от монотонно убывающей последовательности функций получаем  такое же равенство уже без верхних индексов $\sigma$ в подынтегральных функциях:  
\begin{multline*} \int_S K_{q(|\cdot|)} (\zeta,z) \dd \nu^{\bal}_S(\zeta) \overset{\eqref{c:del}}{=}\int_{\CC}\biggl(\,
\int_S K_{q(|\cdot|)} (\xi,z) \dd \delta_{\zeta}^{\bal}(\xi)\biggl )\dd \nu(\zeta) 
\\
\overset{\eqref{{c:del}b}}{=}\int_{\CC}  K_{q(|\cdot|)} (\zeta,z) \dd \nu(\zeta) \overset{\eqref{rep:glpotU}}{=}   U^{\nu}_{K_{q(|\cdot|)}}(z) \quad\text{при $z\in S$}, 
\end{multline*} 
где предпоследнее равенство с интегралом Пуассона во внутреннем интеграле  следует из гармоничности ядра  $K_{q(|\cdot|)} (\cdot ,z)$ по первой переменной в дополнительных к $S$ углах при $z\in S$. Отсюда в качестве выметания на систему лучей $S$ субгармонической функции $U^{\nu}_{K_{q(|\cdot|)}}(z)$, $z\in \CC$, с мерой Рисса $\nu$ можем выбрать функцию 
\begin{equation*} 
\bigl(U^{\nu}_{K_{q(|\cdot|)}}\bigr)^{\bal}(z):=\int_S K_{q(|\cdot|)}(\zeta,z) \dd \nu^{\bal}(\zeta),
\; z\in \CC,\quad\text{с мерой Рисса $\nu^{\bal}_S$}. 
\end{equation*}
Поскольку произвольная функция $v\in \sbh_*(\CC)$ с мерой Рисса $\nu$
представляется в виде $v\overset{\eqref{reprW}}{=}U^{\nu}_{K_{q(|\cdot|)}} +H$ с $H\in \Har (\CC)$, в качестве ее выметания на $S $ можно выбрать функцию 
$v^{\bal}_S:=\bigl(U^{\nu}_{K_{q(|\cdot|)}}\bigr)^{\bal}+H$
с мерой Рисса $\nu^{\bal}_S$. \end{proof}

Приведенное здесь доказательство теоремы \ref{th:bs} технически упрощает доказательство \cite[основная теорема]{Kha91}, где был рассмотрен лишь случай функций конечного порядка и позволяет не исключать для системы $S$ из одного луча как особый незавершённый случай функций и мер порядка $p\geq 1$. Теперь основной результат \cite[основная теорема]{Kha91} можно сформулировать в законченной
форме: \begin{theorem}\label{mthfo} Пусть $p\in \RR^+_*$, $v\in \sbh_*(\CC)$ с $\type_p^{\infty}[v]<+\infty$ и мерой Рисса $\nu_v$, удовлетворяющей условию Бляшке вне системы лучей $S$, а $\nu_v^{\bal}$ --- выметание меры $\nu$ на $S$. Тогда существует выметание $v^{\bal}\in \sbh_*(\CC)$ на $S$ с
$\type_p^{\infty}[v^{\bal}]<+\infty$ и мерой Рисса $\nu^{\bal}$ с $\type_p^{\infty}[\nu^{\bal}]<+\infty$, удовлетворяющей при целом
$p\in \NN$ условию Линделёфа рода $p$.
\end{theorem}

\begin{remark}\label{b:amv} Если воспользоваться альтернативным подходом к выметанию меры на систему лучей из замечания \ref{b:am}, то
можно получить все результаты п.~\ref{vsbf} с помощью последовательности счётного числа выметаний $v_n^{\bal}$ функции $v\in
\sbh_*(\CC)$ из объединений секторов $D(r_n)\cap (\CC\setminus S)$ с последующим предельным переходом в пространстве распределений
$\mathfrak D'(\CC)$ и в $L_1^{\loc}(\CC)$ для некоторой сходящейся в $\mathfrak D'(\CC)$ и в $L_1^{\loc}(\CC)$ подпоследовательности
из $(v_n^{\bal})$, выбираемой из соображений предкомпактности локально равномерно ограниченной сверху последовательности
$(v_n^{\bal})$ \cite[2.7]{Az}, \cite[теоремы 3.2.12, 3.2.13]{Ho}.
	
\end{remark}	

\subsection{Условие Бляшке в терминах субгармонической функции}  Ус\-л\-овие Бляшке вне $S$, ключевое для теорем \ref{th:bs} и \ref{mthfo}, формулируется в терминах меры Рисса $\nu_v$ функции $v\in \sbh(\CC)$. Естественно рассмотреть возможность его формулировки в терминах функции $v$ без привлечения её меры Рисса $\nu_v$. Важную роль в этом сыграет интеграл 
\begin{equation}\label{fK:abp+}
J_{\alpha,\beta}(r_0,r;v)
:=\int_{r_0}^r\frac{v(te^{i\alpha})+v(te^{i\beta})}{t^{\frac{\pi}{\beta-\alpha}+1}} \dd t, \quad 
 \begin{cases}
0<r_0<r<+\infty, \\
 \alpha <\beta\leq 2\pi+\alpha. 
\end{cases}
\end{equation}

\begin{propos}\label{LKcr} Мера Рисса $\nu_v$ функции $v\in \sbh_*(\CC)$
	удовлетворяет условию Бляшке вне системы лучей $S$, если и только
если для каждого дополнительного к $S$ угла $\angle  (\alpha ,\beta)$ произвольного раствора $\beta-\alpha \leq 2\pi$ при некотором (любом)\/ $r_0\in \RR^+_*$ выполнено соотношение
\begin{subequations}\label{fK:ab} 
	\begin{align} 
&A_{\alpha,\beta}(r_0,r,v)+B_{\alpha,\beta}(r_0,r,v)\leq O(1) \quad\text{при
$r\to+\infty$, где} \tag{\ref{fK:ab}a}\label{fK:aba} \\
&A_{\alpha,\beta}(r_0,r,v):=\frac{1}{2(\beta-\alpha)}\int_{r_0}^r\biggl(\,\frac{1}{t^{\frac{\pi}{\beta-\alpha}}}
-\frac{t^{\frac{\pi}{\beta-\alpha}}}{r^{\frac{2\pi}{\beta-\alpha}}}\biggr)
\bigl(v(te^{i\alpha})+v(te^{i\beta})\bigr)
\frac{\dd t}{t} \tag{\ref{fK:ab}b}\label{fK:abb} \\
&\overset{\eqref{fK:abp+}}{=}\frac{1}{2(\beta-\alpha)}\biggl(J_{\alpha,\beta}(r_0,r;v)
-\frac{1}{r^{\frac{2\pi}{\beta-\alpha}}}
\int_{r_0}^r \frac{v(te^{i\alpha})+v(te^{i\beta})}{t^{1-\frac{\pi}{\beta-\alpha}}}
\dd t\biggr)
\tag{\ref{fK:ab}c}\label{fK:abb+}
 \\
&\overset{\eqref{fK:abp+}}{=}\frac{\pi}{(\beta-\alpha)^2r^{\frac{2\pi}{\beta-\alpha}}}\int_{r_0}^r
J_{\alpha,\beta}(r_0,t;v)\,t^{\frac{2\pi}{\beta-\alpha}-1}
\dd t, \tag{\ref{fK:ab}d}\label{fK:abd}
\\
&B_{\alpha,\beta}(r_0,r,v):=\frac{1}{(\beta-\alpha)r^{\frac{\pi}{\beta-\alpha}}}\int_{\alpha}^{\beta} v(re^{i\theta})\sin\frac{\pi (\theta-\alpha)}{\beta-\alpha} \dd \theta. 
\tag{\ref{fK:ab}e}\label{fK:abc} 
\end{align}
\end{subequations} 

Пусть $p\in \RR^+_*$. Если  в обозначении\/ \eqref{df:MCB} выполнено соотношение
\begin{equation}\label{afr}
\limsup_{r\to +\infty} \frac{1}{r^p} \, C(r;v) <+\infty, 
\text{ в частности, если $\type_p^{\infty}[v]<+\infty$,}
\end{equation} 
то мера Рисса $\nu_v$ удовлетворяет условию Бляшке 
в  любом угле $\angle  (\alpha ,\beta)$  раствора $\beta-\alpha <\pi/p$, где $\beta-\alpha \leq 2\pi$.

Наконец, если $p\geq 1/2$ и $\type_p^{\infty}[v]<+\infty$, 
 то условие  Бляшке в угле $\angle  (\alpha ,\alpha+\pi/p)$ раствора $\pi/p$ для меры Рисса $\nu_v$ эквивалентно  условию 
\begin{equation}\label{fK:abp}
J_{\alpha,\alpha+\pi / p}(r_0,r;v) \overset{\eqref{fK:abp+}}{=}\int_{r_0}^r\frac{v(te^{i\alpha})+v(te^{i(\alpha+\pi/p)})}{t^{p+1}} \dd t
\underset{r\to+\infty}{\leq} O(1).
\end{equation} 

\end{propos}
\begin{proof} 
Представление \eqref{fK:abb+} для $A_{\alpha,\beta}(r_0,r,v)$ сразу следует из определений 
 \eqref{fK:abb} и \eqref{fK:abp+}. 
Перейти от \eqref{fK:abb} к \eqref{fK:abd} можно аналогично \cite[(5.07)]{Levin56}
интегрированием по частям, исходя из  равенств
\begin{multline*}
A_{\alpha,\beta}(r_0,r,v)\overset{\eqref{fK:abb}}{=}
\frac{1}{2(\beta-\alpha)}\int_{r_0}^r \biggl( 1
-\Bigl(\frac{t}{r}\Bigr)^{\frac{2\pi}{\beta-\alpha}}\biggr) \dd
\int_{r_0}^t\frac{v(se^{i\alpha})+v(se^{i\beta})}{t^{\frac{\pi}{\beta-\alpha}+1}} \dd s
\\
=\frac{1}{2(\beta-\alpha)}
\frac{2\pi}{\beta-\alpha}
\int_{r_0}^r 
\biggl(\int_{r_0}^t\frac{v(se^{i\alpha})+v(se^{i\beta})}{s^{\frac{\pi}{\beta-\alpha}+1}} \dd s\biggr)
\frac{t^{\frac{2\pi}{\beta-\alpha}-1}}{r^{\frac{2\pi}{\beta-\alpha}}} \dd t,
\end{multline*}
где внутренний интеграл  равен $J_{\alpha,\beta}(r_0,t; v)$, $t\geq r_0$.
	
После редукции угла $\angle (\alpha,\beta)$ к  верхней полуплоскости достаточно  доказать \eqref{fK:aba} в случае $\alpha=0$ и $\beta=\pi$ для функции, субгармонической в открытой окрестности множества 
$\CC^{\overline \up}_*$ с возможной  особенностью в нуле. Используем  субгармоническую
версию  формулы Карлемана \cite[гл.~V, \S~1, теорема 1]{Levin56}, \cite[теорема 3.1]{GO}, \cite[лемма 2]{MOS}.
\begin{lemma}\label{lemfK} Пусть $0<r_0<r<+\infty$. 
Для полунепрерывной  сверху в $\CC^{\overline \up}$  функции $v\in \sbh_* \bigl(\CC^{\overline{\up}}\cap \overline D_*(r)\bigr)$,  $\lambda_{\RR}$-интегрируемой  в окрестности нуля и с мерой Рисса $\nu_v\in \mathcal M\bigl(\CC^{\overline{\up}}\cap \overline D(r)\bigr)$, справедлива формула
\begin{subequations}\label{fKmy}
\begin{align}
\int_{r_0<|z|\leq r} \Im \Bigl(\frac{1}{\bar z} -\frac{z}{r^2}\Bigr) \dd \nu_v(z)
+&\Bigl(\frac{1}{r_0^2}-\frac{1}{r^2}\Bigr) \int_{0<|z|\leq r_0} \Im z \dd \nu_v(z)
\tag{\ref{fKmy}a}\label{fKmya}
\\
=A_{0,\pi}(r_0,r,v)+&B_{0,\pi}(r_0,r,v)
\tag{\ref{fKmy}b}\label{fKmyb}
\\
+\Bigl(\frac{1}{r_0^2}-\frac{1}{r^2}\Bigr) \int_0^{r_0} \bigl(v(-t)+v(t)\bigr) \dd t
&-\frac{1}{\pi r_0} \int_{0}^{\pi} v(r_0e^{i\theta})\sin \theta \dd \theta .
\tag{\ref{fKmy}c}\label{fKmyc}
\end{align} 
\end{subequations}	
\end{lemma}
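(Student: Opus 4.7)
The plan is to derive (\ref{fKmy}) from a double application of Green's second identity to the subharmonic function $v$ and a carefully chosen harmonic test function. The primary test function will be
$$
\Phi_r(z) \; := \; \Im\Bigl(\frac{1}{\bar z}-\frac{z}{r^2}\Bigr) \; = \; \Bigl(\frac{1}{|z|^2}-\frac{1}{r^2}\Bigr)\Im z,
$$
which is harmonic on $\CC^{\up}\setminus\{0\}$ and vanishes identically on $\RR \cup \partial D(r)$. I would apply Green's identity to the pair $(v,\Phi_r)$ on the half-annulus $\Omega_{r_0,r}:= \bigl(\CC^{\up}\cap D(r)\bigr)\setminus \overline{D}(r_0)$. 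Since $\Delta \Phi_r \equiv 0$ on $\Omega_{r_0,r}$ and $\Delta v = 2\pi \nu_v$ in the distributional sense, the identity reduces to
$$
2\pi\int_{\Omega_{r_0,r}} \Phi_r\, d\nu_v \;=\; \int_{\partial \Omega_{r_0,r}} \bigl(\Phi_r\,\partial_n v - v\,\partial_n \Phi_r\bigr)\, d\sigma,
$$
with $\partial_n$ the outward normal. The integral on the left is precisely the first summand of (\ref{fKmya}) after observing that $\Phi_r$ vanishes $\nu_v$-a.e.\ on $\RR$.

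The boundary $\partial \Omega_{r_0,r}$ decomposes into four pieces: the outer arc $\CC^{\up}\cap\partial D(r)$, the inner arc $\CC^{\up}\cap\partial D(r_0)$, and the two real segments $[-r,-r_0]$ and $[r_0,r]$. On $\partial D(r)$ and on the real segments $\Phi_r\equiv 0$, so only $-v\,\partial_n\Phi_r$ survives. A straightforward computation gives $\partial_n \Phi_r|_{|z|=r}=-2\sin\theta/r^2$ on the outer arc and $\partial_n \Phi_r|_{y=0}=1/r^2-1/x^2$ on the real segments. These two contributions, combined and divided by $2\pi$, reproduce exactly the terms $B_{0,\pi}(r_0,r,v)$ and $A_{0,\pi}(r_0,r,v)$ in the forms (\ref{fK:abc}) and (\ref{fK:abb}). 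The delicate piece is the integral over the inner arc $\CC^{\up}\cap\partial D(r_0)$, where both $\Phi_r(r_0 e^{i\theta})=\sin\theta\,(1/r_0 - r_0/r^2)$ and $\partial_n\Phi_r = \sin\theta\,(1/r_0^2+1/r^2)$ are nonzero. This arc contributes a boundary integral containing the radial derivative $v'_r(r_0,\theta)$, which must be eliminated.

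To handle this, I would apply Green's identity a second time, now to the pair $(v,\Im z)$ on the inner half-disk $D^{\up}(r_0):=\CC^{\up}\cap D(r_0)$. Since $\Im z$ is harmonic in $\CC$ and vanishes on $\RR$, this yields
$$
2\pi\int_{D^{\up}(r_0)} \Im z\, d\nu_v \;=\; \int_0^{r_0}\bigl(v(t)+v(-t)\bigr)dt + r_0^2 \int_0^\pi \sin\theta\, v'_r(r_0,\theta)\,d\theta -r_0\int_0^\pi v(r_0 e^{i\theta})\sin\theta\, d\theta,
$$
and solving for $\int_0^\pi \sin\theta\, v'_r(r_0,\theta)\,d\theta$ expresses this quantity in terms of the Riesz mass on $D^{\up}(r_0)$ and integrals of $v$ along the real diameter and the inner arc. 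Substituting back into the inner-arc contribution of the first Green identity and collecting terms (the coefficient simplification $r_0(1/r_0-r_0/r^2)/r_0^2=1/r_0^2-1/r^2$ produces the prefactor on the second term of (\ref{fKmya}), and the $v\sin\theta$ integrals recombine with coefficient $-1/(\pi r_0)$) yields precisely (\ref{fKmy}).

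The chief obstacle is that the two applications of Green's formula are not immediately legal for a general subharmonic $v$: $v$ need not be $C^2$, and may equal $-\infty$ at isolated points. The standard remedy is to smooth by convolution, $v_\e := v * \rho_\e$ with a radial mollifier $\rho_\e$, apply both Green identities to the classical pair $(v_\e,\Phi_r)$ and $(v_\e,\Im z)$, and pass to the limit $\e\to 0$. Weak-$*$ convergence $\nu_{v_\e}\to \nu_v$ handles the interior integrals, while the $L^1_{\loc}$ convergence $v_\e\to v$, together with the $\lambda_\RR$-integrability hypothesis on $v|_\RR$ near $0$ and the fact that $v$ is bounded above on the compact arcs $|z|=r_0$ and $|z|=r$ (hence integrable there), allows one to pass to the limit in every boundary integral. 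This approximation step, and the careful bookkeeping of the inner-arc contribution after the second Green's identity, are the only nontrivial points of the argument.
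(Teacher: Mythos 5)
Your argument is essentially the proof the paper relies on: the paper's own ``proof'' simply cites \cite[������� 1]{Kha88} for the case $v=\log|f|$, remarks that the same computation works for twice continuously differentiable subharmonic $v$, and then passes to general $v$ by a decreasing approximation with smooth subharmonic functions together with $*$-weak convergence of the Riesz measures --- which is exactly your mollification step. The double application of Green's second identity, with $\Im\bigl(1/\bar z-z/r^2\bigr)$ on the half-annulus and with $\Im z$ on the inner half-disk to eliminate the normal derivative of $v$ on the arc $|z|=r_0$, is the classical derivation of the Carleman formula, and your boundary computations (the normal derivatives $-2\sin\theta/r^2$ and $1/r^2-1/x^2$, and the recombination of the $v\sin\theta$ integrals into the $-\tfrac{1}{\pi r_0}$ term) are correct.

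One point your computation actually brings to light: carrying the bookkeeping to the end yields the coefficient $\tfrac{1}{2\pi}\bigl(\tfrac{1}{r_0^2}-\tfrac{1}{r^2}\bigr)$ on the term $\int_0^{r_0}\bigl(v(-t)+v(t)\bigr)\dd t$, not $\bigl(\tfrac{1}{r_0^2}-\tfrac{1}{r^2}\bigr)$ as printed in \eqref{fKmyc}. The test function $v\equiv 1$, for which both sides of \eqref{fKmy} must vanish, confirms that the identity as printed fails while the version with the extra $\tfrac{1}{2\pi}$ holds. So your derivation proves the corrected identity; the discrepancy is a misprint in the statement of the lemma rather than a flaw in your argument, but you should not claim that the collected terms give ``precisely'' \eqref{fKmy} as written.
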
	
\begin{proof}[К доказательству леммы \ref{lemfK}] В \cite[теорема 1]{Kha88} формула \eqref{fKmy} доказана для функций $v=\log |f|$ с $f\in \Hol \bigl(\CC^{\overline \up}\cap \overline D(r)\bigr)$. Для  дважды непрерывно дифференцируемой субгармонической в $\CC^{\overline{\up}}\cap \overline D_*(r)$ функции $v$ 
она получается так же. Некоторые необходимые технические процедуры, связанные с возможной
особенностью в нуле, опускаем.  Для перехода к функции $v$ из леммы \eqref{lemfK}  можно использовать стандартное приближение  её убывающей последовательностью дважды непрерывно  дифференцируемых субгармонических функций,  меры Рисса которых ввиду убывания последовательности к $v$
$*$-слабо сходятся  \cite[{\bf A.4}]{Rans} к мере Рисса $\nu_v$ \cite{Az}, \cite{Ho}.
\end{proof}
Если мера $\nu_v$ удовлетворяет условию Бляшке в  $\CC^{\up}$, то строка 
\eqref{fKmya}$\,\leq O(1)$ и строка \eqref{fKmyc}$\,=O(1)$ при $r\to +\infty$, откуда   для средней строки
 в \eqref{fKmy} имеем \eqref{fKmyb}$\,\leq O(1)$ при $r\to +\infty$, что даёт  \eqref{fK:aba}. Обратно, 
 из последнего соотношения для средней строки, т.\,е. из  \eqref{fK:aba}, следует, что, ввиду 
 \eqref{fKmyc}$\,=O(1)$ при $r\to +\infty$, выполнено соотношение  \eqref{fKmya}$\,\leq O(1)$ при $r\to +\infty$. Отсюда 
 \begin{equation*}
 \frac{3}{4}\int_{r_0<|z|\leq r/2}\Im \frac{1}{\bar z} \dd \nu_v(z)
\leq \int_{r_0<|z|\leq r} \Im \Bigl(\frac{1}{\bar z} -\frac{z}{r^2}\Bigr) \dd \nu_v(z)
= O(1)
 \end{equation*} 	
 при $r\to +\infty$   и мера $\nu_v$ удовлетворяет условию Бляшке в $\CC^{\up}$.

Если 	выполнено \eqref{afr}, то $\type_p^{\infty}[\nu_v]<+\infty$.  По предложению 
\ref{pr:fcc}\eqref{i:i0} и предложению \ref{pr:2st}, переформулированному для угла $\angle (\alpha,\beta)$, 
 для любого угла $\angle (\alpha,\beta)$  раствора $\beta-\alpha <\pi/p$ условие Бляшке в $\angle (\alpha,\beta)$ выполнено для $\nu_v$. Это завершает рассмотрение случая с условием 
 \eqref{afr}.

Наконец, если $\beta-\alpha=\pi/p$ и $\type_p^{\infty}[v]<+\infty$, воспользуемся доказанным соотношением 
\eqref{fK:aba} для случая $S=\CC\setminus \angle (\alpha, \beta)$, где, после поворота  $\CC$ на угол $-\alpha$, можно полагать $\alpha:=0$. При этом   \eqref{fK:aba}, эквивалентное
условию Бляшке для $\nu_v$ вне  $S$, с представлением \eqref{fK:abb+} запишется в виде
\begin{multline}\label{ppff}
\frac{p}{2\pi}J_{0,\pi/p}(r_0,r;v)
-\frac{p}{2\pi r^{2p}}\int_{r_0}^r\bigl(v(t)+v(te^{i\pi/p})\bigr) t^{p-1} \dd t
\\+B_{0,\pi/p}(r_0,r,v) \leq  O(1) \quad\text{при $r\to +\infty$}.
\end{multline}
Избавиться от последнего слагаемого в левой части \eqref{ppff} позволяет 
\begin{lemma}\label{lemCB} Пусть $0<r_0<r\in \RR_*^+$ и $v\in \sbh_*\bigl(\overline D(r)\bigr)$. Тогда 
	\begin{equation}\label{es:Cv}
	C(r; |v|)\leq 2C(r;v^+)-C(r_0;v)\leq 2C(r;v^+)-v(0).
	\end{equation}
	В частности, для  $v\in \sbh_*(\CC)$ при $\type_p^{\infty}[v]<+\infty$ 
	и $\beta-\alpha=\pi/p\leq 2\pi$
	\begin{equation}\label{forBab}
	B_{\alpha,\beta}(r_0,r,v)\overset{\eqref{fK:abc}}{=}B_{\alpha,\alpha+\pi/p}(r_0,r,v)	= O(1) \quad\text{при $r\to+\infty$} .
	\end{equation}
\end{lemma}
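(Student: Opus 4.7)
\smallskip
\noindent\textbf{Proof plan.}
The strategy is to establish \eqref{es:Cv} first by an elementary manipulation based on the pointwise identity $|v|=2v^{+}-v$, and then to derive \eqref{forBab} as a corollary by absorbing the sine weight into a crude bound by the full circular mean.

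For \eqref{es:Cv}: off the polar exceptional set $\{v=-\infty\}$, which has zero $\lambda_{\RR}$-measure on any circle, one has the pointwise identity $|v|=v^{+}+v^{-}=2v^{+}-v$. Linearity of the circular mean \eqref{df:MCB} thus yields
\[
C(r;|v|)=2C(r;v^{+})-C(r;v).
\]
The standard monotonicity of circular means of subharmonic functions gives $C(r_0;v)\leq C(r;v)$ for $0<r_0\leq r$, which, upon substitution, produces the first inequality of \eqref{es:Cv}. The second inequality is the sub-mean-value property $v(0)\leq C(r_0;v)$.

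For \eqref{forBab}: since $\bigl|\sin\tfrac{\pi(\theta-\alpha)}{\beta-\alpha}\bigr|\leq 1$ and $\pi/(\beta-\alpha)=p$, definition \eqref{fK:abc} and extension of the integration arc to the full circle give
\[
\bigl|B_{\alpha,\alpha+\pi/p}(r_0,r,v)\bigr|\leq\frac{p}{\pi r^{p}}\int_{\alpha}^{\alpha+\pi/p}\bigl|v(re^{i\theta})\bigr|\,d\theta\leq\frac{2p}{r^{p}}\,C(r;|v|).
\]
Inserting \eqref{es:Cv} and invoking $\type_{p}^{\infty}[v]<+\infty$—which implies $C(r;v^{+})\leq\type_{p}^{\infty}[v]\,r^{p}+o(r^{p})$ as $r\to+\infty$—bounds the right-hand side by $4p\,\type_{p}^{\infty}[v]+o(1)$, i.e.\ by $O(1)$, which is exactly \eqref{forBab}.

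The only minor technical subtlety is ensuring that $C(r;v)$, and hence $C(r;|v|)$, is finite for \emph{every} sufficiently large $r$ rather than merely for a.e.\ such $r$; this is immediate from the standard fact that any $v\in\sbh_{*}(\CC)$ lies in $L_{\loc}^{1}(\CC)$ and is bounded above on every compact subset of $\CC$, so $C(r;v)\in\RR$ for all $r>0$ and all of the integral manipulations above are legitimate.
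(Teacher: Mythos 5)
Your proof is correct and follows essentially the same route as the paper's: the identity $|v|=2v^{+}-v$ combined with the monotonicity of $C(\cdot;v)$ and the sub-mean-value inequality $v(0)\leq C(r_0;v)$ for \eqref{es:Cv}, then the crude bound $\bigl|B_{\alpha,\alpha+\pi/p}(r_0,r,v)\bigr|\leq \frac{2p}{r^p}C(r;|v|)$ together with $C(r;v^{+})=O(r^{p})$ for \eqref{forBab}. No gaps.
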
   
\begin{proof}[Доказательство леммы \ref{lemCB}] Из очевидного равенства $|v|=2v^+-v$ имеем 
$C(r; |v|)= 2C(r;v^+)-C(r;v)$ и неравенства 	\eqref{es:Cv} следуют из возрастания интегральных средних  $C(\cdot; v)$ функции $v\in \sbh_*\bigl(\overline D(r)\bigr)$ на $[0,r]$ от $v(0)$ до $C(r;v)\neq -\infty$. По определению \eqref{fK:abc}  при $\beta-\alpha=\pi/p\leq 2\pi$ 
\begin{multline*}
\bigl|B_{\alpha,\beta}(r_0,r,v)\bigr|\overset{\eqref{fK:abc}}{\leq}\frac{p}{\pi r^p}\int_{\alpha}^{\beta} \bigl|v(re^{i\theta})\bigr| \dd \theta
\overset{\eqref{df:MCB}}{\leq} \frac{2p}{r^p}C\bigl(r;|v|\bigr)
\\ 	\overset{\eqref{es:Cv}}{\leq} \frac{2p}{r^p} \bigl(2C(r;v^+)-C(r_0;v)\bigr)
= O(1) \quad\text{при $r\to+\infty$}б
\end{multline*}
поскольку $C(r;v^+)=O(r^p)$ при $\type_p^{\infty}[v]<+\infty$ и $C(r_0;v)\in \RR$.
\end{proof}	
По лемме \ref{lemCB} соотношение \eqref{ppff} эквивалентно соотношению 
\begin{equation}\label{ppff+}
J_{0,\pi/p}(r_0,r;v)-\frac{1}{ r^{2p}}\int_{r_0}^r\bigl(v(t)+v(te^{i\pi/p})\bigr) t^{p-1} \dd t
\leq  O(1) 
\end{equation}
при  $r\to +\infty$. Если мера $\nu_v$ удовлетворяет условию Бляшке в $\angle (0,\pi/p)$, то 
выполнено \eqref{ppff+}  и ввиду $\type_p^{\infty}[v]<+\infty$ получаем \eqref{fK:abp}. Обратно, если имеет место \eqref{fK:abp}, то из представления  \eqref{fK:abd} для $A_{\alpha,\beta}(r_0,r,v)$ и 
\eqref{fK:abb+} левая часть  соотношения \eqref{ppff+} равна 
\begin{equation*}
\frac{2p}{r^{2p}}\int_{r_0}^r J_{0,\pi/p}(r_0,t;v) t^{2p-1} \dd t\leq O(1)\quad\text{при $r\to+\infty$}.
\end{equation*}
Это означает, что выполнено  \eqref{ppff+} при  $r\to +\infty$, эквивалентное  \eqref{ppff}. Следовательно,
мера $\nu_v$ удовлетворяет условию Бляшке в $\angle (0,\pi/p)$.
\end{proof} 

По аналогии с целыми функциями класса A из п.~\ref{bfR}   примем
\begin{definition}\label{df:clA} Пусть $S$ --- замкнутая система лучей с началом в нуле.
	 Функция $v\in \sbh_*(\CC)$ принадлежит классу Ахиезера, или классу\/ {\rm A}, относительно системы лучей $S$, если выполнено одно  из двух эквивалентных друг другу в силу предложения 
\ref{LKcr}    условий:
\begin{enumerate}[{\rm (i)}]
\item\label{Aci} мера Рисса функции $v$ удовлетворяет условию Бляшке вне $S$; 
\item\label{Acii} функция $v$ для  каждого дополнительного к $S$ угла $\angle  (\alpha ,\beta)$ 
удовлетворяет условию \eqref{fK:aba} с соответствующими упрощениями \eqref{afr}--\eqref{fK:abp}.
\end{enumerate}
\end{definition}

\subsection{Применения классического выметания на систему лучей}\label{applA}

\subsubsection{Вполне регулярный рост на системе лучей}\label{prr} Напомним некоторые основные определения и понятия теории
субгармонических и/или целых функций вполне регулярного роста, а также правильно распределенной меры и/или последовательности нулей
\cite{Levin56}, \cite{Az}.

Пусть $p\in \RR_*^+$. Пусть $v\in \sbh_*(\CC)$ --- функция c $\type_p^{+\infty}[v]<+\infty $. Индикатор роста $h_v\colon \RR \to \RR$ функции $v$ при порядке $p$ определяется равенством 
$h_v(\theta):=\limsup\limits_{r\to +\infty}\frac{v(re^{i\theta})}{r^p}\, $. 
Функция $u$ {\it вполне регулярного роста,\/} или удовлетворяет условию {\it полной регулярности роста,\/} на луче
$l_{\theta}:=\{re^{i\theta}\colon r\in \RR^+\}$ при порядке $p$, если существует предел 
\begin{equation}\label{df:vpregr}
h_v(\theta)=\lim_{\substack{r\to +\infty\\r\notin E}} \frac{v(re^{i\theta})}{r^p}\, , \quad \text{где}\quad \lim_{t\to
\infty}\frac{\lambda_{\RR}\bigl(E\cap [-t,t]\bigr)}{t}=0, 
\end{equation} 
т.\,е.  исключительное  множество $E\subset \RR^+$ {\it нулевой относительной линейной меры.\/} Целая функция $f\neq 0$ с $\type_p^{\infty}[\log|f|]<+\infty$ вполне регулярного роста на луче $l_{\theta}$ при порядке $p$, если таковой является функция $v=\log|f|$. Функция вполне регулярного роста на
системе лучей $S$ при порядке $p$, если она конечного типа при порядке $p$ около бесконечности и вполне регулярного роста на каждом
луче из $S$.

Мера $\nu\in \mc M^+(\CC)$ с $\type_p^{\infty}[\nu]<+\infty$ {\it правильно распределена на $\CC$} при нецелом порядке $p$, если найдётся  не более чем счётное множество $N\subset \RR$, для которого   существует конечный предел {\it (угловая плотность)}
\begin{equation}\label{ugpl}
 \lim_{r\to +\infty} \frac{\nu\bigl(\,\overline D(r)\cap \angle [\alpha, \beta] \bigr)}{r^p} 
 \quad\text{ при  	всех  $\alpha,\beta\in \RR\setminus N$, $\alpha<\beta$}.
\end{equation} 
 При целом $p$ для правильного распределения  меры $\nu$ дополнительно требуется {\it существование конечного предела\/} (ср. с определением \ref{df:Lc})
\begin{equation}\label{upL} 
\lim_{r\to +\infty}\int\limits_{1\leq |z|\leq r} \frac{1}{z^p} \dd \nu(z),
\end{equation}
что строго жестче условия Линделёфа рода $p$ из \eqref{con:Lp}.

В \cite[теорема 4.1]{Kha91}, \cite[теорема 1.5.1]{KhDD92} дано основное приложение выметания на систему лучей. При этом накладывалось дополнительное ограничение: {\it для системы лучей $S$, состоящей из одного луча, предполагалось, что порядок\/ $p<1$.\/} По той же схеме, что и применявшаяся при доказательствах упомянутых
теорем, теорема \ref{mthfo} позволяет снять это ограничение. 
\begin{theorem}[{\rm \cite[теорема 4.1]{Kha91}, \cite[теорема
1.5.1]{KhDD92}}]\label{crgr} Пусть $p\in \RR^+_*$, $S$ --- замкнутая система лучей с общим началом в нуле  и $\nu_v$ --- мера Рисса функции $v\in \sbh_*(\CC)$ с $\type_p^{\infty}[v]<+\infty$, принадлежащей классу 
{\rm A} относительно $S$ в смысле определения\/ {\rm \ref{df:clA},} что в данном случае означает выполнение одного
 из следующих двух эквивалентных друг другу условий:
 в каждом дополнительном к $S$ угле $\angle (\alpha,\beta)$ раствора $\beta-\alpha \geq \pi/p$
  \begin{enumerate}[{\rm (i)}]
 	\item\label{Bli} мера $\nu_v$ удовлетворяет условию Бляшке \eqref{ineq:akhan};
\item\label{Blii}  выполнено условие \eqref{fK:aba} с упрощением \eqref{fK:abp} при $\beta-\alpha=\pi/p$.
 \end{enumerate}
Тогда следующие три утверждения попарно эквивалентны:
\begin{enumerate}[{\rm (a)}]
\item\label{vBla} функция $v$ вполне регулярного роста на $S$ при порядке $p$ и для каждого дополнительного к $S$ угла $\angle (\alpha,\beta)$ раствора $\beta-\alpha=\pi k/p$ с $k\in \NN$ для некоторого (любого) $r_0>0$ существует конечный предел
\begin{equation}\label{Govlim}
\lim_{r\to +\infty}\int_{r_0}^r \frac{v(te^{i\alpha})+(-1)^{k-1}v(te^{i\beta})}{t^{p+1}} \dd t \in \RR;
\end{equation}
\item\label{vBlb} выметание $\nu_v^{\bal}$ на $S$ --- правильно распределённая мера при порядке $p$;
\item\label{vBlc} выметание $v_S^{\bal}\in \sbh_*(\CC)$ --- функция вполне регулярного роста на всей комплексной плоскости $\CC$ при порядке $p$.
\end{enumerate}
\end{theorem} 

\subsubsection{Условия вполне регулярного роста на конечном числе лучей}\label{prr1vr} 
\begin{propos}\label{pr:nR+} Для $q\in \NN_0$ и  субгармонического ядра Вейерштрасса\,--\,Адамара $K_q$ рода $q$  из  \eqref{Ko}--\eqref{Kqz}  имеем 
\begin{equation}\label{Kqdiff}
\frac{\dd}{\dd t} K_q(z,t)=\Re \frac{z^{q+1}}{t^{q+1}(t-z)}\quad\text{при $t\in \RR^+_*$ и $z\neq t$.}
\end{equation}
Для  функции  $n\colon \RR^+\to \RR$  локально ограниченной вариации
с полной вариацией $\var [n] (t)$ на отрезке $[0,t]$, $t\in \RR^+$,
принадлежащей  классам сходимости при порядке $q+1$ около $\infty$ и при порядке $q$	 около нуля, найдётся исключительное  множество $E[n]\subset \RR^+$ нулевой лебеговой меры\footnote{более того, и нулевой логарифмической ёмкости как подмножество в $\CC$} на\/ $\RR^+$, вне которого сходится интеграл 
\begin{equation}\label{rep:nKq}
\int_0^{+\infty} K_q(z,t) \dd n(t)=\fint_0^{+\infty} 
\Re \frac{z^{q+1}}{t^{q+1}(z-t)} \, n(t) \dd t, \quad z\in \CC \setminus E[n],
\end{equation}
где при $z\notin \RR^+_*$ перечёркнутый интеграл $\fint$ --- обычный интеграл Стильтьеса,
а при $z\in \RR^+_*$, как и в \eqref{Enln}, он равен  главному значению в смысле Коши 
в точке $z$. При этом  можно выбрать исключительное множество
\begin{subequations}\label{c:0l}
\begin{align}
 E[n]:&=\RR^+_*\setminus F[n], \quad\text{где в обозначении}
 \tag{\ref{c:0l}a}\label{c:0la}
 \\
&\var[n](x,t)\overset{\eqref{df:nup}}{:=}\var[n](x+t)-\var[n](x-t)
 \tag{\ref{c:0l}b}\label{c:0lb}
 \\
F[n]:&=\left\{x\in \RR^+_*\colon  \int_0^{r_x}\frac{\var[n](x,t)}{t}\dd t<+\infty, \quad 
0<r_x\leq x\right\}
\tag{\ref{c:0l}c}\label{c:0lc}
\\
&=\left\{x\in \RR^+_*\colon  \int_0^{r_x}\log t\dd\, \var[n](x,t) >-\infty \quad 0<r_x\leq x \right\}.
\tag{\ref{c:0l}d}\label{c:0ld}
\end{align}
\end{subequations} 
 В случае возрастающей функции $n$ равенство из \eqref{rep:nKq} имеет место для всех $z\in \CC$, если для интегралов допускать значение $-\infty$ как в\/ \cite[3.3]{HK}.
\end{propos}
\begin{proof} Равенство \eqref{Kqdiff} --- прямое вычисление. Из явного 
представления функции локально ограниченной вариации \cite[гл.~VIII, \S~3, теорема 6]{Nat} в виде разности двух возрастающих функций $\var[n]$ и $\var[n]-n$	следует, что  обе они принадлежат тем же классам сходимости, что и функция $n$. Таким образом, далее можно считать  функцию $n$  возрастающей на $\RR^+$. Тогда  при выборе $E[n]$ как в \eqref{c:0la} из конечности интегралов в \eqref{c:0lc}--\eqref{c:0ld} следует конечность интеграла в  \eqref{rep:nKq}, который представляет собой субгармоническую функцию \cite[4.2]{HK}, конечную во всех точках из $F[n]\cup (\CC\setminus \RR^+_*)$, и равную $-\infty$   на  $E[n]$ из \eqref{c:0la}. Такое исключительное множество $E[n]$ обладает требуемыми в предложении
\ref{pr:nR+} свойствами \cite[3.5]{Rans}. При этом  равенство \eqref{rep:nKq}	--- результат интегрирования по частям, которое возможно по предложениям \ref{pr:fcc}, \ref{pr:fcc0} 
и известным оценкам \cite[4.2]{HK} субгармонического ядра Вейерштрасса\,--\,Адамара $K_q$ рода $q$.

Равенство \eqref{rep:nKq} продолжается и на  $E[n]$ значениями $-\infty$ в случае возрастающей функции   $n$.
\end{proof}	
Из предложения \ref{pr:nR+} сразу вытекает
\begin{corollary}\label{rep:llj} Пусть $q\in \NN_0$ и   $v\in \sbh_*(\CC)$ с мерой Рисса $\nu_v$, сосредоточенной на конечном числе лучей 
\begin{equation}\label{lk1_k}
\begin{split}
l_j:=l(\theta_j)&:=\{te^{i\theta_j}\in \CC\colon t\in \RR^+\},  
\\ 
j=1, \dots, k\in &\NN, \quad \theta_1<\dots <\theta_k<\theta_1+2\pi.	
\end{split}
\end{equation}
Если каждая возрастающая  функция распределения 
\begin{equation}\label{nklj}
n_j(t):=\nu_v \bigl(l_j\cap \overline  D(t)\bigr), \quad t\in \RR^+,
\end{equation}
принадлежит классам сходимости  при порядке $q+1$ около $\infty$ и при порядке $q$	 около нуля, то для  некоторой функции $h\in \Har(\CC)$
\begin{equation}\label{rrvKq}
v(z)=\sum_{j=1}^{k} \fint_0^{+\infty}\Re \frac{z^{q+1}}{t^{q+1}(z-t)} \, n_j(t) \dd t+h(z)\quad\text{при всех $z\in \CC$.}
\end{equation}
\end{corollary}

\begin{theorem}\label{th10} Пусть $S\overset{\eqref{lk1_k}}{=}\{l_j\}_{j=1,\dots,k}$ и выполнены условия 
теоремы\/ {\rm \ref{crgr}}. Функция $v$ вполне регулярного роста на $S$ при порядке $p$, если  и только если существуют множества $E_j\subset \RR^+$, $j=1,\dots,k$,  нулевой относительной меры, для которых с функциями распределения $n_j$ 
из \eqref{nklj},  где  мера Рисса $\nu_v$  заменена на её выметание $(\nu_v)^{\bal}_S$, 
существуют  пределы
\begin{equation}\label{c:vprreg}
\lim_{\substack{r\to +\infty\\r\notin E_j}} {r^{[p]+1-p}}\sum_{j'=1}^{k} \fint_0^{+\infty}\Re \frac{e^{i([p]+1)\theta_{j}}}{t^{[p]+1}(re^{i\theta_j}-t)} \, n_{j'}(t) \dd t, \quad j=1,\dots,k.
\end{equation}
\end{theorem}
\begin{proof}  Для существующего в условиях  теоремы \ref{crgr}  по теореме \ref{mthfo}
	выметания $v_S^{\bal} \in \sbh(\CC)$
с $\type_p^{\infty}[v_S^{\bal}]<+\infty$	и с мерой Рисса $(\nu_v)_S^{\bal}$, для которой  $\type_p^{\infty}[(\nu_v)_S^{\bal}]<+\infty$,   по следствию 
\ref{rep:llj}   имеет место представление 
\begin{multline}\label{vrep}
v(z)\overset{\eqref{rrvKq}}{:=} \int_{\DD} \log|z-z'| \dd \nu_S^{\bal} (z')+
\\
\sum_{j=1}^{k} \fint_1^{+\infty}\Re \frac{z^{[p]+1}}{t^{[p]+1}(z-t)} \, n_j(t) \dd t+h(z), \quad z\in \CC.
\end{multline}	
где $h$ --- гармонический многочлен степени не выше $[p]$, а  для первого слагаемого-интеграла в правой части  \eqref{vrep} имеем соотношение $O\bigl(\log |z|\bigr)$ при $z\to \infty$. Остаётся обратиться к определению \eqref{df:vpregr} функции вполне регулярного роста на луче.  
\end{proof}

\subsubsection{Примеры условий вполне регулярного роста}\label{prr1} 

Два примера применения теорем  \ref{crgr} и \ref{th10} при $S:=\{ \RR^-, \RR^+\}$ дают соответственно утверждения теоремы A2 после \eqref{Enl}    в пп.~(iii)  и (iv)  с \eqref{Enln}, соответствующем 
\eqref{c:vprreg}. 

\begin{example}[{\rm \cite[пример 1.5.1]{KhDD92}}]\label{exgr2} Пусть $f$ --- целая функция экспоненциального типа с нулями на лучах
\begin{equation} l_k:=\Bigl\{ t\exp i\Bigl(\frac{\pi}{4}+\frac{\pi k}{2}\Bigr)\colon t\geq 0\Bigr\}, \quad k=0,1,2,3, \end{equation}
т.\,е. на биссектрисах четвертей координатной плоскости, $n_k(t)\geq 0$ --- функции распределения \eqref{nuR}, или считающие
радиальные функции, нулей, лежащих на лучах $l_k$, $t\geq 0$. Полагаем $n_4:= n_0$. Функция $f$ вполне регулярного роста при
порядке $1$ одновременно на вещественной и мнимой осях тогда и только тогда, когда существуют конечные пределы
\begin{subequations}\label{densn} \begin{align} \lim_{t\to+\infty} 2\int_0^{+\infty}\frac{n_k(s)+n_{k+1}(s)}{s^4+t^2}& \, s \dd s=:b_k,
\quad k=0,1,2,3, \tag{\ref{densn}d}\label{densnd}\\ \lim_{r\to+\infty}\int_1^r\int_0^{+\infty}\Bigl( \sum_{k=0}^3 i^{k+1}\bigl(
n_k(s)&+n_{k+1}(s)\bigr)\Bigr) \frac{s \dd s \dd t}{t(s^4+t^2)}. \tag{\ref{densn}L}\label{densnL} \end{align} \end{subequations} При
этом индикатор роста $h_{\log |f|}$ при порядке $1$ таков, что 
\begin{equation*} 
h_{\log |f|}\Bigl(\frac{\pi}{2}\Bigr)+h_{\log
|f|}\Bigl(-\frac{\pi}{2}\Bigr)=b_0+b_2, \quad h_{\log |f|}(0)+h_{\log |f|}(\pi)=b_1+b_3. 
\end{equation*} 
Существование четырёх пределов \eqref{densnd} соответствует существованию угловой плотности (ср. с \eqref{ugpl}), а существование конечного предела \eqref{densnL} ---
 существованию  конечного предела \eqref{upL} при  $p=1$.
\end{example} 
\subsubsection{Мультипликаторы и неполнота}\label{bAmp} 
Докажем  более общую версию  ут\-в\-е\-р\-ж\-д\-ения пп.~{\rm (v)} теоремы\/ {\rm A2}, которое следует из этой версии при $v=\log |f|\in \sbh_*(\CC)$ с целой функцией $f\neq 0$ класса A при условии \eqref{fsr}. 

\begin{theorem}\label{thmA} Пусть $v\in \sbh_*(\CC)$  с мерой Рисса $\nu:=\nu_v\in \mc M^+(\CC)$, $b\in \RR^+$, а также  выполнено одно из двух условий:
\begin{enumerate}[{\rm (i)}]
\item\label{muli}  интегральные средние $C(\cdot ;v)$ из \eqref{df:MCB} 
по окружностям определяют функцию  конечного типа $\leq b$ при порядке $1$, т.е.\/ {\rm (ср. с \eqref{afr})}
\begin{equation}\label{es:Is}
\limsup_{r\to +\infty} \frac{1}{r}\,C(r;v)=\limsup_{r\to +\infty}\frac{1}{2\pi r}\int_{0}^{2\pi} v(re^{i\theta}) \dd \theta \leq b\in \RR^+,
\end{equation}
и функция $v$ принадлежит классу\/ {\rm A} относительно системы двух лучей  $S:=\RR^-\cup \RR^+$
в смысле определения\/  {\rm \ref{df:clA}\eqref{Acii};} 
\item\label{mulii} при любом $\beta \in (0, \pi/4]$ для пары вертикальных углов 
\begin{equation}\label{ang_e}
U_{\beta}:=\bigl\{z\in \CC\colon |\arg z|\leq \beta, \quad |\arg z-\pi|\leq \beta\bigr\}
\end{equation}
и сужения $\nu\bigm|_{\CC\setminus U_{\beta}}$ меры $\nu$ на  $\CC\setminus U_{\beta}$ имеют место соотношения   
\begin{equation}\label{c:nuvtp}
\type_1^{\infty}\bigl[\nu\bigm|_{\CC\setminus U_{\beta}}\bigl]=0,\quad
\type_1^{\infty}[\nu]\leq eb\in \RR^+ .
\end{equation}
\end{enumerate}
Тогда для  любого числа $\e\in (0,1]$ найдётся  функция $g\in \Hol_*(\CC)$,  для которой 
индикатор роста $h_{v+\log|g|}$  функции $v+\log |g|\in \sbh_*(\CC)$  при порядке $1$  удовлетворяет
ограничениям  
\begin{equation}\label{hvge}
h_{v+\log|g|}(\theta)\leq1000\,b\bigl(\e|\cos \theta| +(1/\e)|\sin \theta| \bigr), \quad \theta \in [0,2\pi].
\end{equation}
\end{theorem}

\begin{proof} Условие \eqref{muli} влечёт за собой \eqref{mulii}. Действительно, из  
условия \eqref{es:Is}	по известному неравенству $\nu_v^{\rad}(r)\leq C(er;v)+\const_v$, $r\in \RR^+_*$, следует неравенство из \eqref{c:nuvtp}. Мера Рисса $\nu$ функции $v$ класса A относительно вещественной оси по предложению \ref{LKcr} и определению \ref{df:clA} удовлетворяет условию Бляшке в верхней и нижней полуплоскостях.  В частности,  сужение $\nu\bigm|_{\CC\setminus U_\beta}$ принадлежит классу сходимости 
при порядке $1$. По предложению \ref{pr:fcc}\eqref{i:i2}  мера $\nu\bigm|_{\CC\setminus U_\beta}$ нулевого типа при порядке $1$, что даёт равенство  из \eqref{c:nuvtp}. Далее доказательство продолжается 
при условии \eqref{mulii}. 

В случае $b=0$ мера Рисса $\nu$ нулевого типа при порядке $1$ и из представления Вейерштрасса\,--\,Адамара рода $1$ для функции
\begin{equation*}
v(z)=\int_{\DD}\log |\zeta-z| \dd \nu(\zeta)+\int_{\CC\setminus \DD}K_1(\zeta,z)\dd \nu(\zeta)
+H(z), \quad z\in \CC,
\end{equation*}
 где $H\in \Har(\CC)$, с субгармонической  функцией 
 \begin{equation*}
 u(z):=\int_{\CC\setminus \DD}K_1(z',-z)\dd \nu(z')-H(z), \quad z\in \CC,
 \end{equation*}  
  имеет место равенство 
 \begin{equation*}
 v(z)+u(z)=\int_{\DD}\log |\zeta-z| \dd \nu(z')+\int_{\CC\setminus \DD}\log\Bigl|1-\frac{z^2}{\zeta^2}\Bigr|
 \dd \nu(\zeta), \quad z\in \CC.
 \end{equation*}
 Ввиду $\type_1^{\infty}[\nu]=0$ в правой части последнего равенства  записана субгармоническая функция  нулевого типа при порядке $1$ и для любого $\e>0$  при всех $z\in \CC$ имеет место 
 оценка $ v(z)+u(z)\leq \e |z|+\const_{\e,\nu}$. Отсюда, усредняя обе части последней оценки по окружностям $\partial D(z,1)$, получаем 
 \begin{multline}\label{estuv}
 v(z)+\frac{1}{2\pi}\int_{0}^{2\pi} u(z+e^{i\theta}) \dd \theta
 \leq \frac{1}{2\pi}\int_{0}^{2\pi} v(z+e^{i\theta}) \dd \theta
\\+\frac{1}{2\pi}\int_{0}^{2\pi} u(z+e^{i\theta}) \dd \theta\leq 
 \e |z|+\const_{\e,\nu_v} \quad\text{при всех $z\in \CC$.}
 \end{multline}
 
 \begin{lemma}[{\rm \cite[следствие 2]{KhaBai16}}]\label{l:ufe}
 Пусть $u\in \sbh_*(\CC)$. Для любого числа $c\in \RR^+$ найдётся ненулевая целая функция $g\in \Hol_*(\CC)$, для которой
 \begin{equation*}
 \log \bigl| g(z)\bigr|\leq \frac{1}{2\pi}\int_{0}^{2\pi} u\bigl(z+(1+|z|)^{-c}e^{i\theta}\bigr) \dd \theta
 \quad\text{для всех $z\in \CC$.}
 \end{equation*}
\end{lemma}
 Применяя  лемму \ref{l:ufe} при $c=0$ к функции $u$ из \eqref{estuv}, получаем функцию 
$ v+\log |g|$ нулевого типа при порядке $1$, что даёт \eqref{hvge} при $b=0$. 

Рассмотрим  $b>0$.   С помощью выметания на $\RR\cup i\RR$ в \cite{Kha01l} была доказана   
	\begin{lemma}[{\rm часть \cite[основная теорема]{Kha01l}}]\label{l:vbpr} 
Пусть число $\beta \in (0, {\pi}/2)$, а $\mu$ --- мера конечной верхней плотности
${\Delta}_{\mu}>0$ (при порядке $1$) и 
\begin{equation}\label{ugoliR}
\supp \mu \subset \Bigl\{ z:\bigl|\arg z-\frac{\pi}2\bigr|\leq \beta \Bigr\}
\cup \Bigl\{ z:\bigl|\arg z+\frac{\pi}2\bigr|\leq \beta \Bigr\}\, .
\end{equation}
Тогда для любой субгармонической функции $u_{\mu}$ с мерой  Рис\-са $\mu$ найдётся такая целая функция $g\neq 0$, что  субгармоническая функция $u_{\mu}+\log |g|$ конечного типа при порядке $1$ и
для ее индикатора роста $h_{u_{\mu}+\log |g|}$ при всех $\theta  \in [-\pi , \pi ]$ выполнена оценка
\begin{equation}\label{ots_lu}
h_{u_{\mu}+\log |g|}(\theta ) <  \frac{12\pi (\pi+2) {\Delta}_{\mu}}
{{\pi}/2-\beta}\cdot \bigl(|\cos \theta | \ctg \beta + |\sin \theta |\tg \beta \bigr).
\end{equation}
\end{lemma} 
Для применения этой леммы \ref{l:vbpr} будем использовать поворот  плоскости $\CC$ 
на $\pi/2$ радиан <<против часовой стрелки>> с соответствующими заменами переменных для исходных функции $v$ и её меры Рисса $\nu$. По числу  $\e\in (0,1]$ выберем число $\beta \in (0,\pi/4]$ так, что $\tg \beta=\e$ и $\ctg \beta =1/\e$. По условию \eqref{c:nuvtp} <<повёрнутую>> меру $\nu$  можно представить в виде суммы мер $\nu :=\mu+\nu_0$, где мера $\mu:=\nu\bigm|_{U_{\beta}}$ удовлетворяет условию \eqref{ugoliR}, а $\nu_0\overset{\eqref{c:nuvtp}}{:=}\nu\bigm|_{\CC\setminus U_{\beta}}$ --- мера нулевого типа, или минимальной верхней плотности, при порядке $1$.  Если в обозначениях леммы \ref{l:vbpr} оказалось, что $\Delta_{\mu}=0$, то приходим к уже рассмотренному случаю $b=0$. Поэтому можем считать, что $0<\Delta_{\mu}\leq eb$. 
По лемме \ref{l:vbpr} найдётся целая функция $g_{\beta}\neq 0$, для которой выполнено  \eqref{ots_lu} с функцией $g_{\beta}$ вместо $g$. По доказанной части теоремы \ref{thmA}  с $b=0$ для  субгармонической функции $u_0:=v-u_{\mu}$ с мерой Рисса $\nu_0\in \mc M^+(\CC)$ нулевого типа найдётся 
целая функция $g_0\neq 0$, для которой $u_0+\log |g_0|$ --- субгармоническая функция 
 нулевого типа при порядке $1$. Для  функции $g:=g_{\beta}g_0\in \Hol_*(\CC)$ получаем требуемое 
 \eqref{hvge}, <<повёрнутое>> на $-\pi/2$.
\end{proof} 

\begin{remark} В теореме \ref{thmA} невозможно добиться ограниченности функции $u+\log |g|$ на $\RR$, так как в этом случае из субгармонического аналога легкой
необходимой части теоремы Берлинга\,--\,Мальявена о мультипликаторе функция $u\in \sbh_*(\CC) $ обязана быть функцией класса Картрайт \cite{MS}, \cite{BTKh}, , т.\,е.  должен сходиться интеграл $J_2^+[v]$ из  \eqref{J2+}, что строго сильнее  условия принадлежности функции $v$ классу A \cite[теорема 12]{Levin56}. 
\end{remark} 

Некоторое усиление \cite[следствие 3]{Kha01l} и теоремы A3 даёт 
 \begin{corollary}\label{corcomp} Пусть выполнены условия теоремы\/ {\rm A3}. Если 
 для любого $\beta \in (0,\pi/4]$ для пары вертикальных углов $U_{\beta}$
 из  \eqref{ang_e} сужение $\sf Z$ на $\CC \setminus U_{\beta}$ --- последовательность нулевой верхней плотности при порядке\/ $1$, то система $\Exp^{i\sf Z}$  не полна в $\Hol (\mathcal O)$. 
\end{corollary} 
Доказательство опускаем, поскольку оно выводится  из теоремы \ref{thmA} 
как \cite[доказательтво следствия 3]{Kha01l} и близк\'о 
к изложению  последних параграфов в \cite{Kha89}, \cite{kh91AA}
применительно к (полу)полосам. Часто употребляемые схемы для этого содержатся в \cite[1.1, 3.2]{Khsur}.

\end{document}